\DeclareMathOperator{\distance}{dist}
\newcommand{\dist}[2]{\distance\left(#1,#2\right)}  
\newcommand{\closure}[1]{\left[{#1}\right]} 
\newcommand{\subcube}[1]{H_{#1}} 
\newcommand{\intspanproj}{\text{internally spanned \projection}}
\newcommand{\intspanprojs}{\text{internally spanned \projections}}
\newcommand{\projection}{projection}
\newcommand{\projections}{projections}
\newcommand{\intspanset}[1]{\mathcal{H}_{#1}}
\newcommand{\initinfec}[1]{A_{#1}} 
\newcommand{\transpose}[1]{\ensuremath{#1^{\scriptscriptstyle T}}} 
\newcommand{\upboundcubeintspan}[1]{\Phi(#1)} 
\newcommand{\witquad}[0]{witnessing quadruple} 
\newcommand{\cand}[2]{\mathcal C(#1,#2)} 
\DeclareMathOperator{\dime}{dim}
\newcommand{\dimension}[1]{\dime \left(#1\right)}
\newcommand{\upboundcubeintspanconstant}[1]{c_{#1}} 
\newcommand{\optimalj}{j^*}
\newcommand{\probcubeintspan}[1]{\varphi({#1})}
\newcommand{\indicator}[1]{\mathbb{1}_{\set{#1}}}
\newcommand{\union}[6]{U_{#1,#2}\left(#3,#4,#5,#6\right)} 
\newcommand{\ffunc}[6]{f_{#1,#2}\left(#3,#4,#5,#6\right)} 
\newcommand{\ffuncred}[4]{f\left(#1,#2,#3,#4\right)}
\newcommand{\ffuncredj}[1]{F\left(#1\right)} 
\newcommand{\ffuncredi}[3]{\hat{F}_{#1,#2}\left(#3\right)}
\newcommand{\ffuncredl}[1]{\Bar{F}\left(#1\right)}
\newcommand{\indextriplesum}[2]{I_{#1,#2}}
\newcommand{\criticaldimension}{t_{*}} 
\newcommand{\triple}[2]{\hat I_{#1,#2}}
\newcommand{\vertexvec}[1]{#1}
\newcommand{\vertexvecn}[1]{\Vec{#1}}
\newcommand{\extset}[5]{\mathcal{E}\left(#1,#2,#3,#4,#5\right)} 
\newcommand{\extnum}[3]{E\left(#1,#2,#3\right)} 
\newcommand{\extnumindex}[5]{E_{#4}^{(#5)}\left(#1,#2,#3\right)} 
\newcommand{\criticalsizesequential}{\ell^*} 
\newcommand{\seqspannum}[1]{X_{#1}} 
\newcommand{\seqspanconst}[1]{C_{#1}} 
\newcommand{\feasible}[0]{sequentially spanning} 
\newcommand{\seq}[2]{\overline{#1}_{#2}} 
\newcommand{\seqsize}[0]{length}
\newcommand{\seqset}[1]{S_{#1}} 
\newcommand{\seqoverlap}[2]{Y_{#1}(#2)} 
\newcommand{\seqoverlaplastindexone}[1]{Y_{#1}^{(1)}} 
\newcommand{\seqoverlaplastindextwo}[1]{Y_{#1}^{(2)}} 
\newcommand{\seqsetwithvertex}[3]{\seqset{#1}(\vertexvec{#2})} 
\newcommand{\auxfunc}[3]{\Psi\left(#1,#2,#3\right)} 
\newcommand{\D}[1]{\Delta_{}} 
\newcommand{\constinlemma}[1]{D_{#1}} 
\newcommand{\specialindex}{j^*} 
\newcommand{\N}{\mathbb{N}}
\newcommand{\expp}[1]{\exp \left(#1\right)} 
\newcommand{\prob}[1]{\mathbb{P}\left[#1\right]} 
\newcommand{\expec}[1]{\mathbb{E}\left[#1\right]} 
\newcommand{\rounddown}[1]{\left\lfloor#1\right\rfloor} 
\newcommand{\roundup}[1]{\left\lceil#1\right\rceil} 
\newcommand{\setbuilder}[2]{\left\{#1 \mid #2\right\}} 
\newcommand{\set}[1]{\left\{#1 \right\}} 
\newcommand{\parens}[1]{\left( #1 \right)} 
\newcommand{\abs}[1]{\left|#1\right|} 
\newcommand{\minp}[2]{\min_{#1} \left\{ #2\right\}} 
\newcommand{\maxp}[2]{\max_{#1} \left\{ #2\right\}} 
\newcommand{\whp}{whp}
\newcommand{\Whp}{Whp}
\newcommand{\smallo}[1]{o\left(#1\right)}
\newcommand{\smallomega}[1]{\omega\left(#1\right)}
\newcommand{\Th}[1]{\Theta\left(#1\right)}
\newcommand{\mycomment}[1]{} 
\newcommand{\crefitemintheorem}[2]{\hyperref[#2]{\namecref{#1}~\labelcref*{#1}~\ref*{#2}}} 
\newtheorem{thm}{Theorem}[section]
\newtheorem{lem}[thm]{Lemma}
\newtheorem{rem}[thm]{Remark}
\newtheorem{definition}[thm]{Definition}
\newtheorem{cor}[thm]{Corollary}
\newtheorem{problem}[thm]{Problem}
\newtheorem{conj}[thm]{Conjecture}
\newtheorem{claim}[thm]{Claim}
\title{Bootstrap percolation on the high-dimensional Hamming graph}
\author{Mihyun Kang, Michael Missethan, Dominik Schmid}
\address{Institute of Discrete Mathematics, Graz University of Technology, Steyrergasse 30, 8010 Graz, Austria}
\email{\{kang,missethan,schmid\}@math.tugraz.at}
\keywords{}
\numberwithin{equation}{section}
\begin{document}
\begin{abstract}
In the random $r$-neighbour bootstrap percolation process on  a graph $G$, a set of initially infected vertices is chosen at random by retaining each vertex of $G$ independently with probability $p\in (0,1)$, and \lq healthy\rq\ vertices get infected in subsequent rounds if they have at least $r$ infected neighbours. A graph $G$ \emph{percolates} if every vertex becomes  eventually infected. A central problem in this process is to determine the critical probability $p_c(G,r)$, at which the probability that $G$ percolates passes through one half. In this paper, we study random $2$-neighbour bootstrap percolation on the $n$-dimensional Hamming graph $\square_{i=1}^n K_k$, which is the graph obtained by taking the Cartesian product of $n$ copies of the complete graph $K_k$ on $k$ vertices. We extend a result of Balogh and Bollob\'{a}s [Bootstrap percolation on the hypercube, Probab. Theory Related Fields. 134 (2006), no. 4, 624–648. MR2214907] about the asymptotic value of the critical probability $p_c(Q^n,2)$ for random $2$-neighbour bootstrap percolation on the $n$-dimensional hypercube $Q^n=\square_{i=1}^n K_2$ to the $n$-dimensional Hamming graph $\square_{i=1}^n K_k$, determining  the asymptotic value of $p_c\left(\square_{i=1}^n K_k,2\right)$, up to multiplicative constants (when $n \rightarrow \infty$), for arbitrary $k \in \mathbb N$ satisfying $2 \leq k\leq 2^{\sqrt{n}}$.
\end{abstract}
\maketitle

\section{Introduction and results}\label{sec:Intro&Main}
\subsection{Motivation}\label{subsec:Motivation}
Bootstrap percolation was introduced in 1979 by Chalupa, Leath, and Reich \cite{ChReLe1979} in the context of magnetic systems. Since then it has found many applications ranging from studying the spread of information through social networks \cite{KeKlTa2015} to modelling collective behaviour \cite{Granovetter1978}. For a survey of applications in physical science we refer the readers to \cite{AdLe2003}. 

Consider the $r$-neighbour bootstrap percolation process described as follows. Given a graph $G$, a set $\initinfec{0} \subseteq V(G)$ of \emph{initially infected} vertices and the so-called \emph{infection parameter} $r \in \mathbb N$, a \emph{healthy} vertex gets infected if it has at least $r$ infected neighbours. Once a vertex has been infected, it remains in this state forever. The process ends if either all vertices have been infected or all healthy vertices have less than $r$ infected neighbours. The set of infected vertices can be thought of growing in rounds: In each round $j\in \mathbb{N}$, we move all healthy vertices to the set $\initinfec{j}$ that have at least $r$ neighbours in $\initinfec{j-1}$, i.e., we let
\begin{align}\label{eq:def:bootstraprounds}
    \initinfec{j}: = \initinfec{j-1} \cup \setbuilder{v \in V(G)}{\left|N(v) \cap \initinfec{j-1}\right| \geq r}.
\end{align}
Note that the percolation process stops (or stabilises) as soon as $\initinfec{j} = \initinfec{j-1}$ for some $j\in \N$. We set 
\begin{align*}
    \closure{A} \coloneqq \bigcup_{j \in \N \cup \{0\}} \initinfec{j}.
\end{align*}
If  $\closure{\initinfec{}} = V(G)$, we say $\initinfec{}$ \emph{spans} $G$; or equivalently $G$ \emph{percolates}; or equivalently   $\initinfec{}$ is a percolating set. 

Choosing the set of initially infected vertices {\em deterministically} leads to interesting extremal questions, e.g., finding a smallest percolating set \cite{BaPe1998,BeKeNo2022,DuNoRo2023,MoNo2018,PrSh2020} or determining the longest time a graph can take to percolate \cite{BePr2015,HaLi2024,Przykucki2012}. Yet, the probably most-studied case is when the set of initially infected vertices is chosen {\em randomly} in such a way that each vertex is initially infected with probability $p \in (0,1)$ independently of the other vertices. A typical question in this context is above which \emph{critical probability} $p_c$ a graph $G$ is likely to percolate. More formally, in so-called \emph{random} $r$-neighbour bootstrap percolation on a graph $G$, the set of initially infected vertices  is a $p$-\emph{random} subset $\boldsymbol{\initinfec{}}_p$ of $V(G)$, which is obtained by retaining each vertex in $V(G)$  independently with probability $p$. We define the \emph{critical probability}  as 
\begin{align*}
    p_c(G, r) \coloneqq \inf \setbuilder{p \in (0,1)}{ \prob{\boldsymbol{\initinfec{}}_p \text{ spans } G }\geq 1/2 }.
\end{align*} 
We aim to determine the asymptotic value of  the critical probability or to bound the width of the \lq critical window\rq\, where the probability that $G$ percolates changes from tending to zero to tending to one as $n$ tends to infinity. 
One of the first results on random $r$-neighbour bootstrap percolation was obtained by Schonmann \cite{Schonmann1992}, who showed that $p_c\parens{\mathbb{Z}^d,r} = 0$ if $r \leq d$ and $p_c\parens{\mathbb{Z}^d,r} = 1$ otherwise. Aizenmann and Lebowitz\cite{AiLe1988} determined $p_c\parens{[n]^d,2}$ up to a constant factor. In subsequent papers, the percolation threshold on $[n]^d$ was extended to different sets of parameters $r,d$ and improved to a sharp threshold  \cite{BaBoDCMo2012,BaBoMo2010,BaBoMo2009,CeCi1999,CeMa2002,Ho2003}. Random $r$-neighbour bootstrap percolation was also studied for other base graphs, e.g., the infinite lattice \cite{GrHo2008,HaTe2024,Ho2003,Schonmann1992},   the binomial random graph $G(n,p)$\cite{AnKo2021,JaLuToVa2012,KaMa2016,Scalia1985},  or various types of trees \cite{BaPePe2006,BiSc2009,BoGuHoJaPr2014,BrSa2015,GuPr2014,Shapira2019}. 

In this paper, we consider \emph{random} $2$-neighbour bootstrap percolation on the high-dimensional \textit{Hamming graph}. For each $n,k\in \mathbb N$ with $k\ge 2$ let $\square_{i=1}^n K_k$ denote the $n$-dimensional Hamming graph which is obtained by taking the $n$-fold Cartesian product of the complete graph $K_k$ -- the formal definition of the Cartesian product of graphs can be found in \Cref{def:Cartesianproduct}. Note that the special case $k=2$ yields the well-studied $n$-dimensional hypercube $Q^n=\{0,1\}^n=\mathbb Z_2^n =[2]^n$, in other words, $Q^n=\square_{i=1}^n K_2$.  When the dimension $n$ is \emph{constant} and the size $k$ of the base graph $K_k$ tends to infinity, random $r$-neighbour bootstrap percolation on $\square_{i=1}^n K_k$ is well studied, e.g., by Gravner, Hoffman, Pfeiffer, and Sivakoff \cite{GrHoPfSi2015}and by Slivken \cite{Sl2017}. However, the case when $n$ tends to infinity and $k\ge 2$ is arbitrary is relatively unknown except for the special case $k=2$. Regarding  random $2$-neighbour bootstrap percolation on $Q^n=\square_{i=1}^n K_2$, Balogh and Bollob\'{a}s\cite{BaBo2006} determined the asymptotic value of the critical probability  up to multiplicative constants:
\begin{align}\label{eq:BaBoHypercuberesult}
    p_c\parens{Q^n,2} = \Th{n^{-2} 2^{-2\sqrt{n}}}. 
\end{align} 
In this paper we generalise the result \eqref{eq:BaBoHypercuberesult} of Balogh and Bollob\'{a}s\cite{BaBo2006} to the $n$-dimensional  Hamming graph $\square_{i=1}^n K_k$ when $n$ tends to infinity \footnote{Throughout the paper, all asymptotics are considered as $n \to \infty$ and we say an event $\mathcal{E}$ holds whp (with high
probability) if the probability of $\mathcal{E}$ tends to one as $n \to \infty$.}. Our results allow the size $k$ of the base graphs $K_k$ to be an arbitrarily large constant and we even allow $k=k(n)$ to be a function in $n$ that grows at most exponentially in $\sqrt{n}$.

\subsection{Main results and key proof techniques}\label{subsec:Main}
For $n,k\in \mathbb N$ with $k\ge 2$, consider  \emph{random} $2$-neighbour bootstrap percolation on the $n$-dimensional Hamming graph $\square_{i=1}^n K_k$, where the set of initially infected vertices is a $p$-random subset $\boldsymbol{\initinfec{}}_p$ of $V\left(\square_{i=1}^n K_k\right)$, meaning $\prob{v \in \boldsymbol{\initinfec{}}_p} = p \in (0,1)$  independently for each vertex $v \in V\left(\square_{i=1}^n K_k\right)$. Set $A_0 := \boldsymbol{\initinfec{}}_p$ and for each $j \in \mathbb N$ let 
\begin{align*}
    \initinfec{j} \coloneqq \initinfec{j-1} \cup \setbuilder{v \in V\left(\square_{i=1}^n K_k\right)}{\left|N(v) \cap \initinfec{j-1}\right| \geq 2} \quad \text{ and } \quad \closure{\boldsymbol{\initinfec{}}_p} := \bigcup_{j \in \N \cup \set{0}}\initinfec{j}.
\end{align*}
Our main result is that for {\em any} $2 \leq k \leq 2^{\sqrt{n}}$ the critical probability satisfies 
$$p_c\left(\square_{i=1}^n K_k, 2\right) = \Th{ n^{-2}k^{-2\sqrt{n}+1}},$$ 
which recovers \eqref{eq:BaBoHypercuberesult} of Balogh and Bollob\'{a}s \cite{BaBo2006} by taking $k=2$. 
More precisely, we determine for which $p$ does $\square_{i=1}^n K_k$ percolate, i.e., $\closure{\boldsymbol{\initinfec{}}_p} = V\left(\square_{i=1}^n K_k\right)$ and for which $p$ it does not, i.e., $V\left(\square_{i=1}^n K_k\right)\setminus \closure{\boldsymbol{\initinfec{}}_p}  \neq \emptyset.$ 
\begin{thm}\label{thm:main:threshold}
Let $n,k\in \mathbb N$ satisfy $2 \leq k \leq 2^{\sqrt{n}}$. Consider random $2$-neighbour bootstrap percolation on the $n$-dimensional Hamming graph $G=\square_{i=1}^n K_k$ in which the set of initially infected vertices  is a $p$-random subset $\boldsymbol{\initinfec{}}_p$ of $V(G)$. Set $p_* := n^{-2} k^{-2\sqrt{n}+1}$ and $ p^* := 200 n^{-2}k^{-2\sqrt{n}+1}$. Then the following hold.
    \begin{enumerate}[label = (\alph*)]
        \item If $p \leq p_*$, then \whp{} $G$ does not percolate.\label{thm:main:threshold:lower}
        \item If $p \geq p^*$, then \whp{} $G$ percolates.\label{thm:main:threshold:upper}
    \end{enumerate}
\end{thm}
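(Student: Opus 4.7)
The plan is to follow the general blueprint introduced by Balogh and Bollob\'{a}s for $Q^n$~\cite{BaBo2006}, suitably adapted to the Hamming graph $G=\square_{i=1}^nK_k$. Throughout the argument the central objects are \emph{internally spanned sub-cubes}: a copy $\subcube{d}\cong\square_{i=1}^dK_k$ obtained by fixing $n-d$ of the coordinates, such that $\closure{\boldsymbol{\initinfec{}}_p\cap V(\subcube{d})}=V(\subcube{d})$ when the $2$-neighbour process is restricted to $\subcube{d}$. Two product-structure features drive everything: any two initially infected vertices that share a $K_k$-fibre immediately infect the remaining $k-2$ vertices of that fibre (they are common neighbours of each of them), and the number of $d$-dimensional sub-cubes of $G$ equals $\binom{n}{d}k^{n-d}$. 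Balancing this count against $\probcubeintspan{d}$, the probability that a fixed $d$-dimensional sub-cube is internally spanned, pins the critical dimension at $d\sim\sqrt{n}$ and fixes the exponent $k^{-2\sqrt{n}+1}$ appearing in $p_*$ and $p^*$.

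For part~(a) with $p\le p_*$, the first step is an Aizenman--Lebowitz-type dichotomy inside $G$: whenever $\closure{\boldsymbol{\initinfec{}}_p}=V(G)$ there exists, at every scale, an internally spanned connected set of infected vertices of that scale, and any such set is confined to a sub-cube whose dimension is controlled by its diameter. Choosing the scale appropriately produces an internally spanned sub-cube of dimension $d$ of order $\sqrt{n}$. The second step is to upper-bound $\probcubeintspan{d}$, essentially by $\bigo{(k^dp)^{(d+1)/2}}$, reflecting that spanning a $d$-dimensional sub-cube requires of order $d$ seeds placed in compatible positions. A union bound over the $\binom{n}{d}k^{n-d}$ sub-cubes of each dimension, optimised at $d\sim\sqrt{n}$, then shows that at $p=p_*$ the expected number of internally spanned sub-cubes vanishes, so $G$ fails to percolate~\whp.

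For part~(b) with $p\ge p^*$ the strategy is \emph{seed and grow}. In the seeding step we establish the matching lower bound $\probcubeintspan{d}\ge c(k^dp)^{(d+1)/2}$ together with a second-moment estimate on the number of internally spanned sub-cubes of dimension $d=\roundup{\sqrt{n}}$; the factor $200$ in $p^*$ is chosen so that the expected number of such sub-cubes is large, its variance sufficiently controlled, and hence \whp{} at least one internally spanned sub-cube exists. In the growth step we start from a fully infected $\subcube{d}$ and enlarge it one coordinate at a time: for each of the $n-d$ external directions, \whp{} some $K_k$-fibre meeting $\subcube{d}$ contains an initially infected vertex off $\subcube{d}$; the fibre-filling observation above then fills that fibre, after which a short internal argument fills the entire $(d+1)$-dimensional sub-cube. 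Iterating from $d\sim\sqrt{n}$ up to $d=n$ completes the proof. The principal technical obstacle will be the tight control of $\probcubeintspan{d}$ uniformly in the range $2\le k\le 2^{\sqrt{n}}$: in contrast to the hypercube case, in $\square_{i=1}^dK_k$ a single initially infected vertex does not fill its $K_k$-fibre, so the natural recursion for $\probcubeintspan{d}$ must track pairs of fibre-mates rather than singletons, and it is this recursion that ultimately pins down the polynomial pre-factor $n^{-2}$.
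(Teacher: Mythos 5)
Your overall blueprint — an Aizenman--Lebowitz/hierarchy argument combined with the van den Berg--Kesten inequality for part (a), and a second-moment seeding argument for part (b) — is the right family of ideas, and for the lower bound it matches the paper's witnessing-quadruple approach in spirit. The problem is the quantitative heart of the proposal, which as stated breaks both halves. The bound $\probcubeintspan{d}=\bigo{(k^dp)^{(d+1)/2}}$ implicitly lets each of the $\sim d/2$ seeds sit anywhere in the sub-cube, contributing $k^{d(d+1)/2}\approx k^{d^2/2}$ placements; the correct count of spanning configurations is governed by \emph{sequentially spanning} sets, in which every new seed must lie at distance exactly two from the projection spanned by the previous seeds, and is of order $d!\,2^{-d/2}(k-1)^d k^{d^2/4}$ (compare \Cref{lem:probintspanupperbound} and \Cref{lem:seqspancount}). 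With your bound, the union bound $\binom{n}{d}k^{n-d}\probcubeintspan{d}$ at $p=p_*$ has $k$-exponent roughly $n+d^2/2-d\sqrt n$, which is minimised near $d\sim\sqrt{2n}$ and is still about $k^{n/2}$ there, so it does not vanish; indeed your formula is internally inconsistent, as it would predict a threshold of order $n^{-2}k^{-2\sqrt{2n}}$ rather than $n^{-2}k^{-2\sqrt n+1}$. With the correct exponent $k^{d^2/4}$ the balance occurs at the critical dimension $\criticaldimension=\rounddown{2\sqrt n}-2$, i.e.\ $d\sim 2\sqrt n$, not $d\sim\sqrt n$.

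The misidentified critical dimension is fatal for your part (b). Starting the growth from an internally spanned $\subcube{d}$ with $d=\roundup{\sqrt n}$, the expected number of initially infected vertices within distance two of $\subcube{d}$ is $\bigo{n^2k^{d+2}p^*}=\bigo{k^{d+3-2\sqrt n}}=\smallo{1}$ (and your fibre version, which uses only distance-one vertices, gives the even smaller $\bigo{k^{d+1}p^*}$), so \whp{} the growth stalls at the very first step; a seed-and-grow argument can only start from dimension at least roughly $2\sqrt n$, and the extension steps must exploit vertices at distance \emph{two}, whose count carries the factor $\binom{n-d}{2}\approx n^2/2$ that is actually responsible for the $n^{-2}$ in $p_c$ (not pairs of fibre-mates). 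The paper avoids growth altogether: it runs the second moment method directly on infected sequentially spanning \emph{sequences} of length $\rounddown{n/2}$, which span a projection of dimension $2\rounddown{n/2}$, i.e.\ all of $G$ up to a one-line patch for odd $n$; the bottleneck of that computation sits at sub-sequences of length $\rounddown{\sqrt n}-1$, whose closures are exactly the $(\rounddown{2\sqrt n}-2)$-dimensional critical droplets. Your plan could be repaired, but only after replacing the placement count $k^{d^2/2}$ by $k^{d^2/4}$ and raising the seed dimension to about $2\sqrt n$.
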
 

Note that Theorem \ref{thm:main:threshold} implies
\begin{align*}
    n^{-2}k^{-2\sqrt{n}+1} \leq p_c\left(\square_{i=1}^n K_k, 2\right) \leq 200 n^{-2}k^{-2\sqrt{n}+1}
\end{align*}
and determines a threshold for the property that $G$ percolates, up to  multiplicative constant factors. Regarding the case $k=2$, improving upon \eqref{eq:BaBoHypercuberesult} Balogh, Bollob\'{a}s and Morris\cite{BaBoMo2010} determined a sharp threshold (and more):  
\begin{align}\label{eq:BaBoHypercuberesult_sharp}
 \frac{16\lambda}{n^2}\parens{1+\frac{\log n}{\sqrt{n}}}2^{-2\sqrt{n}} \leq p_c(Q^n,2) \leq \frac{16\lambda}{n^2}\parens{1+\frac{ 5 \log^2 n}{\sqrt{n}}}2^{-2\sqrt{n}},
\end{align} where $\lambda \approx 1.166$. 
Although the constants in \Cref{thm:main:threshold} could be improved (especially for $p^*$), different techniques are needed to improve our result to  a sharp threshold. Furthermore, note that the restriction $k \leq 2^{\sqrt{n}}$ on the size of the base graphs $K_k$ is due to technical reasons in the proof of \crefitemintheorem{thm:main:threshold}{thm:main:threshold:lower} and we did not try to optimise this restriction, although the result might still hold for $k$ growing arbitrarily fast in $n$. 
 
The advantages of our proof techniques lie in their universality and applicability to more general classes of high-dimensional product graphs. 
Two key concepts in the standard analysis of bootstrap percolation on high-dimensional graphs are  {\em internally spanned \projections{}} and a {\em critical dimension}. Given a Cartesian product graph $G$,  a \projection{} of $G$ is a subgraph of $G$ that is isomorphic to a {\em lower-dimensional} Cartesian product graph; and a \projection{} is {\em internally spanned} if its induced subgraph percolates. One of the key properties of the Hamming graph is that two \projections{} within distance two together span another \projection{} containing both of them. This enables us to use  {\em \lq hierarchies\rq} to obtain good upper bounds on the probability that a \projection{} is internally spanned and to show that the probability of $G$ percolating is bounded from above by the probability of the appearance of an internally spanned \projection{} of a certain \emph{critical dimension}. To prove \crefitemintheorem{thm:main:threshold}{thm:main:threshold:lower} it then suffices to show that for $p\leq p_*$ there exists no internally spanned \projection{} of this \emph{critical dimension}. We note that in earlier work \cite{BaBo2006,BaBoMo2010} as well as in our work the property that two \projections{} within distance two always span a larger \projection{} was crucial. 

To prove \crefitemintheorem{thm:main:threshold}{thm:main:threshold:upper} we consider a specific type of percolating set, called a {\em \feasible{} set} (see \Cref{def:sequentiallyspanning} for formal definition). The importance of sequentially spanning sets is two-fold: They are {\em minimum} percolating sets; and the majority of all minimum percolating sets comes from sequentially spanning sets. We use the second moment method to show the existence of a sufficiently large \feasible{} set. In comparison to the earlier work  of Balogh and Bollob\'{a}s \cite{BaBo2006} and Balogh, Bollob\'{a}s and Morris\cite{BaBoMo2010}, we incorporate the {\em ordering} of the vertices in  a sequentially spanning set and consider a sequentially spanning {\em sequence} instead (see \Cref{def:seqspanningsequences} for formal definition), which has certain advantages: On the one hand, the correlation between two \feasible{} sequences can be computed quite explicitly, allowing for much more generous applications of the second moment method; and on the other hand, while \feasible{} sets are used in literature in order to derive lower bounds on the probability that a \projection{} is internally spanned, the use of \feasible{} {\em sequences} allows us to avoid this step. 
The concept of \feasible{} sequences transfers quite naturally to other types of Cartesian product graphs, and so we believe that our techniques will be useful for deriving similar bounds on more general classes of product graphs.

\subsection{Outline of the paper}\label{subsec:Outline}
In \Cref{sec:Preliminaries} we introduce notations and concepts needed for  bootstrap percolation on the Hamming graph. 
\Cref{sec:LowerBound} is dedicated to the proof of \crefitemintheorem{thm:main:threshold}{thm:main:threshold:lower}, and  more technical proofs of auxiliary results of \Cref{sec:LowerBound}   are deferred to \Cref{sec:proof:lower}. 
Similarly, we prove \crefitemintheorem{thm:main:threshold}{thm:main:threshold:upper} in \Cref{sec:UpperBound} and other technical results in \Cref{sec:proof:upper}. 
We discuss open problems and future research directions in \Cref{sec:Discussion}. 

\section{Preliminaries}\label{sec:Preliminaries}
In this section we first introduce necessary notations and useful known results. We then provide some intuition on the deterministic $2$-neighbour bootstrap percolation process on the Hamming graph  and characterise percolating or non-percolating sets. In \Cref{subsec:bootstrapprozess}, we take a closer look at the 
percolation process and set up the scene for what is known as the \emph{hierarchy} technique, which was introduced by Holroyd \cite{Ho2003} and since then has become a fundamental tool to derive lower bounds on bootstrap percolation thresholds. 

\subsection{Notation and terminology}\label{subsec:Notation}
We define $\N_0 \coloneqq \N\cup \set{0}$. Given a set $X$ and $\ell \in \N$, we denote by $\binom{X}{\ell}$ the set of all $\ell$-element subsets of $X$ and define $[\ell]:=\{1,2,\ldots, \ell\}$. 

Given a graph $G$, the \emph{distance} between two different vertices in $G$, denoted by $\dist{x}{y}$,  is defined as the length of a shortest path between them. The distance between two sets of vertices $A,A'\subseteq V(G)$ is defined as the minimum distance between all pairs of vertices from the two sets:
\begin{align*}
  \dist{A}{A'} =  \dist{A'}{A} \coloneqq \min_{u \in A} \min_{v \in A'} \dist{u}{v}.  
\end{align*}

Now we define the Cartesian product of graphs and introduce important concepts. 
\begin{definition}\label{def:Cartesianproduct}
Given graphs $G^{(1)}=\left(V^{(1)}, E^{(1)}\right), \ldots, G^{(n)}=\left(V^{(n)}, E^{(n)}\right)$, we define the {\bf Cartesian product} of $G^{(1)}, \ldots, G^{(n)}$, denoted by $G=G^{(1)}\square\cdots\square G^{(n)}$ or $G=\square_{i=1}^{n}G^{(i)}$, as the graph $G=(V,E)$ with vertex and edge set
\begin{align*}
V &=V(G) = V^{(1)}\times \cdots \times V^{(n)}:=\setbuilder{\vertexvec{v} = (v_1, \ldots, v_n)}{\forall i\in [n]: v_i\in V^{(i)}}\\
E &= E(G) := \setbuilder{\{\vertexvec{v},\vertexvec{w}\} \in \binom{V}{2} }{\exists j\in[n]: \Big(\set{v_j,w_j} \in E^{(j)} \Big) \wedge \Big(\forall i\in[n]\setminus \{j\}: v_i=w_i\Big)}.
\end{align*}
\end{definition}
Throughout the paper we will call $G=\square_{i=1}^{n}G^{(i)}$ the {\em product graph} instead of the Cartesian product graph, unless specified otherwise, and the graphs $G^{(1)}, \ldots, G^{(n)}$ the \emph{base graphs} of $G$.  A base graph is called \emph{trivial}, if it consists of a single vertex.

\begin{definition}\label{def:projection}
Given a product graph $G=\square_{i=1}^{n}G^{(i)}$ we call a subgraph $H\subseteq G$ a {\bf \projection{}} of $G$ if $\subcube{}$ can be written as $\subcube{}=\square_{i=1}^{n}\subcube{}^{(i)}$ where for each $i\in[n]$ we have either $\subcube{}^{(i)}=G^{(i)}$ or $\subcube{}^{(i)}=\left\{v_i\right\}$ for some vertex $v_i\in V^{(i)}$. The {\bf dimension} of $H$  is defined as the number of non-trivial base graphs $\subcube{}^{(i)}$ in $H$.
\end{definition}
In order to keep track of the dimension of a \projection{}, we will often write $\subcube{\ell}$ for a projection of dimension $\ell \in \N_0$. With slight abuse of notation, we will sometimes view a \projection{} not as a subgraph of $G$, but just as a subset of $V(G)$. Similarly, when we say a subset  $U\subseteq V(G)$ is a \projection{},  we mean that the subgraph of $G$ induced by $U$ is a \projection{}. 

Next we introduce some necessary notations for the bootstrap percolation process. 
\begin{definition}\label{def:closure}
    Given a graph $G=(V,E)$ and a set $A \subseteq V$, we define the \textbf{closure} of $A$, denoted by $\closure{A}$, as the set of eventually infected vertices in the $2$-neighbour bootstrap percolation process with $A$ being the set of initially infected vertices. More formally, we define
    \begin{align*}
        \closure{A} \coloneqq \bigcup_{j\in \N_0} A_j,
    \end{align*} where $A_0:= A$ and $A_j := A_{j-1} \cup \setbuilder{v \in V}{\left|N(v) \cap A_{j-1}\right| \geq 2}$ for $j \in \N$.
We say 
\begin{enumerate}
    \item[(1)] $A $ is \textbf{closed}, if $ \closure{A} = A $;
    \item[(2)] a closed set  $U\subseteq V$ is called \textbf{internally spanned} by  $\initinfec{}$, if $U = \closure{\initinfec{} \cap U}$.
\end{enumerate}
The collection of all closed subsets of $V$ that are internally spanned by $\initinfec{}$ is denoted by 
    \begin{align*}
        \intspanset{\initinfec{}} \coloneqq \setbuilder{U \subseteq V}{U = \closure{\initinfec{} \cap U}}.
    \end{align*}
Furthermore we say  
\begin{enumerate}
    \item[(3)] two (not-necessarily disjoint) closed sets $U,W\subseteq V$ are \textbf{disjointly internally spanned} by $\initinfec{}$, if there exist two disjoint subsets $\initinfec{U} ,\initinfec{W} \subseteq \initinfec{}$ that internally span $U$ and $W$, respectively,  i.e., $U = \closure{\initinfec{U} \cap U}$ and $W = \closure{\initinfec{W} \cap W}$.
\end{enumerate} 
In this case, we write 
    \begin{align*}
        \{U,W\} \subseteq_{d} \intspanset{\initinfec{}}.
    \end{align*} 
Analogously  for arbitrary numbers of sets $U_1,\ldots, U_{\ell}\subseteq V$ we say they are  disjointly internally spanned by $A$, if  there exist disjoint subsets $\initinfec{U_1} ,\ldots, \initinfec{U_{\ell}} \subseteq \initinfec{}$ that internally span $U_1,\ldots, U_{\ell}$, respectively.
\end{definition}
A powerful tool in our analysis is the van den Berg-Kesten Lemma. Essentially in the context of bootstrap percolation it implies the following. 

\begin{lem}[{\cite[Theorem 1.6.(iii)]{VaKe1985}}]\label{lem:vandenbergkesten}
    Given a graph $G=(V,E)$ and $p \in (0,1)$ let $\boldsymbol{\initinfec{}}_p$ be a $p$-random subset of $V$, i.e., $\prob{v \in \boldsymbol{\initinfec{}}_p} = p$ for each $v \in V$ independently. For any $U,W \subseteq V$, we have
    \begin{align*}
        \prob{\{U,W\} \subseteq_{d} \intspanset{\boldsymbol{\initinfec{}}_p} } \leq \prob{U \in \intspanset{\boldsymbol{\initinfec{}}_p}} \cdot \prob{W \in \intspanset{\boldsymbol{\initinfec{}}_p}}.
    \end{align*}
\end{lem}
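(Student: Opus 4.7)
The plan is to reduce this to the classical van den Berg--Kesten inequality of \cite{VaKe1985}, exactly as the citation in the lemma statement suggests. Setting $E_U := \{U \in \intspanset{\boldsymbol{\initinfec{}}_p}\}$ and $E_W := \{W \in \intspanset{\boldsymbol{\initinfec{}}_p}\}$, the task is to verify (i) that both $E_U$ and $E_W$ are \emph{increasing} events in the product Bernoulli space $\{0,1\}^V$, and (ii) that the event $\{\{U,W\} \subseteq_d \intspanset{\boldsymbol{\initinfec{}}_p}\}$ coincides with the BK disjoint occurrence $E_U \square E_W$. Given (i) and (ii), the BK inequality gives $\prob{E_U \square E_W} \leq \prob{E_U}\prob{E_W}$, which is the claim.

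For (i), the key observation is that whenever $E_U$ holds for some configuration $A$, one has $U = \closure{A\cap U}$, and in particular $U$ is automatically closed. By \Cref{def:closure} this means every vertex of $V\setminus U$ has at most one neighbour in $U$, so the bootstrap process started from any subset of $U$ can never escape $U$. Hence for any $A' \supseteq A$ one has $\closure{A'\cap U}\subseteq U$; combining with the trivial monotonicity $U = \closure{A\cap U}\subseteq \closure{A'\cap U}$ gives $\closure{A'\cap U} = U$, so $E_U$ is increasing. The argument for $E_W$ is identical.

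For (ii), a witness of $E_U$ in the BK sense is a set of coordinates whose being set to $1$ forces $E_U$ regardless of the remaining coordinates. Since $E_U$ depends on $\boldsymbol{\initinfec{}}_p$ only through $\boldsymbol{\initinfec{}}_p \cap U$, any such witness may be replaced by its intersection with $U$ without loss of witnessing power. Consequently $E_U \square E_W$ is precisely the event that there exist disjoint subsets $\initinfec{U}\subseteq \boldsymbol{\initinfec{}}_p \cap U$ and $\initinfec{W}\subseteq \boldsymbol{\initinfec{}}_p \cap W$ with $U = \closure{\initinfec{U}}$ and $W = \closure{\initinfec{W}}$, which matches \Cref{def:closure}(3) verbatim. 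The mildly subtle step -- and in my view the only real obstacle -- is this identification: that the paper's definition of disjointly internally spanned (disjoint witnesses sitting inside $\boldsymbol{\initinfec{}}_p$, with no a priori restriction to $U$ or $W$) is the same as the BK notion of disjoint occurrence on the coordinate sets $U$ and $W$. Once this locality argument is recorded, the conclusion follows at once by invoking BK.
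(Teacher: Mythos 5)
Your proposal is correct and matches the paper's approach: the paper does not prove this lemma but simply invokes \cite[Theorem~1.6.(iii)]{VaKe1985}, and your argument supplies exactly the verification that is left implicit there — that $\{U\in\intspanset{\boldsymbol{\initinfec{}}_p}\}$ is an increasing event (via the closedness of $U=\closure{\boldsymbol{\initinfec{}}_p\cap U}$) and that the paper's notion of disjoint internal spanning coincides with the BK disjoint occurrence of the two events.
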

We will use the following version of the second moment method, due to Alon and Spencer \cite{AlSp2008}.
\begin{lem}[\cite{AlSp2008}, Corollary 4.3.4]\label{lem:secondmoment}
    Let $X = \sum_{i=1}^n X_i$ be a sum of indicator random variables for the events $(\mathcal E_i)_{i=1}^n$. For indices $i,j \in [n]$ write $i \sim j$, if the events $\mathcal E_i$ and $\mathcal E_j$ are dependent. If 
    \begin{align*}
        \expec{X} \rightarrow \infty \quad \text{ and } \quad \sum_{i \sim j} \prob{\mathcal E_i \cap \mathcal E_j} = \smallo{\expec{X}^2} \quad \text{ as } n\rightarrow \infty, 
    \end{align*}
    then $\prob{X \geq 1} \rightarrow 1$ as $n \rightarrow \infty$.
\end{lem}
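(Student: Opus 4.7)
The plan is to apply Chebyshev's inequality in the standard form $\prob{X=0} \leq \variance{X}/\expec{X}^2$, which forces $\prob{X\geq 1}\to 1$ as soon as $\variance{X}=\smallo{\expec{X}^2}$. The whole task therefore reduces to bounding $\variance{X}$ by the sum $\sum_{i\sim j}\prob{\mathcal{E}_i\cap\mathcal{E}_j}$ that is controlled in the hypothesis.

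To do that, I would expand $\variance{X}=\sum_{i,j\in[n]}\bigl(\prob{\mathcal{E}_i\cap\mathcal{E}_j}-\prob{\mathcal{E}_i}\prob{\mathcal{E}_j}\bigr)$ and split the double sum according to whether the events $\mathcal{E}_i$ and $\mathcal{E}_j$ are independent or dependent. Independent pairs contribute zero. For each dependent pair (including the diagonal $i=j$, under the convention that $i\sim i$) the non-negativity of $\prob{\mathcal{E}_i}\prob{\mathcal{E}_j}$ yields the crude but sufficient bound $\prob{\mathcal{E}_i\cap\mathcal{E}_j}-\prob{\mathcal{E}_i}\prob{\mathcal{E}_j}\leq \prob{\mathcal{E}_i\cap\mathcal{E}_j}$. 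Summing over the dependent pairs then gives $\variance{X}\leq \sum_{i\sim j}\prob{\mathcal{E}_i\cap\mathcal{E}_j}$.

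Combining the two steps, the hypothesis $\sum_{i\sim j}\prob{\mathcal{E}_i\cap\mathcal{E}_j}=\smallo{\expec{X}^2}$ gives $\variance{X}=\smallo{\expec{X}^2}$, and Chebyshev's inequality finishes the argument. Since this is the textbook second moment method I do not expect any genuine obstacle; the only mildly fiddly point is how to treat the diagonal $i=j$ inside the sum $\sum_{i\sim j}$. Under the convention $i\sim i$ it is absorbed automatically, and otherwise one observes separately that $\sum_i \variance{X_i}\leq \expec{X}=\smallo{\expec{X}^2}$, where the last step uses the hypothesis $\expec{X}\to\infty$. In either case the bound on $\variance{X}$ is the same and the proof concludes.
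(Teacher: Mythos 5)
Your proof is correct: the paper does not prove this lemma but cites it directly from Alon and Spencer, and your argument (Chebyshev, variance expansion, covariance vanishing for independent pairs, the crude bound $\covariance{X_i}{X_j}\leq\prob{\mathcal E_i\cap\mathcal E_j}$ for dependent pairs, and the diagonal absorbed via $\expec{X}=\smallo{\expec{X}^2}$) is exactly the standard proof given in that reference.
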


\subsection{Deterministic bootstrap percolation on the Hamming graph}\label{subsec:determ.bootstrap}
In this section we collect properties of closed sets in the Hamming graph and how they interact with each other. Note that the results presented in this section are already known and can be found e.g., in \cite{Sl2017}, but for the sake of completeness we include their proofs in  Appendix A. 
Throughout this section and the next section we consider the $n$-dimensional Hamming graph 
$$G = (V,E) = \square_{i=1}^n K_k,$$ 
where $n,k\in \mathbb N$ satisfying $k\ge 2$.
We start with a simple but important observation about projections.

\begin{lem}\label{lem:projclosed}
Any projection of $G$ is closed.   
\end{lem}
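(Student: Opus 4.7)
The plan is to argue directly from the definitions: to show that a projection $H$ is closed under the $2$-neighbour percolation process, it suffices to check that every vertex outside $V(H)$ has at most one neighbor inside $V(H)$. Once this local property is established, the $2$-neighbour rule cannot propagate infection out of $H$, so $\closure{V(H)}=V(H)$.

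First, I would partition the coordinate set $[n]$ into $T=\set{i\in[n]: H^{(i)}=K_k}$ and $S=[n]\setminus T$, so that each base graph indexed by $S$ collapses to a single vertex $h_i$. Then $V(H)$ is precisely the set of those $\vertexvec{w}\in V(G)$ satisfying $w_i=h_i$ for every $i\in S$, which gives a clean characterization to work with.

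Next, I would fix a vertex $\vertexvec{v}\notin V(H)$, let $D:=\set{i\in S: v_i\neq h_i}$ (necessarily nonempty), and count its neighbors inside $V(H)$ by conditioning on the unique coordinate $j$ where a neighbor $\vertexvec{u}$ differs from $\vertexvec{v}$. The case $j\in T$ leaves all coordinates in $S$ fixed, and so requiring $\vertexvec{u}\in V(H)$ is incompatible with $D\neq\emptyset$. The case $j\in S$ forces $v_i=h_i$ for every $i\in S\setminus\set{j}$ and $u_j=h_j$, which is only possible when $D=\set{j}$ and then uniquely determines $\vertexvec{u}$. In either case $\vertexvec{v}$ has at most one neighbor in $V(H)$, which closes the argument.

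I do not anticipate a real obstacle: the claim is a structural consequence of the product definition, and the only work is a careful bookkeeping of the two cases for $j$. The key is to keep the coordinate-wise description of adjacency in $\square_{i=1}^n K_k$ front and center, so that membership in $V(H)$ translates directly into fixing certain coordinates, and the bound ``at most one neighbour outside'' becomes immediate.
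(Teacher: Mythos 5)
Your argument is correct and is essentially the paper's proof in a slightly different packaging: both reduce the claim to showing that any vertex outside the projection has at most one neighbour inside it, using the coordinate-wise description of adjacency (the paper phrases this as a contradiction, showing two such neighbours would have to coincide). No gaps.
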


Next we state the most important result in this section, which tells us how \projections{} within small distance interact in the $2$-neighbour bootstrap percolation process. 
\begin{lem}\label{lem:projections_span_projection}
    Let $\subcube{}',\subcube{}''$ be two \projections{} of $G$ satisfying $\dist{\subcube{}'}{\subcube{}''} \leq 2$. Then, there exists a \projection{} $\subcube{}$ of $G$ satisfying $\dimension{\subcube{}} \leq \dimension{\subcube{}'}+\dimension{\subcube{}''}+\dist{\subcube{}'}{\subcube{}''}$ and
    \begin{align*}
        \closure{\subcube{}' \cup \subcube{}''} = \subcube{}.
    \end{align*}
\end{lem}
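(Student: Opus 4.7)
The plan is to induct on $d := \dist{H'}{H''}$, which lies in $\set{0, 1, 2}$ by hypothesis. Writing $H' = \square_{i=1}^{n} {H'}^{(i)}$ with free-coordinate index set $I' := \setbuilder{i \in [n]}{{H'}^{(i)} = K_k}$ and fixed values $v'_i$ for $i \notin I'$ (and analogously $I'', v''_i$ for $H''$), a direct computation of distances in the Hamming graph gives $d = \abs{\setbuilder{i \notin I' \cup I''}{v'_i \neq v''_i}}$; label these differing coordinates $j_1, \ldots, j_d$. The natural candidate for $H$ is the projection with free coordinates $I' \cup I'' \cup \set{j_1, \ldots, j_d}$, which satisfies $\dim(H) = \abs{I' \cup I''} + d \leq \dim(H') + \dim(H'') + d$. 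Since $H$ is closed by \Cref{lem:projclosed}, the inclusion $\closure{H' \cup H''} \subseteq H$ is automatic; it remains to establish the reverse inclusion.

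For the base case $d = 0$, the projections $H'$ and $H''$ share a common point $z$; after relabelling the vertices of each $K_k$ so that $z$ becomes the origin (equivalently, $v'_i = v''_i = 0$ for all $i \notin I' \cup I''$), the inclusion $H \subseteq \closure{H' \cup H''}$ is proved by a secondary induction on the Hamming distance $d(w)$ of $w \in H$ from the origin. Vertices with $d(w) \leq 1$ lie in $H' \cup H''$ directly; at $d(w) = 2$, the only case in which $w \notin H' \cup H''$ is when the two nonzero coordinates of $w$ split between $I' \setminus I''$ and $I'' \setminus I'$, and then zeroing out each of them in turn produces a neighbor in $H''$ and in $H'$; for $d(w) \geq 3$, zeroing out any one of the nonzero coordinates yields a neighbor at Hamming distance $d(w) - 1$ from the origin, producing at least $3$ infected neighbors by induction.

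The key auxiliary tool for the steps $d \geq 1$ is the following \emph{Slice Lemma}: if $P$ is a projection with a free coordinate $j$, if the slice $P_0 := \setbuilder{v \in P}{v_j = a_0}$ is already fully contained in the closure of the current infection set, and if another slice $P_1 := \setbuilder{v \in P}{v_j = a_1}$ (with $a_1 \neq a_0$) contains at least one infected vertex, then all of $P$ becomes infected. Indeed, every vertex of $P_1$ has an always-infected coordinate-$j$ neighbor in $P_0$, so within the connected Hamming subgraph $P_1$ the effective bootstrap threshold drops to $1$ and any initial seed in $P_1$ spreads to all of $P_1$; thereafter each remaining slice of $P$ has two infected coordinate-$j$ neighbors and fills in one further round.

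For the inductive step $d = 1$, let $j$ be the unique differing coordinate and let $H^*$ denote the projection obtained from $H'$ by additionally freeing the coordinate $j$. Applying the Slice Lemma with $P = H^*$, $P_0 = H'$, $P_1 = \setbuilder{v \in H^*}{v_j = v''_j}$ and seed $H^* \cap H''$ (a nonempty sub-projection contained in $H''$) yields $H^* \subseteq \closure{H' \cup H''}$, and since $H^* \cap H'' \neq \emptyset$ the base case applied to $H^*$ and $H''$ produces the target projection $H$. For $d = 2$, choose $x \in H'$ and $y \in H''$ with $\dist{x}{y} = 2$ (forcing them to differ exactly at $j_1, j_2$) and define the \lq bridge vertex\rq\ $w$ by $w_i := x_i$ for $i \neq j_2$ and $w_{j_2} := v''_{j_2}$: then $x$ and $y$ are both neighbors of $w$ and lie in $H' \cup H''$, so $w$ is infected after a single round. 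Applying the Slice Lemma to $P = H'_{j_2}$ (the extension of $H'$ obtained by freeing the coordinate $j_2$), with $P_0 = H'$ and seed $\set{w}$, gives $H'_{j_2} \subseteq \closure{H' \cup H''}$; since $\dist{H'_{j_2}}{H''} = 1$, the already-proved $d = 1$ case completes the argument. The principal technical nuisance will be the careful bookkeeping of which coordinates remain fixed and which become free as one passes through the intermediate projections $H^*$ and $H'_{j_2}$, and verifying in each case that the intersections used to seed the Slice Lemma are indeed nonempty.
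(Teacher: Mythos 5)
Your proposal is correct, and it takes a genuinely different route from the paper's. The paper reduces everything to a single auxiliary lemma (``projection plus one vertex at distance at most two spans a projection'', proved by a coordinate-by-coordinate induction for distance one and an intermediate-vertex reduction for distance two) and then absorbs $\subcube{}''$ into $\subcube{}'$ \emph{one vertex at a time}, using connectivity of $\subcube{}''$ to order its vertices so that each new vertex lies at distance at most one from the projection spanned so far. You instead identify the target projection $H$ up front, prove the reverse inclusion $\closure{\subcube{}'\cup\subcube{}''}\subseteq H$ for free from \Cref{lem:projclosed}, handle two \emph{intersecting} projections directly by an induction on Hamming distance from a common point, and package the ``once a parallel slice is fully infected the threshold inside the next slice drops to one'' mechanism as a standalone Slice Lemma used to free the bridging coordinates for $d=1,2$. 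That mechanism is essentially the same engine as the paper's induction in the distance-one case, but your formulation is more modular: the Slice Lemma is reusable, and your $d=0$ case gives a clean direct proof that two intersecting projections span the projection on the union of their free coordinates, which the paper never states explicitly. The trade-off is that you need the slightly more delicate Hamming-distance induction (where the only nontrivial configuration at distance two is one nonzero coordinate in $I'\setminus I''$ and one in $I''\setminus I'$, and distance at least three supplies at least three already-infected neighbours), whereas the paper only ever reasons about a projection together with a single extra vertex. All the nonemptiness and distance checks you flag as bookkeeping (that $H^*\cap\subcube{}''\neq\emptyset$ when $d=1$, that the bridge vertex $w$ lies in the slice $v_{j_2}=v''_{j_2}$, and that $\dist{H'_{j_2}}{\subcube{}''}=1$ because $j_1$ still separates them) do go through, and both approaches obtain the dimension bound identically.
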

\Cref{lem:projections_span_projection} tells us that two \projections{} within distance two span another larger \projection{} that contains both of them.  
\begin{rem}\label{rem:projections_at_dist_three}
Note that if two \projections{} $\subcube{}'$ and $\subcube{}''$ lie at distance at least three, they have no common neighbours and thus do not interact under $2$-neighbour bootstrap percolation, i.e., 
\begin{align*}
    \closure{\subcube{}' \cup \subcube{}''} = \subcube{}' \cup \subcube{}''.
\end{align*}
\end{rem}

By \Cref{lem:projections_span_projection} we know that we can generate larger and larger \projections{} by adding vertices that lie at distance two of the previous \projection{}. The set of vertices used in this construction is very useful in the $2$-neighbour bootstrap percolation process and we define it more precisely in the following.  

\begin{definition}\label{def:sequentiallyspanning}
    A set $U  \subseteq V$  is  called \textbf{sequentially spanning}, if there is an ordering $(\vertexvec{v}_0,\ldots, \vertexvec{v}_{|U|-1})$ of its vertices such that 
    \begin{align}
        \dist{\closure{ \{\vertexvec{v}_{0},\ldots,\vertexvec{v}_{i-1} \}} }{\vertexvec{v}{_{i}}} =  2  \quad \text{ for all } i \in [|U|-1].\label{Prop:sequentiallyspanning}
    \end{align}   
\end{definition}
In words, Property \eqref{Prop:sequentiallyspanning} guarantees that each vertex in the ordering lies exactly at distance two from the \projection{} spanned by the previous vertices. 
We will see  in \Cref{lem:probintspanupperbound} that a majority of all percolating sets in $G$ are sequentially spanning. The next lemma tells us that sequentially spanning sets always span large \projections{}.  

\begin{figure}
    \centering
    \begin{tikzpicture}
\definecolor{myred}{rgb}{1,0.2,0.3}
\definecolor{mycand}{rgb}{1,1,0}
\tikzstyle{vertex}= [circle,draw, fill = white, inner sep=3pt]
\tikzstyle{infvertex} = [circle,draw, fill = myred, inner sep=3pt]
\tikzstyle{candvertex} = [regular polygon, regular polygon sides=4,draw, fill = black, inner sep=3pt]
\def\lengths{2};
\def\lengthl{4}
\coordinate (anchor1) at (1.5,1.5);
\coordinate (anchor2) at (0,0);
\coordinate (anchor3) at (10,1.5);
\coordinate (anchor4) at ($(anchor3) +(anchor2)-(anchor1) $);

\node[vertex] (v1) at (anchor1) {};

\node[vertex] (v2) at ($(anchor1) +(\lengths,0)$){};
\node[vertex] (v3) at ($(anchor1) +(0,\lengths)$){};
\node[vertex] (v4) at ($(anchor1) +(\lengths,\lengths)$){};

\node[vertex] (v5) at ($(anchor1) +(\lengths,0)+(0.5*\lengths,0.5*\lengths)$){};
\node[vertex] (v6) at ($(anchor1) +(0,\lengths)+(0.5*\lengths,0.5*\lengths)$){};
\node[vertex] (v7) at ($(anchor1) +(\lengths,\lengths)+(0.5*\lengths,0.5*\lengths)$){};
 
\filldraw[fill = gray,opacity = 0.5] (v1.center)--(v2.center)--(v4.center)--(v3.center)--cycle;
\node[vertex] at (v2) {};
\node[vertex] at (v3) {};
\node[infvertex] at(v1) {};
\node[infvertex] at(v4) {};
\draw[-] (v5)--(v7)--(v6);
\draw[-] (v2)--(v5);
\draw[-] (v3)--(v6);
\draw[-] (v4)--(v7);

\node[vertex] (w1) at (anchor2) {};

\node[vertex] (w2) at ($(anchor2) +(\lengthl,0)$){};
\node[vertex] (w3) at ($(anchor2) +(0,\lengthl)$){};
\node[vertex] (w4) at ($(anchor2) +(\lengthl,\lengthl)$){};

\node[vertex] (w5) at ($(anchor2) +(\lengthl,0)+(0.5*\lengthl,0.5*\lengthl)$){};
\node[vertex] (w6) at ($(anchor2) +(0,\lengthl)+(0.5*\lengthl,0.5*\lengthl)$){};
\node[vertex] (w7) at ($(anchor2) +(\lengthl,\lengthl)+(0.5*\lengthl,0.5*\lengthl)$){};
 
\draw[-] (w1)--(w2)--(w4)--(w3)--(w1);
\draw[-] (w5)--(w7)--(w6);
\draw[-] (w2)--(w5);
\draw[-] (w3)--(w6);
\draw[-] (w4)--(w7);
\node[vertex]  at (v7){};

\foreach \i in {1,...,7}
\draw[-] (v\i) --(w\i);

\coordinate (anchor1) at (anchor3);
\coordinate (anchor2) at (anchor4);

\node[vertex] (v1) at (anchor1) {};

\node[vertex] (v2) at ($(anchor1) +(\lengths,0)$){};
\node[vertex] (v3) at ($(anchor1) +(0,\lengths)$){};
\node[vertex] (v4) at ($(anchor1) +(\lengths,\lengths)$){};

\node[vertex] (v5) at ($(anchor1) +(\lengths,0)+(0.5*\lengths,0.5*\lengths)$){};
\node[vertex] (v6) at ($(anchor1) +(0,\lengths)+(0.5*\lengths,0.5*\lengths)$){};
\node[vertex] (v7) at ($(anchor1) +(\lengths,\lengths)+(0.5*\lengths,0.5*\lengths)$){};
 
\filldraw[fill = gray,opacity = 0.5] (v1.center)--(v2.center)--(v4.center)--(v3.center)--cycle;
\node[vertex] at (v2) {};
\node[vertex] at (v3) {};
\node[infvertex] at(v1) {};
\node[infvertex] at(v4) {};
\draw[-] (v5)--(v7)--(v6);
\draw[-] (v2)--(v5);
\draw[-] (v3)--(v6);
\draw[-] (v4)--(v7);

\node[vertex] (w1) at (anchor2) {};

\node[vertex] (w2) at ($(anchor2) +(\lengthl,0)$){};
\node[vertex] (w3) at ($(anchor2) +(0,\lengthl)$){};
\node[vertex] (w4) at ($(anchor2) +(\lengthl,\lengthl)$){};

\node[vertex] (w5) at ($(anchor2) +(\lengthl,0)+(0.5*\lengthl,0.5*\lengthl)$){};
\node[vertex] (w6) at ($(anchor2) +(0,\lengthl)+(0.5*\lengthl,0.5*\lengthl)$){};
\node[vertex] (w7) at ($(anchor2) +(\lengthl,\lengthl)+(0.5*\lengthl,0.5*\lengthl)$){};
 
\draw[-] (w1)--(w2)--(w4)--(w3)--(w1);
\draw[-] (w5)--(w7)--(w6);
\draw[-] (w2)--(w5);
\draw[-] (w3)--(w6);
\draw[-] (w4)--(w7);
\node[vertex]  at (v7){};

\foreach \i in {1,...,7}
\draw[-] (v\i) --(w\i);

\node[candvertex] at (w5) {};
\node[candvertex] at (w6) {};
\node[candvertex] at (w7) {};
\draw[->, very thick] (6.5,3)--(8,3) ;

\end{tikzpicture}
    \caption{A \feasible{} set of size two (discs in red) in $\square_{i=1}^4 K_2$ and all possible extensions to a \feasible{} set of size three (squares in black).}
    \label{fig:seqspanseq}
\end{figure}
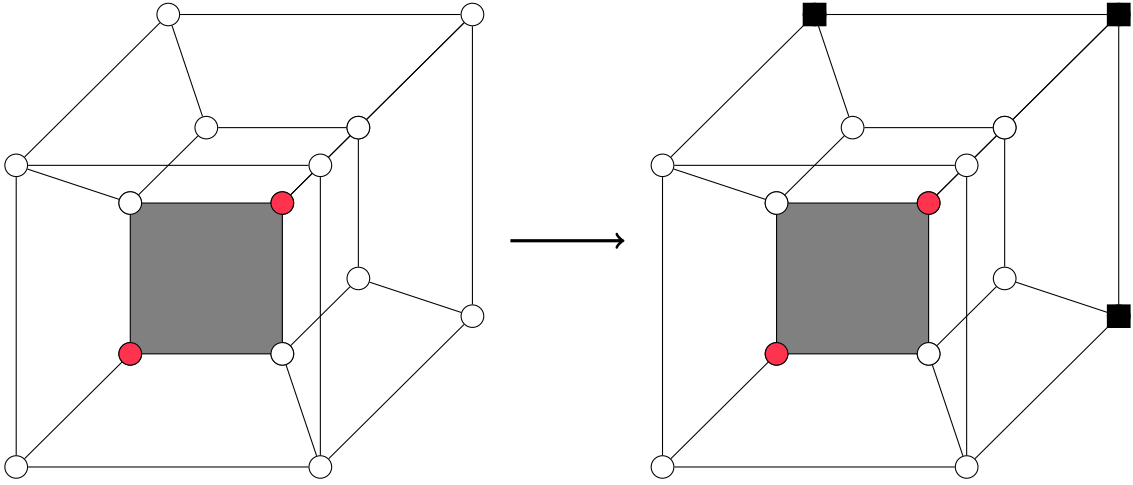

\begin{lem}\label{lem:seqspanningspansprojection}
 Assume that $U\subseteq V$ is a sequentially spanning set. Then there exists a projection $\subcube{}$ of $G$ such that 
    \begin{align*}
\dimension{\subcube{}} = 2(|U|-1) \quad\quad  \text{ and }\quad\quad \closure{U} = \subcube{}.
    \end{align*} 
\end{lem}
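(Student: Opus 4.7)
The plan is to proceed by induction on $|U|$, using \Cref{lem:projections_span_projection} at each step to grow the spanned \projection{} one vertex at a time. When $|U|=1$, say $U=\{\vertexvec{v}_0\}$, the set $\closure{U}=\{\vertexvec{v}_0\}$ is itself a $0$-dimensional \projection{}, matching $2(|U|-1)=0$.

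For the inductive step, suppose the claim holds for sequentially spanning sets of size $m\ge 1$, and let $U=\{\vertexvec{v}_0,\ldots,\vertexvec{v}_m\}$ be sequentially spanning with the ordering witnessing \eqref{Prop:sequentiallyspanning}. Put $U'=\{\vertexvec{v}_0,\ldots,\vertexvec{v}_{m-1}\}$; the restricted ordering makes $U'$ sequentially spanning, so by induction $\closure{U'}=\subcube{}'$ for some \projection{} $\subcube{}'$ with $\dimension{\subcube{}'}=2(m-1)$. Property \eqref{Prop:sequentiallyspanning} applied with $i=m$ gives $\dist{\subcube{}'}{\vertexvec{v}_m}=2$. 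Since $\{\vertexvec{v}_m\}$ is a $0$-dimensional \projection{}, \Cref{lem:projections_span_projection} applied to $\subcube{}'$ and $\{\vertexvec{v}_m\}$ yields a \projection{} $\subcube{}$ with $\closure{\subcube{}'\cup\{\vertexvec{v}_m\}}=\subcube{}$ and $\dimension{\subcube{}}\le 2(m-1)+0+2=2m$. By idempotence of the closure operator, $\closure{U}=\closure{\closure{U'}\cup\{\vertexvec{v}_m\}}=\subcube{}$, as desired.

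The delicate part is verifying that $\dimension{\subcube{}}$ is exactly $2m$ and not merely bounded by $2m$. For this I will unfold the structure of a \projection{} in the Hamming graph: writing $\subcube{}'=\square_{i=1}^n\subcube{}'^{(i)}$ with non-trivial base graphs indexed by $T\subseteq[n]$, so that $|T|=2(m-1)$, and $\subcube{}'^{(i)}=\{a_i\}$ for $i\notin T$, one has for any $\vertexvec{w}\in V$
\[
\dist{\subcube{}'}{\vertexvec{w}} = \left|\setbuilder{i\notin T}{w_i\neq a_i}\right|,
\]
since coordinates in $T$ can always be matched by some vertex of $\subcube{}'$. Hence $\dist{\subcube{}'}{\vertexvec{v}_m}=2$ forces $\vertexvec{v}_m$ to differ from $\subcube{}'$ in exactly two coordinates $i_1,i_2\notin T$. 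Any \projection{} that contains both $\subcube{}'$ and $\vertexvec{v}_m$ is then forced to be non-trivial on every coordinate of $T$ (since it contains $\subcube{}'$) and also on $i_1,i_2$ (since $v_{m,i_j}\neq a_{i_j}$ cannot coexist with $\{a_{i_j}\}$ as a base graph), yielding $\dimension{\subcube{}}\ge |T|+2=2m$. Combined with the upper bound from \Cref{lem:projections_span_projection}, this gives equality.

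I expect this coordinate-bookkeeping step to be the main obstacle; everything else follows routinely once \Cref{lem:projections_span_projection} is invoked and the closure is handled via idempotence.
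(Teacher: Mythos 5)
Your proof is correct and follows essentially the same route as the paper's: induction on $|U|$, peeling off the last vertex, invoking \Cref{lem:projections_span_projection} for the upper bound on the dimension, and a coordinate count showing the spanned projection must be non-trivial in at least $2m$ coordinates for the matching lower bound. Your coordinate-bookkeeping for the lower bound is in fact spelled out in more detail than in the paper, which simply asserts that $\subcube{}'\cup\{\vertexvec{v}_m\}$ has at least $2m$ non-trivial coordinates.
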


Next we discuss an alternative way of looking at the bootstrap percolation process, using projections. 
\subsection{A different view on the bootstrap percolation process}\label{subsec:bootstrapprozess}
Given a set $\initinfec{} \subseteq V$, consider the collection $\intspanset{\initinfec{}}$ of all subsets that are internally spanned by $\initinfec{}$, as in \Cref{def:closure}. For each  $\subcube{} \in \intspanset{\initinfec{}}$  (so $H = \closure{\initinfec{} \cap H}$) we can think of the set $\initinfec{} \cap \subcube{}$ as a set $S_0 = \set{\set{\vertexvec{a}_{0}},\ldots,\set{\vertexvec{a}_{s}}}$ of {\em $0$-dimensional \projections{}}. As each of these \projections{} consists of a single \emph{infected} vertex, it is clear that they are \emph{disjointly internally spanned}. By \Cref{rem:projections_at_dist_three}, \projections{} at distance at least three do not interact at all, so it follows from $\closure{S_0}  = \subcube{}$ that at least two of those \projections{}, w.l.o.g. $\vertexvec{a}_{0}$ and $ \vertexvec{a}_{1}$, are within distance two of each other. By \Cref{lem:projections_span_projection}, $\vertexvec{a}_{0} \cup \vertexvec{a}_{1}$ spans a \projection{} that is by construction disjointly internally spanned from all other \projections{} $\vertexvec{a}_{2}, \ldots, \vertexvec{a}_{s}$. Thus, we obtain a set $S_1 = \set{\closure{\vertexvec{a}_{0} \cup \vertexvec{a}_{1}},\set{\vertexvec{a}_{2}}, \ldots, \set{\vertexvec{a}_{s}}}$ consisting of $|S_1| = |S_0|-1$ disjointly internally spanned \projections{}.
Note that the set $S_1$ is not uniquely determined, as it depends on the choice of \projections{} within distance two. Continuing this process we sequentially obtain sets $S_i$ of disjointly internally spanned \projections{} until at the final step $S_{s} = \{\subcube{}\}$. More formally, we obtain the following. 

\begin{lem}\label{lem:bootstrapprozess}
Let $\subcube{} \in \intspanset{\initinfec{}}$ and set $s \coloneqq \abs{\initinfec{} \cap \subcube{}} -1$. Then, there exist families $S_0,\ldots,S_{s}$ of subsets of $V$ such that the following hold.
    \begin{enumerate}[label = (\arabic*)]
        \item $S_0 = \setbuilder{\{\vertexvec{v}\}}{\vertexvec{v} \in \initinfec{} \cap \subcube{}}$;
        \item for all $ 0 \leq i \leq s-1,$ there exist $H',H'' \in S_i \text{ such that } S_{i+1} = S_i \cup \left\{ \closure{H' \cup H''} \right\} \setminus \{H',H''\}$;
        \item for all $ 0 \leq i \leq s$, the elements of $S_i$ are disjointly internally spanned  \projections{} of $G$; 
        \item $S_{s} = \{\subcube{}\}$.    
\end{enumerate}
\end{lem}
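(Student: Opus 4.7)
The plan is to build the sequence $(S_i)_{0\le i\le s}$ by induction on $i$, where at each step we merge two close \projections{} from $S_i$ using \Cref{lem:projections_span_projection}, maintaining throughout the invariant that (i) the elements of $S_i$ are pairwise disjointly internally spanned \projections{} of $G$ by subsets of $\initinfec{}\cap \subcube{}$ whose union is all of $\initinfec{}\cap \subcube{}$, and (ii) $\closure{\bigcup_{H'\in S_i}H'}=\subcube{}$.

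For the base case, I set $S_0:=\setbuilder{\{\vertexvec{v}\}}{\vertexvec{v}\in \initinfec{}\cap \subcube{}}$. Since singletons are $0$-dimensional \projections{}, property (3) is immediate (the witnesses are the singletons themselves, which are disjoint subsets of $\initinfec{}\cap \subcube{}$), and since $\subcube{}=\closure{\initinfec{}\cap \subcube{}}$, the invariant (ii) is also satisfied with $|S_0|=s+1$.

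For the inductive step, suppose $S_i$ has been constructed with $|S_i|\ge 2$. The main obstacle, and the heart of the argument, is to show that $S_i$ contains two \projections{} $H',H''$ with $\dist{H'}{H''}\le 2$. I will argue this by contradiction: if every pair in $S_i$ were at distance at least three, then by \Cref{rem:projections_at_dist_three}, no element of $S_i$ has any common neighbour with another, so $\closure{\bigcup_{H'\in S_i}H'}=\bigcup_{H'\in S_i}H'$, meaning by invariant (ii) that $\subcube{}$ decomposes as a disjoint vertex-union of the $H'\in S_i$ with no edges of $G$ connecting different parts. However, as a Cartesian product of non-empty connected base graphs $K_k$ and singletons, the \projection{} $\subcube{}$ is a connected subgraph of $G$, contradicting $|S_i|\ge 2$. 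Hence such $H',H''$ exist, and I define
\begin{align*}
S_{i+1}:=S_i\cup\set{\closure{H'\cup H''}}\setminus\set{H',H''}.
\end{align*}

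It remains to verify the invariants are preserved. By \Cref{lem:projections_span_projection}, $\closure{H'\cup H''}$ is again a \projection{} of $G$. If $\initinfec{H'},\initinfec{H''}\subseteq \initinfec{}\cap \subcube{}$ are the disjoint witnesses spanning $H'$ and $H''$, then $\initinfec{H'}\cup \initinfec{H''}$ is disjoint from every witness of the remaining elements of $S_i$; moreover by idempotence and monotonicity of the closure operator,
\begin{align*}
\closure{(\initinfec{H'}\cup \initinfec{H''})\cap \closure{H'\cup H''}}=\closure{H'\cup H''},
\end{align*}
so $\closure{H'\cup H''}$ is internally spanned by $\initinfec{H'}\cup \initinfec{H''}$, and property (3) carries over to $S_{i+1}$. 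Invariant (ii) is preserved because taking closures commutes with the merge and $\closure{\bigcup_{H'''\in S_{i+1}}H'''}=\closure{\bigcup_{H'\in S_i}H'}=\subcube{}$. Since $|S_{i+1}|=|S_i|-1$, after exactly $s$ merges I reach $|S_s|=1$, and by (ii) the unique element of $S_s$ must equal $\subcube{}$, giving (4). The main obstacle, as indicated above, is the connectivity argument establishing that some pair in $S_i$ lies at distance at most two; everything else is bookkeeping of the witnessing subsets.
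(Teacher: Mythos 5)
Your proposal is correct and follows essentially the same route as the paper: build $S_0$ from the singletons of $\initinfec{}\cap\subcube{}$, repeatedly merge two \projections{} within distance two via \Cref{lem:projections_span_projection} while tracking disjoint witnessing subsets of the initially infected vertices, and use \Cref{rem:projections_at_dist_three} to guarantee such a pair exists. Your explicit connectivity argument (a \projection{} is connected, whereas a union of $\ge 2$ sets pairwise at distance $\ge 3$ is not) is a clean way to make precise the step the paper only asserts informally.
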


Now given $\subcube{} \in \intspanset{\initinfec{}}$ and any integer $1 \leq t \leq \dimension{\subcube{}}$ we consider the first time a \projection{} of dimension at least $t$ appears in this process. Looking at the two \projections{} that were joined in this step, \Cref{lem:bootstrapprozess} immediately implies the following key result.

\begin{lem}\label{lem:wittnessingquad}
    Let $\subcube{} \in \intspanset{\initinfec{}}$. Then for every $1 \leq t \leq \dimension{\subcube{}}$, there exist indices $(\ell,i,j,d) \in \N_0^4$ with $\ell \geq t, \ j \leq i <t$ and $d \in \{0,1,2\}$ and \projections{} $\subcube{i}, \subcube{j}, \subcube{\ell}$ of respective dimensions $i,j,\ell$ such that 
    \begin{enumerate}[label = (\arabic*)]
        \item\label{Property:witquad:dist} $\dist{\subcube{i}}{\subcube{j}} = d$;
        \item\label{Property:witquad:span} $\closure{\subcube{i} \cup \subcube{j}} = \subcube{\ell}$;
        \item\label{Property:witquad:disjspanned} $\set{\subcube{i}, \subcube{j}} \subseteq_d \intspanset{\initinfec{}}$.
    \end{enumerate}
    We call the quadruple $(\subcube{\ell},\subcube{i}, \subcube{j},d)$ a {\bf \witquad{}} for $\subcube{}$ at threshold $t$. 
\end{lem}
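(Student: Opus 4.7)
The plan is to apply \Cref{lem:bootstrapprozess} to the projection $H\in\intspanset{\initinfec{}}$ and then track the first moment in the merging sequence when a projection of dimension at least $t$ appears.

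First I would invoke \Cref{lem:bootstrapprozess} to obtain the families $S_0,\ldots,S_s$ of disjointly internally spanned projections, where $s=|\initinfec{}\cap H|-1$, $S_0$ consists solely of $0$-dimensional projections (single vertices), and $S_s=\{H\}$. Since $\dimension{H}\ge t\ge 1$, some projection of dimension $\ge t$ eventually appears; on the other hand all projections in $S_0$ have dimension $0<t$. Thus there is a well-defined smallest index $m\in[s]$ such that $S_m$ contains a projection of dimension at least $t$. By property (2) of \Cref{lem:bootstrapprozess}, there are $H',H''\in S_{m-1}$ with $S_m = S_{m-1}\cup\{\closure{H'\cup H''}\}\setminus\{H',H''\}$; by minimality of $m$, every element of $S_{m-1}$ has dimension strictly less than $t$, so the new projection $\subcube{\ell}:=\closure{H'\cup H''}$ must be the one of dimension $\ell\ge t$.

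Next I would set $i:=\max\{\dimension{H'},\dimension{H''}\}$ and $j:=\min\{\dimension{H'},\dimension{H''}\}$, relabelling so that $\subcube{i}$ and $\subcube{j}$ denote $H'$ and $H''$ accordingly; by construction $j\le i<t\le \ell$, establishing the index constraints. Property \ref{Property:witquad:span} then holds by definition of $\subcube{\ell}$, and property \ref{Property:witquad:disjspanned} follows directly from property (3) of \Cref{lem:bootstrapprozess} applied to $S_{m-1}$, which guarantees that all its elements (including $\subcube{i}$ and $\subcube{j}$) are disjointly internally spanned by $\initinfec{}$.

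Finally I would verify property \ref{Property:witquad:dist}. Set $d:=\dist{\subcube{i}}{\subcube{j}}$. If $d\ge 3$, then by \Cref{rem:projections_at_dist_three} we would have $\closure{\subcube{i}\cup\subcube{j}} = \subcube{i}\cup\subcube{j}$, which is not a projection (unless one is contained in the other, but disjoint nonempty projections at positive distance cannot merge into a single projection). In particular, the dimension would then satisfy $\ell<\dimension{\subcube{i}}+\dimension{\subcube{j}}+d$ fails to produce a genuine projection of dimension $\ell\ge t$; more directly, $\subcube{i}\cup \subcube{j}$ is not a projection, contradicting the fact that $\subcube{\ell}=\closure{\subcube{i}\cup\subcube{j}}$ is a projection by \Cref{lem:projections_span_projection}. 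Hence $d\in\{0,1,2\}$, completing the verification of all required conditions.

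The argument is essentially bookkeeping on top of \Cref{lem:bootstrapprozess}, so no step is technically hard; the only point requiring care is the last one, namely arguing cleanly that the two merging projections must be within distance two, which is where \Cref{rem:projections_at_dist_three} together with \Cref{lem:projections_span_projection} is used.
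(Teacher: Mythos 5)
Your proof is correct and follows exactly the route the paper intends: the paper gives no written proof, noting only that the statement follows "immediately" from \Cref{lem:bootstrapprozess} by examining the first step at which a projection of dimension at least $t$ appears, which is precisely your argument. The one small point worth tidying is the justification of $d\le 2$: the cleanest contradiction is that property (3) of \Cref{lem:bootstrapprozess} forces $\closure{\subcube{i}\cup\subcube{j}}$ to be a projection, while \Cref{rem:projections_at_dist_three} would make it a disconnected union if $d\ge 3$ — your appeal to \Cref{lem:projections_span_projection} there is slightly circular since that lemma only applies when $d\le 2$.
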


\section{The lower threshold: Proof of \Cref{thm:main:threshold} \ref{thm:main:threshold:lower}}\label{sec:LowerBound}
In the rest of the paper we assume that $n\in \N$ is sufficiently large and let $G= (V,E)  = \square_{i=1}^n K_k$ be the $n$-dimensional Hamming graph with $2 \leq k \leq 2^{\sqrt{n}}$ and let $\boldsymbol{\initinfec{}}_p$ be a $p$-random subset of $V$, as in \Cref{thm:main:threshold}. Let 
\begin{align*}
   \intspanset{} = \intspanset{\boldsymbol{\initinfec{}}_p} \coloneqq \setbuilder{U \subseteq V}{U = \closure{\boldsymbol{\initinfec{}}_p \cap U}}
\end{align*}
be the collection of all subsets of $V$ that are internally spanned by $\boldsymbol{\initinfec{}}_p$. Recall that we write $\subcube{\ell}$ for a \projection{} of dimension $\ell \in \N_0$ in order to keep track of its dimension.

This section is devoted to the proof of \crefitemintheorem{thm:main:threshold}{thm:main:threshold:lower}, along which we will introduce some auxiliary results, but we defer their proofs to \Cref{sec:proof:lower}. So we assume 
\begin{align*}
    p \leq p_* \coloneqq n^{-2}k^{-2\sqrt{n}+1}
\end{align*}
and will show  \begin{align*}
     \prob{G \text{ percolates}} =  \prob{\closure{\boldsymbol{\initinfec{}}_p} = V}=\smallo{1}.
\end{align*}
We first note that in many bootstrap percolation processes the bottleneck for percolation is given by the appearance of a small substructure of the underlying graph, called a \emph{critical droplet}. In our case, this critical droplet is an \intspanproj{} of a specific dimension $\criticaldimension \in [n]$, where $\criticaldimension$ is chosen such that the expected number of \intspanprojs{} is minimal: We call $\criticaldimension$ the {\em critical dimension}.

Assume that $G$ percolates, i.e., $\closure{\boldsymbol{\initinfec{}}_p} = V$. As described in Lemma \ref{lem:bootstrapprozess}, we can view the set of initially infected vertices as a collection of 0-dimensional \projections{}  that sequentially pairwise span larger \projections{} until in the final step two \projections{} together span $G$. In particular, for every threshold dimension $t \in [n]$ we can look at the first time a \projection{} $\subcube{}$ of dimension at least $t$ appears in this process and consider the \projections{} that together span $\subcube{}$. More precisely, it follows from \Cref{lem:wittnessingquad} that there has to exist a \emph{\witquad{}} for $G$ at this threshold and thus we have 
\begin{align}
    \prob{G \text{ percolates}} &\leq \prob{\text{there exists a \witquad{} for $G$ at threshold $t$}}. \label{eq:perc_unionbound_1}
\end{align}%
In order for a quadruple $(\subcube{\ell},\subcube{i}, \subcube{j},d)$ to be a \witquad{}, it has to satisfy Properties \ref{Property:witquad:dist},\ref{Property:witquad:span} and \ref{Property:witquad:disjspanned} of \Cref{lem:wittnessingquad} and the indices $(\ell,i,j,d) \in \N_0^4$ should  fulfill the dimension constraints $\ell \geq t,\ j \leq i  <t$ and $d \in \{0,1,2\}$. However, note that all requirements except Property \ref{Property:witquad:disjspanned} of \Cref{lem:wittnessingquad} are deterministic statements that do not depend on the set of initially infected vertices. 

We call all quadruples $(\subcube{\ell},\subcube{i}, \subcube{j},d)$ that satisfy the dimension constraints as well as Properties \ref{Property:witquad:dist} and \ref{Property:witquad:span} of \Cref{lem:wittnessingquad} {\bf candidate quadruples} and denote by $\cand{G}{t}$ the set of all candidate quadruples. Thus, a candidate quadruple is a \witquad{} if and only if it satisfies Property \ref{Property:witquad:disjspanned} of \Cref{lem:wittnessingquad}. On the other hand, by the van den Berg-Kesten Lemma \ref{lem:vandenbergkesten}, it holds that
\begin{align*}
    \prob{\set{\subcube{i},\subcube{j}}\subseteq_d \intspanset{}} \leq \prob{\subcube{i} \in \intspanset{}} \cdot \prob{\subcube{j} \in \intspanset{}}.
\end{align*} 
Hence, for any candidate quadruple $(\subcube{\ell},\subcube{i}, \subcube{j},d) \in \cand{G}{t}$ we have 
\begin{align}
    \prob{(\subcube{\ell},\subcube{i}, \subcube{j},d) \text{ is a \witquad{}}} \leq \prob{\subcube{i} \in \intspanset{}} \cdot \prob{\subcube{j} \in \intspanset{}}. \label{eq:perc_unionbound_2}
\end{align} 
Putting \eqref{eq:perc_unionbound_1}  and \eqref{eq:perc_unionbound_2} together and taking a union bound over all candidate quadruples  we obtain the following bound for all $t \in [n]$:
\begin{align}
    \prob{G \text{ percolates}} &\leq \prob{\text{there exists a \witquad{} for $G$ at threshold $t$}} \notag \\
    &\leq \sum_{(\subcube{\ell},\subcube{i}, \subcube{j},d) \in \cand{G}{t}} \prob{\subcube{i} \in \intspanset{}} \cdot \prob{\subcube{j} \in \intspanset{}}. \label{eq:perc_unionbound}
\end{align}

Due to the symmetry of $G$, we have that for any \projection{} $\subcube{}$ of $G$, the probability $ \prob{\subcube{} \in \intspanset{}}$ solely depends on $\dimension{\subcube{}}$ (and does not depend on its structure). Thus, for each  $\ell \in [n]\cup\set{0}$ and each \projection{} $\subcube{\ell}$ of dimension $\ell$, we define
\begin{align}
    \probcubeintspan{\ell} \coloneqq \prob{\subcube{\ell} \in \intspanset{}}.\label{eq:def:probcupeintspan}
\end{align}

Given indices $(\ell,i,j,d) \in \N_0^4$ let us denote by
\begin{align}
    \union{G}{t}{\ell}{i}{j}{d} \coloneqq \left| \setbuilder{\parens{\subcube{\ell'},\subcube{i'}, \subcube{j'},d'} \in \cand{G}{t}}{\parens{\ell' =\ell} \land \parens{i'=i}\land \parens{j'=j} \land \parens{d'=d}}\right|\label{eq:def:candidatequad}
\end{align} the number of candidate quadruples in $G$ at threshold $t$ with fixed indices $(\ell,i,j,d)$. Let us determine for which sets of indices this function is actually non-zero. From \Cref{lem:bootstrapprozess} it follows that $\ell \geq t$, $j \leq i <t$ and $d \in \{0,1,2\}$. Also, because the projections $\subcube{i}$ and $\subcube{j}$ have to lie at distance $d$ it is not possible that $i = \ell-1$ and $d =2$, yielding the condition $i+d<\ell$. Furthermore, by \Cref{lem:projections_span_projection}, in order for the projections $\subcube{i}$ and $\subcube{j}$ to span the projection $\subcube{\ell}$ it also has to hold that $i+j+d \geq \ell$. Finally, note that because $\subcube{\ell}$ is a \projection{} of $G$, we have $\ell = \dimension{\subcube{\ell}}\leq \dimension{G}$. These constraints lead to the set of all {\bf admissible indices}, denoted by
\begin{align*}
    \indextriplesum{G}{t} := \setbuilder{(\ell,i,j,d)\in \N_0^4}{\left(t \leq \ell \leq \minp{}{i+j+d,\dimension{G}}\right) \land (j \leq i <t) \land (i+d <\ell) \land ( d \in \{0,1,2\})}.
\end{align*} 

Analogously, we define  $\cand{\subcube{}}{t}$, $U_{\subcube{},t}(\cdot)$  and  $\indextriplesum{\subcube{}}{t}$ for a projection $\subcube{}$  of $G$ and a threshold $t \in \left[\dimension{\subcube{}}\right]$. 

In terms of the functions $ \probcubeintspan{\cdot}$ and $U_{\subcube{},t}(\cdot)$ defined in \eqref{eq:def:probcupeintspan} and \eqref{eq:def:candidatequad}, the percolation probability bound \eqref{eq:perc_unionbound} can be written as follows. 
\begin{lem}\label{lem:perc_unionbound_reformulation}
For every $t \in  [n]$ we have
    \begin{align} \label{eq:perc_unionbound_reformulation}
    \prob{G \text{ percolates}} \leq \sum_{(\ell,i,j,d) \in \indextriplesum{G}{t}} \union{G}{t}{\ell}{i}{j}{d} \probcubeintspan{i} \probcubeintspan{j}.
\end{align}
\end{lem}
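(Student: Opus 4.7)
The plan is to read Lemma \ref{lem:perc_unionbound_reformulation} as a clean bookkeeping consolidation of the inequalities \eqref{eq:perc_unionbound_1} and \eqref{eq:perc_unionbound_2}, which the preceding discussion has already assembled into \eqref{eq:perc_unionbound}. The only substantive steps are (i) to replace the projection-specific probabilities in \eqref{eq:perc_unionbound} by the dimension-only function $\probcubeintspan{\cdot}$, and (ii) to regroup candidate quadruples in $\cand{G}{t}$ by their signature $(\ell,i,j,d)$, checking that the index set of nonzero contributions is exactly $\indextriplesum{G}{t}$.

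For (i), I would appeal to the symmetries of the Hamming graph. The automorphism group of $G = \square_{i=1}^n K_k$ acts transitively on the set of $\ell$-dimensional projections for each $\ell \in [n]\cup\{0\}$: combining a coordinate permutation with independent permutations of the vertices within each coordinate factor sends any \projection{} of dimension $\ell$ to any other. Since $\boldsymbol{\initinfec{}}_p$ is invariant in distribution under these automorphisms (the vertices are i.i.d.), the event $\{\subcube{} \in \intspanset{}\}$ has the same probability for all projections of a given dimension, so $\prob{\subcube{i} \in \intspanset{}} = \probcubeintspan{i}$ is well-defined, as in \eqref{eq:def:probcupeintspan}. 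The summand in \eqref{eq:perc_unionbound} then becomes $\probcubeintspan{i}\,\probcubeintspan{j}$.

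For (ii), I would partition $\cand{G}{t}$ according to the tuple $(\ell,i,j,d)$. By definition \eqref{eq:def:candidatequad} the number of candidate quadruples in each class is exactly $\union{G}{t}{\ell}{i}{j}{d}$, so the right-hand side of \eqref{eq:perc_unionbound} rewrites as
\[
\sum_{(\ell,i,j,d)} \union{G}{t}{\ell}{i}{j}{d}\,\probcubeintspan{i}\,\probcubeintspan{j}.
\]
It then suffices to verify that $\indextriplesum{G}{t}$ contains every tuple $(\ell,i,j,d)$ for which this count is nonzero. This is precisely the discussion preceding the definition of $\indextriplesum{G}{t}$: the bounds $t \le \ell$, $j \le i < t$ and $d \in \{0,1,2\}$ come from the definition of a witnessing quadruple together with \Cref{lem:bootstrapprozess}; $\ell \le \dimension{G}$ holds because $\subcube{\ell}$ is a \projection{} of $G$; $i + d < \ell$ follows from the impossibility of having $i = \ell-1$ and $d = 2$ simultaneously (two disjoint projections cannot be at distance $2$ in that regime); and $\ell \le i + j + d$ is forced by \Cref{lem:projections_span_projection}. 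Restricting the outer sum to $\indextriplesum{G}{t}$ is therefore harmless.

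The only point deserving care rather than difficulty is the symmetry argument in (i): one must make explicit that the automorphism group of $\square_{i=1}^n K_k$ acts transitively on $\ell$-dimensional projections for each $\ell$, so that $\probcubeintspan{\ell}$ genuinely depends only on the dimension. Once this is acknowledged, the lemma is a mechanical reindexing of the inequality already established in the text.
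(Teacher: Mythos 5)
Your proposal is correct and follows essentially the same route as the paper: the lemma is obtained by combining \eqref{eq:perc_unionbound_1} and \eqref{eq:perc_unionbound_2} into \eqref{eq:perc_unionbound}, invoking the vertex- and coordinate-transitivity of $\square_{i=1}^n K_k$ so that $\probcubeintspan{\cdot}$ depends only on the dimension, and then regrouping the candidate quadruples by their index signature, noting that the nonzero classes are exactly those indexed by $\indextriplesum{G}{t}$. Your explicit justification of the transitive automorphism action is a welcome elaboration of what the paper dispatches with the phrase ``due to the symmetry of $G$.''
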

The value of $U_{G,t}(\cdot)$ will be calculated quite explicitly in \Cref{lem:U()computation}. Thus, a key step in deriving an upper bound on the percolation probability is to obtain good bounds on the probability of a \projection{} being internally spanned and choosing the optimal threshold value $t$. As we will see in \Cref{sec:proof:lower},  the {\bf critical dimension} in which the right hand side of \eqref{eq:perc_unionbound_reformulation}  yields the best bounds is  $$\criticaldimension:= \rounddown{2\sqrt{n}}-2.$$

In order to bound  the right hand side of \eqref{eq:perc_unionbound_reformulation} from above,  we need the following upper bound on the function $\probcubeintspan{\cdot}$ defined in \eqref{eq:def:probcupeintspan}.
\begin{lem}\label{lem:probintspanupperbound}
For all $m \in [\criticaldimension] \cup \{0\}$ we have
    \begin{align}\label{eq:expectednumberSS}
        \probcubeintspan{m} \leq \upboundcubeintspan{m}
    \end{align} 
    where the function $\Phi:\mathbb N_0 \to \mathbb R$ is defined as 
     \begin{align}
      \upboundcubeintspan{m} \coloneqq \upboundcubeintspanconstant{m} p^{\frac{m}{2}+1} m!\  2^{-\frac{m}{2}}(k-1)^m k^\frac{m^2+2m}{4} \label{def:upboundcubeintspan}
    \end{align}    
     with 
 $$\upboundcubeintspanconstant{m} \coloneqq \indicator{m=0} +\indicator{1 \leq m \leq 14}  \frac{1}{2}\left(\frac{9 \sqrt{2}}{10}\right)^m  + \indicator{m\geq 15}\upboundcubeintspanconstant{m-1} \left(1+ k^{-\frac{m
        -11}{2}}\right)\left(1+ \frac{4}{n}\right).$$    
\end{lem}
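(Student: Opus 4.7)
The natural approach is strong induction on $m$. The base case $m=0$ is immediate: a projection of dimension $0$ is a single vertex $v$, so $\probcubeintspan{0} = \prob{v \in \boldsymbol{\initinfec{}}_p} = p$, which equals $\upboundcubeintspan{0}$ since $\upboundcubeintspanconstant{0}=1$ and the remaining factors in $\upboundcubeintspan{0}$ collapse to $1$.

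For the inductive step the plan is to invoke the witnessing-quadruple machinery of \Cref{subsec:bootstrapprozess}, applied to the projection $\subcube{m}$ in place of $G$ with threshold $t = m$. If $\subcube{m} \in \intspanset{}$, then \Cref{lem:wittnessingquad} produces a witnessing quadruple $(\subcube{\ell},\subcube{i},\subcube{j},d)$ for $\subcube{m}$; since $\subcube{\ell}\subseteq \subcube{m}$ forces $\ell\le m$ while the threshold forces $\ell \ge m$, necessarily $\ell = m$. The van den Berg--Kesten inequality (\Cref{lem:vandenbergkesten}) combined with a union bound over admissible candidate quadruples then yields the analogue of \Cref{lem:perc_unionbound_reformulation} for $\subcube{m}$, namely
\begin{align*}
\probcubeintspan{m} \le \sum_{(m,i,j,d)\in \indextriplesum{\subcube{m}}{m}} \union{\subcube{m}}{m}{m}{i}{j}{d}\,\probcubeintspan{i}\,\probcubeintspan{j}.
\end{align*}
The inductive hypothesis $\probcubeintspan{i}\le \upboundcubeintspan{i}$ and $\probcubeintspan{j}\le \upboundcubeintspan{j}$ for all $i,j<m$ then reduces the task to verifying
\begin{align*}
\sum_{(m,i,j,d)\in \indextriplesum{\subcube{m}}{m}} \union{\subcube{m}}{m}{m}{i}{j}{d}\,\upboundcubeintspan{i}\,\upboundcubeintspan{j} \le \upboundcubeintspan{m}.
\end{align*}

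The crux of the proof is the evaluation of this sum. An explicit count of $\union{\subcube{m}}{m}{m}{i}{j}{d}$---the number of ordered pairs of sub-projections $(\subcube{i},\subcube{j})$ of $\subcube{m}$ at distance $d$ whose joint closure equals $\subcube{m}$---can be obtained combinatorially and is flagged by the paper for a subsequent lemma. Given such a count, one compares each summand to $\upboundcubeintspan{m}$ by expanding the ratio $\upboundcubeintspan{i}\,\upboundcubeintspan{j}/\upboundcubeintspan{m}$ in terms of $p$, $m$, $n$, $k$ and the constants $\upboundcubeintspanconstant{\cdot}$. Direct calculation suggests that the dominant contributions come from near-extreme triples such as $(i,j,d) \in \{(m-1,1,0),(m-2,2,0),(m-3,1,2)\}$, while the remaining summands form a geometric tail in negative powers of $k$ and of $(m-i-j)$. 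The recursive definition $\upboundcubeintspanconstant{m} = \upboundcubeintspanconstant{m-1}(1+k^{-(m-11)/2})(1+4/n)$ for $m\ge 15$ is engineered precisely so that the factor $(1+k^{-(m-11)/2})$ absorbs the geometric tail in $k$ while $(1+4/n)$ absorbs the combinatorial slack arising from the enumeration of $\union{\subcube{m}}{m}{\cdot}{\cdot}{\cdot}{\cdot}$.

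For small $m\in[1,14]$ the constant $\upboundcubeintspanconstant{m} = \frac{1}{2}(9\sqrt{2}/10)^m$ decouples from the recursion and is chosen large enough to accommodate the inductive bound; verification here reduces to a finite case-by-case calculation in which the admissible triples $(i,j,d)$ are few and every summand can be compared to $\upboundcubeintspan{m}$ explicitly. The main obstacle I anticipate is the careful bookkeeping of the inductive step for $m\ge 15$: one must track the powers of $p$, $k$, $n$, $m$ simultaneously across every admissible $(i,j,d)\in \indextriplesum{\subcube{m}}{m}$ and confirm that the two correction factors in the recursion for $\upboundcubeintspanconstant{m}$ together suffice to close the induction, making crucial use of the regimes $k\le 2^{\sqrt{n}}$ and $m\le \criticaldimension = \rounddown{2\sqrt{n}}-2$ to ensure that no tail contribution blows up.
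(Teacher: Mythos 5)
Your skeleton is the same as the paper's: induction on $m$, base cases for small $m$ by finite checking, and for the inductive step the application of \Cref{lem:perc_unionbound_reformulation} to $\subcube{m}$ at threshold $t=m$ together with the induction hypothesis, reducing everything to the inequality
\begin{align*}
\sum_{(\ell,i,j,d)\in \indextriplesum{\subcube{m}}{m}} \union{\subcube{m}}{m}{\ell}{i}{j}{d}\,\upboundcubeintspan{i}\,\upboundcubeintspan{j} \leq \upboundcubeintspan{m}.
\end{align*}
However, this inequality is the entire quantitative content of the lemma, and you do not prove it; you only assert that "direct calculation suggests" certain terms dominate and that the two correction factors in the recursion for $\upboundcubeintspanconstant{m}$ are "engineered" to absorb the rest. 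In the paper this step is \Cref{lem:triplesumdomination}, which itself rests on three separate ratio comparisons (\Cref{lem:sumdom:j,lem:sumdom:i,lem:sumdom:d}) plus the consecutive-ratio bound \Cref{lem:ratio:P(j):estimate}; none of that analysis appears in your proposal, so the induction is not actually closed.

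Moreover, the dominant terms you name, $(i,j,d)\in\{(m-1,1,0),(m-2,2,0),(m-3,1,2)\}$, are not the right ones. Since $\upboundcubeintspan{j+1}/\upboundcubeintspan{j}\leq \frac{j+1}{n}k^{(j+5-2\sqrt n)/2}\ll 1$ throughout the range $j<\criticaldimension$, the product $\upboundcubeintspan{i}\upboundcubeintspan{j}$ subject to $i+j+d\geq m$ is maximised by taking $j$ as small and $i$ as large as the constraints allow; the sum is in fact dominated by the single term $(i,j,d)=(m-2,0,2)$, i.e.\ a projection of dimension $m-2$ joined with a \emph{single vertex} at distance two (the sequentially-spanning step). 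This is not a cosmetic point: the induction closes because of the exact identity $\ffunc{\subcube{}}{m}{m}{m-2}{0}{2}=\frac{\upboundcubeintspanconstant{m-2}}{\upboundcubeintspanconstant{m}}\upboundcubeintspan{m}$ (one checks $(m-2)!\binom{m}{2}=m!/2$ and that the powers of $2$, $k-1$ and $k$ recombine to give exactly the exponents in \eqref{def:upboundcubeintspan}), so that the factor $\left(1+k^{-(m-11)/2}\right)\left(1+\frac4n\right)$ covering all non-dominant terms is precisely the ratio $\upboundcubeintspanconstant{m}/\upboundcubeintspanconstant{m-1}\le \upboundcubeintspanconstant{m}/\upboundcubeintspanconstant{m-2}$. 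With the triples you propose (all of which have $j\geq 1$) the bookkeeping would not reproduce $\upboundcubeintspan{m}$ on the nose, and the recursion for $\upboundcubeintspanconstant{m}$ would not match.
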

Before we proceed with the proof of \crefitemintheorem{thm:main:threshold}{thm:main:threshold:lower} let us give some intuition on the function $\Phi$. The  value  $\upboundcubeintspan{0} = p$ is exactly the probability of a $0$-dimensional \projection{} (i.e., a single vertex) being internally spanned by a $p$-random subset $\boldsymbol{\initinfec{}}_p$. For {\em even}  $m\in [\criticaldimension]$, the value $\upboundcubeintspan{m}$ is equal to the {\em expected number of sequentially spanning sets} in a \projection{} of dimension $m$,  up to a multiplicative factor $\upboundcubeintspanconstant{m}$ and up to permutations of vertices. Thus, \Cref{lem:probintspanupperbound} states that a majority of percolating sets is given by sequentially spanning sets. For odd $m\in [\criticaldimension]$, the value $\upboundcubeintspan{m}$  overestimates the actual probability $ \probcubeintspan{m}$ by quite a large margin, but spares us from a tedious case distinction. Lastly, note that for all $m \in [\criticaldimension] \cup \{0\}$ the functions $\upboundcubeintspanconstant{m}$ are bounded from above by a universal constant (see \Cref{lem:upperboundcubeintspanconst}). 

The key to showing \Cref{lem:probintspanupperbound} lies in the following observation: Any \projection{} $\subcube{}$ of $G$ is isomorphic to a Hamming graph of dimension $\dimension{\subcube{}}$, and thus the event $\{\subcube{} \in \intspanset{}\}=\{\subcube{} \in \intspanset{\boldsymbol{\initinfec{}}_p}\}$ happens with the same probability as the event that the Hamming graph $G' := \square_{i=1}^{\dimension{\subcube{}}} K_k$ percolates. Hence, we can apply \Cref{lem:perc_unionbound_reformulation} to obtain that  for all $m \in [n]$ and for all $t\in [m]$ 
\begin{align*}
    \probcubeintspan{m} \leq \sum_{(\ell,i,j,d) \in \indextriplesum{\subcube{m}}{t}} \union{\subcube{m}}{m}{\ell}{i}{j}{d} \probcubeintspan{i} \probcubeintspan{j}.
\end{align*}
It turns out that we obtain the best bounds by considering the threshold value $t = m$, i.e., by looking at the {\em final} step in the percolation process, where two \projections{} of respective dimensions smaller than $m$ together span $\subcube{m}$.  
Let us assume by induction that the conclusion of \Cref{lem:probintspanupperbound}, i.e., $\probcubeintspan{j} \leq \upboundcubeintspan{j}$, holds for all $j \in [m-1]$ for some $m \in [\criticaldimension] \cup \{0\}$. Because any $(\ell,i,j,d) \in \indextriplesum{\subcube{m}}{m}$ satisfies $j \leq i <m$, we obtain
\begin{align}\label{eq:triplesumbound}
     \probcubeintspan{m} \leq \sum_{(\ell,i,j,d) \in \indextriplesum{\subcube{m}}{m}} \union{\subcube{m}}{m}{\ell}{i}{j}{d} \upboundcubeintspan{i} \upboundcubeintspan{j}.
\end{align}
\Cref{lem:probintspanupperbound} then follows from \eqref{eq:triplesumbound} by a careful comparison of ratios of consecutive terms. 

Returning to the proof of \crefitemintheorem{thm:main:threshold}{thm:main:threshold:lower}, using \Cref{lem:perc_unionbound_reformulation} with $t = \criticaldimension:= \rounddown{2\sqrt{n}}-2$ and \Cref{lem:probintspanupperbound}  we have
\begin{align}\label{eq:perc_unionbound_criticaldimension}
    \prob{G \text{ percolates}} 
    \overset{\eqref{eq:perc_unionbound_reformulation}}&{\leq }\sum_{(\ell,i,j,d) \in \indextriplesum{G}{\criticaldimension}} \union{G}{\criticaldimension}{\ell}{i}{j}{d} \probcubeintspan{i} \probcubeintspan{j}\notag\\
    \overset{\eqref{eq:expectednumberSS}}&{\leq}\sum_{(\ell,i,j,d) \in \indextriplesum{G}{\criticaldimension}} \union{G}{\criticaldimension}{\ell}{i}{j}{d} \upboundcubeintspan{i} \upboundcubeintspan{j}.
\end{align}
Due to technical reasons, we will consider the partition of the set $\indextriplesum{G}{ \criticaldimension}$  into two sets, depending on the size of $\ell$, 
\begin{align}\label{eq:def:admissibleindices_part_one_and_two}
    \indextriplesum{G}{\criticaldimension}^{(1)} \coloneqq \setbuilder{(\ell,i,j,d) \in \indextriplesum{G}{\criticaldimension}}{\ell \leq  3\sqrt{n}} \ \text{ and }\ \indextriplesum{G}{\criticaldimension}^{(2)} \coloneqq \setbuilder{(\ell,i,j,d) \in \indextriplesum{G}{\criticaldimension}}{\ell > 3\sqrt{n}},
\end{align} so that 
\begin{align}\label{eq:def:admissibleindices_parts}
    \indextriplesum{G}{\criticaldimension}  = \indextriplesum{G}{\criticaldimension}^{(1)} \cup \indextriplesum{G}{\criticaldimension}^{(2)}.
\end{align} 
Furthermore, to ease notation on the right hand side of \eqref{eq:perc_unionbound_criticaldimension} we introduce the function 
\begin{align}\label{eq:def:ffunc}
    \ffuncred{\ell}{i}{j}{d} \coloneqq \union{G}{\criticaldimension}{\ell}{i}{j}{d} \upboundcubeintspan{i} \upboundcubeintspan{j}.
\end{align}
Using \eqref{eq:def:admissibleindices_parts} and \eqref{eq:def:ffunc} we rewrite \eqref{eq:perc_unionbound_criticaldimension} as  
\begin{align}
    \prob{G \text{ percolates}} \leq   \sum_{(\ell,i,j,d) \in \indextriplesum{G}{\criticaldimension}^{(1)}}  \ffuncred{\ell}{i}{j}{d} + \sum_{(\ell,i,j,d) \in \indextriplesum{G}{\criticaldimension}^{(2)}}  \ffuncred{\ell}{i}{j}{d}.\label{eq:percbound:splitsum}
\end{align}

We deal with the first and second sums in \eqref{eq:percbound:splitsum} separately. We claim that the second sum over all indices in $\indextriplesum{G}{\criticaldimension}^{(2)}$ converges to zero (see \Cref{claim:expec_to_zero_largeterms})  and  the first sum over all indices in $\indextriplesum{G}{\criticaldimension}^{(1)}$ is bounded from above by a single term  $\ffuncred{\criticaldimension}{\criticaldimension-2}{0}{2}$, up to  a multiplicative factor (see  \Cref{lem:triplesumdomination_part2}). 
\begin{claim}\label{claim:expec_to_zero_largeterms}
    \begin{align*}
    \sum_{(\ell,i,j,d) \in\indextriplesum{G}{\criticaldimension{}}^{(2)}} \ffuncred{\ell}{i}{j}{d} &= o(1). 
    \end{align*}
\end{claim}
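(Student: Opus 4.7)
The plan is to show a pointwise bound $\ffuncred{\ell}{i}{j}{d} = n^{-\omega(1)}$ uniformly over $\indextriplesum{G}{\criticaldimension}^{(2)}$, and then conclude by summing over the at most $\bigo{n^4}$ admissible tuples. To that end I would first combine the explicit formula for $U_{G,\criticaldimension}(\ell,i,j,d)$ (to be established in the forthcoming \Cref{lem:U()computation}) with the bound $\probcubeintspan{m} \le \upboundcubeintspan{m}$ from \Cref{lem:probintspanupperbound}, in order to write $\ffuncred{\ell}{i}{j}{d}$ as an explicit product of binomial coefficients, factorials, and powers of $p$ and $k$. The constraints $\ell > 3\sqrt{n}$, $j\le i<\criticaldimension$, $d\in\{0,1,2\}$, and $\ell \le i+j+d$ force
\[
i+j \ge \ell - 2 > 3\sqrt{n}-2,
\]
so the regime in question is characterised by $i+j$ lying at least $\sqrt{n}$ units \emph{above} the critical value $\approx 2\sqrt{n}$ attained at the threshold $\criticaldimension$.

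Next, I would exploit this large value of $i+j$ in the $p$-exponent. Substituting $p \le p_* = n^{-2}k^{-(2\sqrt{n}-1)}$ into the factor $p^{(i+j)/2 + 2}$ arising from $\upboundcubeintspan{i}\upboundcubeintspan{j}$ produces a polynomial saving of $n^{-(i+j+4)} \le n^{-3\sqrt{n}-2}$, which alone is super-polynomially small. The remaining task is to check that the combinatorial factors and $k$-powers do not overwhelm this decay. Collecting all $k$-contributions --- namely $k^{n-\ell}$ from the projection count in \Cref{lem:U()computation}, the quadratic growth $k^{(i^2+j^2)/4}$ from \Cref{lem:probintspanupperbound}, and the $k^{-(2\sqrt{n}-1)((i+j)/2+2)}$ from the $k$-part of $p_*$ --- and using the constraints above, one should find that the net $k$-exponent is at most $\bigo{\sqrt{n}}$; since $k\le 2^{\sqrt{n}}$ this amounts to a $k$-factor of at most $2^{\bigo{n}}$, comfortably dwarfed by $n^{-3\sqrt{n}-2}$. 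The surviving factorials and binomials (such as $\binom{n}{\ell}$, $i!j!$, and the within-$H_\ell$ pair count) contribute only $2^{\bigo{\sqrt{n}\log n}}$ and are absorbed similarly. Summing the resulting pointwise bound over the $\bigo{n^4}$ tuples in $\indextriplesum{G}{\criticaldimension}^{(2)}$ then yields the desired $o(1)$ bound.

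The main obstacle will be the delicate cancellation between the quadratic growth $k^{(i^2+j^2)/4}$ from $\upboundcubeintspan{}$ and the linear-in-$(i+j)$ decay $k^{-(i+j)\sqrt{n}}$ from $p$. These two terms are of matching order precisely when $i+j\approx 2\sqrt{n}$, which is why $\criticaldimension = \lfloor 2\sqrt{n}\rfloor -2$ emerges as the critical dimension in \Cref{lem:probintspanupperbound}. In the extended range $\ell > 3\sqrt{n}$ considered here, the linear decay wins: each additional unit of $i+j$ above $2\sqrt{n}$ contributes an extra suppression that, after accounting for the extra combinatorial cost of choosing the larger $H_\ell$, leaves a net surplus in favour of decay. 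The underlying exponent arithmetic parallels that of the inductive step in the proof of \Cref{lem:probintspanupperbound} (with threshold $t=m$), and the present claim follows by carefully adapting those estimates from the critical regime $\ell\approx 2\sqrt n$ to the supercritical regime $\ell > 3\sqrt n$.
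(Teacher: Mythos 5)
Your overall architecture (pointwise bound times the polynomially many admissible tuples) matches the paper's, but the quantitative core of your middle paragraph contains an arithmetic error that breaks the argument. You claim that once the net $k$-exponent is controlled to $\bigo{\sqrt{n}}$, the resulting $k$-factor of at most $2^{\bigo{n}}$ is ``comfortably dwarfed by $n^{-3\sqrt{n}-2}$''. It is not: $n^{-3\sqrt{n}-2}=\expp{-(3\sqrt{n}+2)\ln n}=2^{-\smallo{n}}$, so $2^{Cn}\cdot n^{-3\sqrt{n}-2}\to\infty$ for any constant $C>0$. In other words, the polynomial part $n^{-2}$ of $p_*$ cannot be the engine of the proof when $k$ is allowed to be as large as $2^{\sqrt{n}}$; a $k$-exponent that is merely $\bigo{\sqrt{n}}$ (or even just bounded by $2^{\bigo{n}}$ after exponentiation) is fatal. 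What must actually be shown --- and what the paper shows --- is that the net $k$-exponent is at most $-cn$ for a constant $c>0$. Concretely, the paper bounds $\upboundcubeintspan{i}\upboundcubeintspan{j}\leq \Th{1}n^{2\ell}k^{\frac14 h(i,j)}$ with $h(x,y)=x^2+y^2-(x+y)(4\sqrt n-12)+16\sqrt n-8$, and then maximises this convex paraboloid over the constrained domain $j\le i<\criticaldimension$, $i+j\ge \ell-d\ge 3\sqrt n-1$: the maximum sits at the corner $i\approx 2\sqrt n-3$, $j\approx\sqrt n+2$ and equals roughly $-7n$, giving $\upboundcubeintspan{i}\upboundcubeintspan{j}\le \Th{1}n^{2\ell}k^{-3n/2}$ and hence $\ffuncred{\ell}{i}{j}{d}\le n^{20\sqrt n}k^{-n/3}$. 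The final $o(1)$ then comes from $k^{-n/3}\le 2^{-n/3}$ swallowing all $n^{\bigo{\sqrt n}}$ factors --- the exact opposite of the mechanism you propose.

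A secondary gap: you assert without justification that the $k$-exponent bookkeeping ``should'' work out, but the optimisation over $(i,j)$ is a genuine step. Since the relevant quadratic is convex, its maximum over the polytope of admissible $(i,j)$ is attained at an extreme point, and one must identify that extreme point ($i$ maximal, $j$ minimal subject to $i+j\ge \ell-d$) and evaluate the exponent there; your constraint $i+j>3\sqrt n-2$ alone does not determine the sign of the exponent, because for $i=j\approx \tfrac32\sqrt n$ and for $i\approx 2\sqrt n$, $j\approx \sqrt n$ the values of $i^2+j^2$ differ by order $n$. Your closing paragraph has the right intuition that the linear-in-$(i+j)$ decay from $p$ beats the quadratic growth $k^{(i^2+j^2)/4}$ in the supercritical regime, but as written the proposal does not establish the exponentially negative $k$-exponent that the proof actually hinges on.
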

In order to deal with  the sum over all indices in $\indextriplesum{G}{\criticaldimension{}}^{(1)} $, we first note that $\ffuncred{\ell}{i}{j}{d}$ essentially bounds the expected number of \projections{} of respective dimensions $i$ and $j$ that lie at distance $d$ and together span a \projection{} of dimension $\ell \geq \criticaldimension$ in $G$. We then note that a major role in $2$-neighbour bootstrap percolation on the hypercube is played by \feasible{} sets (see \Cref{def:sequentiallyspanning}), and thus we expect that the dominating term of the left hand side of  \Cref{claim:expec_to_zero_largeterms}  would also stem from  \feasible{} sets and so in the last step of the percolation process a \projection{} of dimension $\criticaldimension-2$ would be joined with a single vertex at distance two: It corresponds to  $\ffuncred{\criticaldimension}{\criticaldimension-2}{0}{2}$. The next lemma shows that the sum is indeed dominated by this term, up to a small multiplicative factor.

\begin{lem}\label{lem:triplesumdomination_part2}
    \begin{align*}
       \sum_{(\ell,i,j,d) \in\indextriplesum{G}{\criticaldimension{}}^{(1)}} \ffuncred{\ell}{i}{j}{d} \leq  5 k^{\frac{1}{2}}\ffuncred{\criticaldimension}{\criticaldimension-2}{0}{2}.
    \end{align*} 
\end{lem}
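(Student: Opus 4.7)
The plan is to bound each summand $\ffuncred{\ell}{i}{j}{d}$ against the reference term $\ffuncred{\criticaldimension}{\criticaldimension-2}{0}{2}$ and to show that the resulting ratios sum to at most $5k^{1/2}$ over $\indextriplesum{G}{\criticaldimension}^{(1)}$. The reference term is extremal for the following reason: the factor $p^{(i+j)/2+2}$ appearing in $\upboundcubeintspan{i}\upboundcubeintspan{j}$ decays sharply in $i+j$ under $p\le p_* = n^{-2}k^{-2\sqrt{n}+1}$, while the admissibility conditions force $i+j \geq \ell - d \geq \criticaldimension-2$, with equality attained exactly at the reference indices. Geometrically, this corresponds to the ``last step'' of bootstrap percolation in which a sequentially spanning structure of dimension $\criticaldimension-2$ absorbs a single further vertex at distance two.

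First I would combine the explicit formula for $\upboundcubeintspan{m}$ from \Cref{lem:probintspanupperbound} with the explicit count of $\union{G}{\criticaldimension}{\ell}{i}{j}{d}$ (referenced after \eqref{eq:perc_unionbound_reformulation} as being computed in detail in Lemma~3.4), and express the ratio $\ffuncred{\ell}{i}{j}{d}/\ffuncred{\criticaldimension}{\criticaldimension-2}{0}{2}$ as a product of three pieces: a $p$-power of order $p^{(i+j-\criticaldimension+2)/2}$, a factorial/binomial piece involving $i!j!/(\criticaldimension-2)!$ together with the positional combinatorics of $(\subcube{i},\subcube{j})$ inside $\subcube{\ell}$, and a $k$-power piece. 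Using $p\leq p_*$, $\ell \leq \lfloor 3\sqrt{n}\rfloor$, and the uniform boundedness of the constants $\upboundcubeintspanconstant{m}$ (stated after the definition of $\upboundcubeintspan{\cdot}$), this ratio would reduce to $n^{-a}k^{-b}$ times a universal constant, where $a,b\geq 0$ vanish only at the reference indices.

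Next I would perform the summation in two stages. First, fix $\ell$ and sum over admissible $(i,j,d)$: because $i!j!$ at fixed $i+j$ is maximised when one coordinate is zero, and because the $p$-power decays geometrically in the excess $i+j-(\criticaldimension-2)$, the contribution for each $\ell$ is bounded by a universal constant times the sub-extremal term $\ffuncred{\ell}{\ell-2}{0}{2}$, with the remaining terms summed via a convergent geometric series in $\min(i,j)=j$. Second, sum over $\ell$ from $\criticaldimension$ to $\lfloor 3\sqrt{n}\rfloor$; each unit increment of $\ell$ introduces an extra free coordinate in the ambient projection (contributing a factor of order $n/(k\ell)$ from $\binom{n}{\ell}k^{n-\ell}$) together with an extra $p^{1/2}$-factor from $\upboundcubeintspan{\ell-2}$. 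Under $p\leq p_*$ these combine into a geometric series whose ratio is $\smallo{1}$, summing to a constant bounded by the claimed $5k^{1/2}$.

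The main obstacle will be the algebraic bookkeeping in the first step: one must verify that the positive $k$-exponents arising in $\union{G}{\criticaldimension}{\ell}{i}{j}{d}$ (from the combinatorial choices of coordinates and labels inside $\subcube{\ell}$) and in $\upboundcubeintspan{i}\upboundcubeintspan{j}$ (from the $k^{(m^2+2m)/4}$ growth of $\upboundcubeintspan{}$) combine with the $k^{-2\sqrt{n}+1}$ contributed by each factor of $p$ to produce an overall $k$-exponent that is \emph{strictly negative} away from the reference indices. Any slack here would invalidate the geometric-series argument, so pinning down the $5k^{1/2}$ prefactor hinges on these cancellations being exact.
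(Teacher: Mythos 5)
Your overall strategy---ratio comparisons against the reference term, domination at fixed $\ell$, then a geometric sum over $\ell$---is the same as the paper's, but the second stage has a genuine gap. You claim the contribution at each fixed $\ell$ is dominated by $\ffuncred{\ell}{\ell-2}{0}{2}$, with each increment of $\ell$ costing a factor of order $(n/(k\ell))\cdot p^{1/2}$. The index $(\ell,\ell-2,0,2)$ is inadmissible for $\ell\ge\criticaldimension+2$, since admissibility requires $i<\criticaldimension$; hence $\union{G}{\criticaldimension}{\ell}{\ell-2}{0}{2}=0$ there and your per-$\ell$ bound has a vanishing right-hand side while the left-hand side is positive. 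Worse, even as a formal quantity the map $\ell\mapsto\ffuncred{\ell}{\ell-2}{0}{2}$ does not decay: the step is not ``an extra $p^{1/2}$'' but $\upboundcubeintspan{\ell-1}/\upboundcubeintspan{\ell-2}=\Theta(\ell)\,p^{1/2}k^{(2\ell-1)/4}(k-1)$, and for $\ell$ in the upper part of $(\criticaldimension,3\sqrt{n}]$ the factor $k^{\ell/2}$ overwhelms $p^{1/2}\le n^{-1}k^{-\sqrt{n}+1/2}$, so the step ratio is $k^{\Omega(\sqrt{n})}$ even after the $n/(k\ell)$ from the binomial. This is precisely the ``slack'' you flag as the main obstacle, and it does occur on your chosen extremal path.

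The repair---which is what the paper does---is to let the cap $i\le\criticaldimension-1$ bind: for $\ell>\criticaldimension$ the dominating \emph{admissible} term is $\ffuncred{\ell}{\criticaldimension-1}{\ell-\criticaldimension-1}{2}$, so that increasing $\ell$ increases $j$ rather than $i$. Since then $j\le\sqrt{n}+O(1)$, one has $\upboundcubeintspan{j+1}/\upboundcubeintspan{j}\le k^{-\sqrt{n}/3}$, which beats the factor $\approx n$ coming from the count $U$, and the sum over $\ell$ collapses onto $\ell=\criticaldimension+1$; the $\ell=\criticaldimension$ slice is handled separately by reducing to the within-projection domination lemma. Relatedly, your assertion that every ratio against the reference term is $n^{-a}k^{-b}$ with $a,b\ge0$ fails for $(\criticaldimension+1,\criticaldimension-1,0,2)$: its ratio to $\ffuncred{\criticaldimension}{\criticaldimension-2}{0}{2}$ is bounded only by $k^{1/2}$, not by a constant, and this single term is the reason the statement carries the factor $k^{1/2}$ at all.
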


Using simple calculations we can relate the value of $\ffuncred{\criticaldimension}{\criticaldimension-2}{0}{2}$ with $\upboundcubeintspan{\criticaldimension}$.
\begin{claim}\label{claim:ffuncSmaller_Upboundcubeintspan}
    \begin{align*}
      \ffuncred{\criticaldimension}{\criticaldimension-2}{0}{2} \leq \binom{n}{\criticaldimension} k^{n-\criticaldimension}\upboundcubeintspan{\criticaldimension}.
    \end{align*}
\end{claim}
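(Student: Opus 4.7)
The plan is to expand $\ffuncred{\criticaldimension}{\criticaldimension-2}{0}{2}$ via its definition in \eqref{eq:def:ffunc}, bound the combinatorial count $\union{G}{\criticaldimension}{\criticaldimension}{\criticaldimension-2}{0}{2}$ directly, and then use the explicit formula for $\Phi$ from \eqref{def:upboundcubeintspan} to verify that everything collapses to the desired inequality.

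First, since $\upboundcubeintspan{0}=p$ by \eqref{def:upboundcubeintspan}, I rewrite
$$\ffuncred{\criticaldimension}{\criticaldimension-2}{0}{2} = \union{G}{\criticaldimension}{\criticaldimension}{\criticaldimension-2}{0}{2}\cdot \upboundcubeintspan{\criticaldimension-2}\cdot p.$$
Next I count candidate quadruples: a \projection{} $\subcube{\criticaldimension-2}$ is specified by its set of $\criticaldimension-2$ non-trivial coordinates ($\binom{n}{\criticaldimension-2}$ choices) together with fixed values in the remaining $n-\criticaldimension+2$ coordinates ($k^{n-\criticaldimension+2}$ choices); a vertex $\subcube{0}$ at distance exactly $2$ from $\subcube{\criticaldimension-2}$ must disagree with those fixed values at exactly two of the $n-\criticaldimension+2$ trivial coordinates (contributing $\binom{n-\criticaldimension+2}{2}(k-1)^2$ choices) while taking an arbitrary value in each of the $\criticaldimension-2$ non-trivial coordinates ($k^{\criticaldimension-2}$ choices); finally $\subcube{\criticaldimension}$ is uniquely determined as $\closure{\subcube{\criticaldimension-2}\cup\subcube{0}}$ by \ref{Property:witquad:span}. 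This yields
$$\union{G}{\criticaldimension}{\criticaldimension}{\criticaldimension-2}{0}{2} \leq \binom{n}{\criticaldimension-2}\binom{n-\criticaldimension+2}{2}(k-1)^2 k^n.$$

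Substituting the formula \eqref{def:upboundcubeintspan} for $\Phi$ gives
$$\frac{\upboundcubeintspan{\criticaldimension-2}\cdot p}{\upboundcubeintspan{\criticaldimension}} = \frac{\upboundcubeintspanconstant{\criticaldimension-2}}{\upboundcubeintspanconstant{\criticaldimension}}\cdot \frac{2}{\criticaldimension(\criticaldimension-1)(k-1)^2 k^{\criticaldimension}},$$
and using the elementary identity $\binom{n}{\criticaldimension-2}\binom{n-\criticaldimension+2}{2}=\tfrac{\criticaldimension(\criticaldimension-1)}{2}\binom{n}{\criticaldimension}$, all factors of $k$, factorials, powers of $2$, and binomial ratios telescope exactly, leaving
$$\ffuncred{\criticaldimension}{\criticaldimension-2}{0}{2} \leq \frac{\upboundcubeintspanconstant{\criticaldimension-2}}{\upboundcubeintspanconstant{\criticaldimension}}\binom{n}{\criticaldimension}k^{n-\criticaldimension}\upboundcubeintspan{\criticaldimension}.$$

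To conclude, I observe that the recursive definition of $\upboundcubeintspanconstant{m}$ in \Cref{lem:probintspanupperbound} yields $\upboundcubeintspanconstant{m}\geq\upboundcubeintspanconstant{m-1}$ for every $m\geq 15$, since both factors $(1+k^{-(m-11)/2})$ and $(1+4/n)$ exceed $1$. For $n$ sufficiently large, $\criticaldimension=\rounddown{2\sqrt{n}}-2\geq 17$, so applying this twice gives $\upboundcubeintspanconstant{\criticaldimension-2}\leq\upboundcubeintspanconstant{\criticaldimension}$, and the claim follows. The entire argument is essentially a bookkeeping exercise; the only mild subtlety is tracking the powers of $k$ across the two occurrences of $\Phi$, which is why I prefer to cleanly factor out $\upboundcubeintspan{\criticaldimension}$ before comparing.
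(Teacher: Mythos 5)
Your proof is correct and follows essentially the same route as the paper: the paper factors $\ffuncred{\criticaldimension}{\criticaldimension-2}{0}{2}$ through a $\criticaldimension$-dimensional projection via \eqref{eq:relation:ffuncG_and_ffuncH} and then invokes the already-established identity \eqref{eq:proof:upboundcubeintspan:eq1}, whereas you carry out the same expansion of $U$, the same telescoping of $\upboundcubeintspan{\criticaldimension-2}\upboundcubeintspan{0}$ against $\upboundcubeintspan{\criticaldimension}$, and the same monotonicity of $\upboundcubeintspanconstant{m}$ directly in $G$. The bookkeeping (count of $U$, cancellation of the $p$, $k$, factorial and binomial factors, and $\upboundcubeintspanconstant{\criticaldimension-2}\leq \upboundcubeintspanconstant{\criticaldimension}$ for $\criticaldimension\geq 17$) all checks out.
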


Now, combining \Cref{lem:triplesumdomination_part2} and  \Cref{claim:ffuncSmaller_Upboundcubeintspan} we  obtain 
\begin{align}\label{eq:percolation:bottleneck}
    \sum_{(\ell,i,j,d) \in\indextriplesum{G}{\criticaldimension{}}^{(1)}} \ffuncred{\ell}{i}{j}{d}  \overset{Lem.~\ref{lem:triplesumdomination_part2}}{\leq} 5 k^{\frac{1}{2}}\ffuncred{\criticaldimension}{\criticaldimension-2}{0}{2} \overset{C.~ \ref{claim:ffuncSmaller_Upboundcubeintspan}}{\leq} 5\binom{n}{\criticaldimension} k^{n-\criticaldimension+\frac{1}{2}}\upboundcubeintspan{\criticaldimension}.
\end{align} 
Thus, \eqref{eq:percbound:splitsum}, \Cref{claim:expec_to_zero_largeterms} and \eqref{eq:percolation:bottleneck} yield 
\begin{align*}
    \prob{G \text{ percolates}}     \overset{\eqref{eq:percbound:splitsum}}&{ \leq}
      \sum_{(\ell,i,j,d) \in \indextriplesum{G}{\criticaldimension}^{(1)}}  \ffuncred{\ell}{i}{j}{d} + \sum_{(\ell,i,j,d) \in \indextriplesum{G}{\criticaldimension}^{(2)}}  \ffuncred{\ell}{i}{j}{d}\\
\overset{C. \ref{claim:expec_to_zero_largeterms}, \eqref{eq:percolation:bottleneck}}&{ \leq}5 \binom{n}{\criticaldimension} k^{n-\criticaldimension+\frac{1}{2}}\upboundcubeintspan{\criticaldimension} +\smallo{1}. 
\end{align*}
Summing up, we obtain an upper bound on the percolation probability in terms of the value of the function $\upboundcubeintspan{\cdot}$ at the critical dimension $\criticaldimension$.
\begin{lem}\label{lem:percolation:bottleneck}
\begin{align*}
     \prob{G \text{ percolates}} \leq 5 \binom{n}{\criticaldimension} k^{n-\criticaldimension+\frac{1}{2}}\upboundcubeintspan{\criticaldimension} +\smallo{1}.
\end{align*}
\end{lem}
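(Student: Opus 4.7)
The proof will be essentially a bookkeeping step that combines the auxiliary results stated just before the lemma, since all the heavy technical work has been encapsulated in \Cref{lem:perc_unionbound_reformulation}, \Cref{lem:probintspanupperbound}, \Cref{claim:expec_to_zero_largeterms}, \Cref{lem:triplesumdomination_part2}, and \Cref{claim:ffuncSmaller_Upboundcubeintspan}. The plan is simply to start from the union bound over \witquad{}s with threshold $t = \criticaldimension = \lfloor 2\sqrt{n}\rfloor - 2$ and then reduce the resulting sum to the single dominating term $\ffuncred{\criticaldimension}{\criticaldimension-2}{0}{2}$ by isolating the contribution from large values of $\ell$.

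First I would invoke \Cref{lem:perc_unionbound_reformulation} with $t=\criticaldimension$ to bound $\prob{G\text{ percolates}}$ by a sum of the form $\sum_{(\ell,i,j,d) \in \indextriplesum{G}{\criticaldimension}} \union{G}{\criticaldimension}{\ell}{i}{j}{d} \probcubeintspan{i}\probcubeintspan{j}$, and then plug in \Cref{lem:probintspanupperbound} to replace each factor $\probcubeintspan{i},\probcubeintspan{j}$ by $\upboundcubeintspan{i},\upboundcubeintspan{j}$; this replacement is valid because every admissible index $(\ell,i,j,d)\in\indextriplesum{G}{\criticaldimension}$ satisfies $i,j<\criticaldimension$. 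At this point the bound reads
\begin{align*}
\prob{G\text{ percolates}} \leq \sum_{(\ell,i,j,d) \in \indextriplesum{G}{\criticaldimension}} \ffuncred{\ell}{i}{j}{d},
\end{align*}
using the abbreviation introduced in \eqref{eq:def:ffunc}.

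Next I would split the sum according to the partition $\indextriplesum{G}{\criticaldimension} = \indextriplesum{G}{\criticaldimension}^{(1)} \cup \indextriplesum{G}{\criticaldimension}^{(2)}$ defined in \eqref{eq:def:admissibleindices_part_one_and_two}. For the part with $\ell > 3\sqrt{n}$, \Cref{claim:expec_to_zero_largeterms} immediately gives that $\sum_{(\ell,i,j,d)\in \indextriplesum{G}{\criticaldimension}^{(2)}} \ffuncred{\ell}{i}{j}{d} = o(1)$. For the part with $\ell \leq 3\sqrt{n}$, I would apply \Cref{lem:triplesumdomination_part2} to bound the sum by $5k^{1/2}\ffuncred{\criticaldimension}{\criticaldimension-2}{0}{2}$, and then \Cref{claim:ffuncSmaller_Upboundcubeintspan} to bound this single term further by $5\binom{n}{\criticaldimension} k^{n-\criticaldimension + 1/2}\upboundcubeintspan{\criticaldimension}$. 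Adding the two contributions yields the claimed inequality.

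Since all the nontrivial estimates are imported from the auxiliary lemmas, there is no real obstacle within the proof of \Cref{lem:percolation:bottleneck} itself beyond verifying that these pieces glue together with matching constants and admissible index ranges. The difficulty genuinely lies in the auxiliary statements: in particular, \Cref{lem:probintspanupperbound} (the inductive hierarchy bound on $\probcubeintspan{m}$) and \Cref{lem:triplesumdomination_part2} (showing that the ``last step'' in the percolation process, in which a $(\criticaldimension-2)$-dimensional \projection{} absorbs a single vertex at distance two, dominates the entire sum). Those will require the careful comparison of ratios of consecutive terms sketched after \eqref{eq:triplesumbound}, but for the present lemma only the clean assembly described above is required.
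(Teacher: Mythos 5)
Your proposal is correct and follows exactly the paper's own argument: apply \Cref{lem:perc_unionbound_reformulation} with $t=\criticaldimension$, substitute the bound from \Cref{lem:probintspanupperbound} (valid since $j\leq i<\criticaldimension$ for all admissible indices), split the sum via \eqref{eq:def:admissibleindices_parts}, and finish with \Cref{claim:expec_to_zero_largeterms}, \Cref{lem:triplesumdomination_part2} and \Cref{claim:ffuncSmaller_Upboundcubeintspan}. Nothing is missing; this is the same clean assembly the paper performs in the displays \eqref{eq:perc_unionbound_criticaldimension} through \eqref{eq:percolation:bottleneck}.
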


We observe that the right hand side of the inequality in \Cref{lem:percolation:bottleneck} is  an upper bound on the number of {\em internally spanned \projections{} of dimension $\criticaldimension$} in $G$, up to a factor of order $\Th{\sqrt{k}}$. Indeed, there are exactly $\binom{n}{\criticaldimension}k^{n-\criticaldimension}$ ways of choosing a projection of dimension $\criticaldimension$ in $G$ and for each such projection $\subcube{}$, the probability that $\subcube{}$ is internally spanned is bounded from above  by $\upboundcubeintspan{\criticaldimension}$. Thus, \Cref{lem:percolation:bottleneck} shows that the bottleneck for percolation in the Hamming graph lies in finding an internally spanned \projection{} of dimension $\criticaldimension$. On the other hand, we will see in \Cref{lem:expec_to_inf} that as soon as an internally spanned \projection{} of dimension $\criticaldimension$ appears in $G$, the whole graph percolates \whp{}. To complete the proof of \crefitemintheorem{thm:main:threshold}{thm:main:threshold:lower}, we show that for $p \leq p_*$ the expected number of internally spanned \projections{} of dimension $\criticaldimension$ (even with an additional factor $\sqrt{k}$ ) tends to zero. 
\begin{lem}\label{lem:expec_to_zero_smallterm}
    \begin{align*}
         \binom{n}{\criticaldimension} k^{n-\criticaldimension+\frac{1}{2}}\upboundcubeintspan{\criticaldimension} = o(1).
    \end{align*}
\end{lem}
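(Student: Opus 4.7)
\textbf{Proof plan for \Cref{lem:expec_to_zero_smallterm}.} The plan is a direct substitution: expand $\upboundcubeintspan{\criticaldimension}$ using its definition \eqref{def:upboundcubeintspan}, plug in the upper bound $p \leq p_* = n^{-2}k^{-2\sqrt{n}+1}$, and then verify that the $k$-powers conspire to almost entirely cancel, leaving a bound that is $\smallo{1}$ thanks to the constraint $k\leq 2^{\sqrt n}$. To simplify bookkeeping it will be convenient to write $r := \criticaldimension/2 + 1$, so that $\criticaldimension = 2r-2$ and, since $\criticaldimension = \rounddown{2\sqrt{n}}-2$, we have $\sqrt{n} - \tfrac{1}{2} < r \leq \sqrt{n}$ and therefore $(r-\sqrt{n})^2 \leq \tfrac{1}{4}$. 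In this notation the quadratic exponent in \eqref{def:upboundcubeintspan} becomes $(\criticaldimension^{\,2}+2\criticaldimension)/4 = r(r-1) = r^2 - r$.

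First I would collect the powers of $k$ contributed by the three factors $k^{n-\criticaldimension+1/2}$, $(k-1)^{\criticaldimension}\leq k^{\criticaldimension} = k^{2r-2}$, and $k^{r^2-r}$, obtaining the combined exponent $n + \tfrac{1}{2} + r^2 - r$. Using $p^{r}\leq n^{-2r}k^{(-2\sqrt n+1)r}$, the total $k$-exponent becomes
\begin{align*}
n+\tfrac{1}{2}+r^2-r + r(1-2\sqrt{n}) \;=\; \tfrac{1}{2} + (r-\sqrt{n})^2 \;\leq\; \tfrac{3}{4},
\end{align*}
which is the exact cancellation that singles out $\criticaldimension$ as the critical dimension. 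This is the crucial step and, I expect, the only computational subtlety: everything else is routine once this identity has been observed.

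Next I would handle the $n$-powers. The factors $\binom{n}{\criticaldimension}$ and $\criticaldimension!$ combine into the falling factorial, giving $\binom{n}{\criticaldimension}\,\criticaldimension! = n(n-1)\cdots(n-\criticaldimension+1) \leq n^{\criticaldimension}$. Since $p^{r}$ contributes $n^{-2r}$ and $\criticaldimension-2r = -2$, the net $n$-power is exactly $n^{-2}$. Combining these estimates with the fact that $\upboundcubeintspanconstant{\criticaldimension}$ is bounded by an absolute constant (the excerpt points to a separate lemma bounding $\upboundcubeintspanconstant{m}$ uniformly), I obtain
\begin{align*}
\binom{n}{\criticaldimension} k^{n-\criticaldimension+\frac{1}{2}}\upboundcubeintspan{\criticaldimension} \;\leq\; O\!\left( n^{-2}\, 2^{-\criticaldimension/2}\, k^{3/4}\right).
\end{align*}

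Finally I would use the hypothesis $k\leq 2^{\sqrt{n}}$, giving $k^{3/4}\leq 2^{3\sqrt{n}/4}$, together with $2^{-\criticaldimension/2} \leq 2^{-\sqrt{n}+3/2} = O(2^{-\sqrt n})$. Multiplying, the exponential factor becomes $2^{-\sqrt{n}/4}$, so the whole expression is $O(n^{-2}\,2^{-\sqrt{n}/4}) = \smallo{1}$, completing the proof. The only place one must be careful is in tracking the floor in $\criticaldimension = \rounddown{2\sqrt{n}}-2$, which is handled uniformly by the estimate $|r-\sqrt n|\leq \tfrac{1}{2}$.
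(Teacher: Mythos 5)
Your proposal is correct and follows essentially the same route as the paper: expand $\upboundcubeintspan{\criticaldimension}$, bound $(k-1)^{\criticaldimension}\le k^{\criticaldimension}$, substitute $p\le p_*$, observe that the total $k$-exponent collapses to $\tfrac12+(r-\sqrt n)^2\le\tfrac34$ (the paper phrases this as the convex parabola $g$ evaluated at $2\sqrt n-3$, which is the same computation), and finish with the leftover $2^{-\criticaldimension/2}$ factor against $k\le 2^{\sqrt n}$. Your completing-the-square bookkeeping via $r=\criticaldimension/2+1$ is a slightly cleaner presentation of the identical estimate.
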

Now we are ready to prove \crefitemintheorem{thm:main:threshold}{thm:main:threshold:lower}.

\begin{proof}[Proof of \crefitemintheorem{thm:main:threshold}{thm:main:threshold:lower}]
From Lemmas \ref{lem:percolation:bottleneck} and \ref{lem:expec_to_zero_smallterm} we have
 \begin{align*}
     \prob{G \text{ percolates}} =\smallo{1},
\end{align*}
as desired. 
\end{proof}

\section{The upper threshold: Proof of \Cref{thm:main:threshold} \ref{thm:main:threshold:upper}}\label{sec:UpperBound}
In this section we will prove \Cref{thm:main:threshold} \ref{thm:main:threshold:upper}. Along the proof we will derive several auxiliary results, whose proofs are deferred to \Cref{sec:proof:upper}. 
We assume 
\begin{align*}
    p \geq p^* =200 n^{-2}k^{-2\sqrt{n}+1}
\end{align*}
and will show
 \begin{align*}
     \prob{G \text{ percolates}} =  \prob{ [\boldsymbol{\initinfec{}}_p]=V }  = 1+\smallo{1}.
\end{align*}

In order to show that \whp{} $G$ percolates it suffices to find a single percolating set in $G$ that is initially infected, contrary to the lower threshold, where we had to consider all possible percolating sets. It turns out the correct type of  sets to consider are \emph{sequentially spanning sets} (see \Cref{def:sequentiallyspanning}). One intuitive reason why these sets are essential to the percolation process is that they form {\em minimum} percolating sets and are thus more likely to be initially infected than larger percolating sets. To keep track of the order on the vertices of a sequentially spanning set (as given by \Cref{def:sequentiallyspanning}), we will consider sequences instead of sets. 

\begin{definition}\label{def:seqspanningsequences}
    Let $\ell \in \N$. 
    
    \begin{enumerate}
        \item 
A sequence $\seq{v}{\ell} = (\vertexvec{v}_{0},\ldots, \vertexvec{v}_{\ell})$ of vertices in $V$ is called \textbf{\feasible{}}, if $(\vertexvec{v}_{0},\ldots, \vertexvec{v}_{\ell})$ satisfies Property \eqref{Prop:sequentiallyspanning}.
The \emph{\seqsize{}} of $\ \seq{v}{\ell} = (\vertexvec{v}_{0},\ldots, \vertexvec{v}_{\ell})$ is defined as $\ell$.

\item A \feasible{} sequence $\seq{v}{\ell} = (\vertexvec{v}_{0},\ldots, \vertexvec{v}_{\ell})$ is called \textbf{infected}, if $\{\seq{v}{\ell}\} \coloneqq \{\vertexvec{v}_{0},\ldots, \vertexvec{v}_{\ell}\} \subseteq \boldsymbol{\initinfec{}}_p$.

\item For two \feasible{} sequences $\seq{v}{\ell},\seq{w}{\ell} $ we define $\seq{v}{\ell} \cap \seq{w}{\ell} \coloneqq \{\seq{v}{\ell}\} \cap \{\seq{w}{\ell}\}$. 
    \end{enumerate}
\end{definition}
Note that the \seqsize{} of $\seq{v}{\ell} = (\vertexvec{v}_{0},\ldots, \vertexvec{v}_{\ell})$ is defined in such a way that a \feasible{} sequence of \seqsize{} $\ell$ spans a \projection{} of dimension $2 \ell$ (see \Cref{lem:seqspanningspansprojection}).

Our aim is to find a sufficiently large \emph{infected} \feasible{} sequence in $G$. To that end, let us denote by $$\seqset{\ell} = \seqset{\ell}(G) \coloneqq \setbuilder{\seq{v}{\ell} = (\vertexvec{v}_{0},\ldots, \vertexvec{v}_{\ell})}{\seq{v}{\ell}  \text{ is \feasible{}}} $$
the set of all \feasible{} sequences of \seqsize{} $\ell$ in $G$ and by $\seqspannum{\ell}$ the number of all \emph{infected} \feasible{} sequences of \seqsize{} $\ell$.  Then, for all $\ell \in \N_0$ we obtain
\begin{align*}
    \seqspannum{\ell} = \sum_{\seq{v}{\ell}  \in \seqset{\ell}} \indicator{\{\seq{v}{\ell}\} \subseteq \boldsymbol{\initinfec{}}_p}.
\end{align*}
A first step towards our goal is to count the number of \feasible{} sequences. Because a \feasible{} sequence of \seqsize{} zero consists of a single vertex, it clearly holds that 
$$|\seqset{0}| = \abs{V}= k^n.$$
In the next lemma we explicitly compute the number of \feasible{} sequences of given length by successively extending such sequences.
\begin{lem}\label{lem:seqspancount}
    For $1 \leq \ell \leq \rounddown{\frac{n}{2} }$, let $\seq{v}{\ell-1} =(\vertexvec{v}_0,\ldots,\vertexvec{v}_{\ell-1}) \in \seqset{\ell-1}$ be a \feasible{} sequence of \seqsize{} $\ell-1$. Denote by
    \begin{align*}
        \seqspanconst{\ell} \coloneqq \left| \setbuilder{\vertexvec{w} \in V}{(\vertexvec{v}_0,\ldots,\vertexvec{v}_{\ell-1},\vertexvec{w}) \in \seqset{\ell} } \right|
    \end{align*}
 the number of vertices in $V$ that extend $\seq{v}{\ell-1}$ to a \feasible{} sequence of \seqsize{} $\ell$. 
    Then, for $1 \leq \ell \leq \rounddown{\frac{n}{2}}$,
    \begin{align*}
 \seqspanconst{\ell} = \binom{n-2\ell+2}{2}(k-1)^2k^{2\ell-2} 
    \end{align*}
       
     and for $0 \leq \ell \leq \rounddown{\frac{n}{2}}$,
         \begin{align}
                 |\seqset{\ell}| =  k^n\prod_{j=1}^{\ell} \seqspanconst{j} = \frac{n!}{(n-2\ell)!}  (k-1)^{2\ell} 2^{-\ell} k^{n+\ell^2-\ell}.\label{eq:Sell}
         \end{align}
\end{lem}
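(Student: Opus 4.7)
My plan is to handle the two claims in the lemma sequentially: first compute the extension count $\seqspanconst{\ell}$ by exploiting the structure of the closure of any \feasible{} prefix as a \projection{}, and then derive $|\seqset{\ell}|$ by iterating the extension count and simplifying the resulting product.

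For $\seqspanconst{\ell}$: fix a prefix $\seq{v}{\ell-1} = (\vertexvec{v}_0,\ldots,\vertexvec{v}_{\ell-1}) \in \seqset{\ell-1}$. By \Cref{lem:seqspanningspansprojection}, the closure $\subcube{} := \closure{\{\vertexvec{v}_0,\ldots,\vertexvec{v}_{\ell-1}\}}$ is a \projection{} of $G$ of dimension $2(\ell-1)$. Writing $\subcube{} = \square_{i=1}^n \subcube{}^{(i)}$, let $T \subseteq [n]$ be the set of coordinates for which $\subcube{}^{(i)}$ is a single vertex, say $\{u_i\}$; then $|T| = n - 2\ell + 2$, while $\subcube{}^{(i)} = K_k$ for $i \notin T$. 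The crucial observation is that $\dist{\vertexvec{w}}{\subcube{}}$ equals the number of $i \in T$ with $w_i \neq u_i$, since in each non-trivial coordinate the minimizing $\vertexvec{y} \in \subcube{}$ can be chosen with $y_i = w_i$ at no cost. Hence $(\vertexvec{v}_0,\ldots,\vertexvec{v}_{\ell-1},\vertexvec{w}) \in \seqset{\ell}$ if and only if exactly two coordinates of $T$ disagree with the corresponding $u_i$. Counting such $\vertexvec{w}$ gives $\binom{n-2\ell+2}{2}(k-1)^2 k^{2\ell-2}$, corresponding to the choice of disagreement positions in $T$, the $k-1$ non-matching values in each, and the free choice among $k$ values in each of the $2(\ell-1)$ non-trivial coordinates.

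Because this formula depends only on $\ell$ (and not on the particular prefix), a straightforward induction gives $|\seqset{\ell}| = |\seqset{\ell-1}| \cdot \seqspanconst{\ell}$ and hence $|\seqset{\ell}| = k^n \prod_{j=1}^\ell \seqspanconst{j}$, using $|\seqset{0}| = k^n$. For the closed form, the only non-trivial manipulation is the telescoping
\[
\prod_{j=1}^\ell \binom{n-2j+2}{2} = \prod_{j=1}^\ell \frac{(n-2j+2)(n-2j+1)}{2} = \frac{n!}{(n-2\ell)!\, 2^\ell},
\]
since the numerators combine into the falling factorial $n(n-1)\cdots(n-2\ell+1)$; the remaining factors simplify to $(k-1)^{2\ell} k^{\ell(\ell-1)}$, and multiplication by $k^n$ yields the asserted expression.

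The only step that needs genuine justification is the structural identification of $\dist{\vertexvec{w}}{\subcube{}}$ with the number of disagreements in $T$, but this follows directly from the coordinate-wise definition of the Hamming distance together with the product structure of \projections{}. No probabilistic argument enters the proof, and everything else is bookkeeping.
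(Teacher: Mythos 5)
Your proof is correct and follows essentially the same route as the paper: identify the closure of the prefix as a projection of dimension $2\ell-2$ via \Cref{lem:seqspanningspansprojection}, count the vertices at distance exactly two from it coordinate-wise, and then telescope the product $|\seqset{\ell}| = k^n\prod_{j=1}^{\ell}\seqspanconst{j}$. Your explicit justification that the distance to the projection equals the number of disagreements in the trivial coordinates is a detail the paper leaves implicit, but the argument is the same.
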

Observe that by the definitions of $\seqset{\ell}$ and $\seqspanconst{\ell}$,   we obtain that for $1 \leq \ell \leq \rounddown{\frac{n}{2}}$, 
\begin{align}\label{eq:SellCell0}
|\seqset{\ell}| = |\seqset{\ell-1}| \cdot \seqspanconst{\ell}.
\end{align}

Using \Cref{lem:seqspancount} we already have a good grip on  the expected number of {\em \feasible{} sequences} of given length, because 
for any sequence $\seq{v}{\ell} \in \seqset{\ell}$ it holds that $ \prob{\indicator{\{\seq{v}{\ell}\} \subseteq \boldsymbol{\initinfec{}}_p}} = p^{\ell+1}$ and thus 
\begin{align}
    \expec{\seqspannum{\ell}} = p^{\ell+1}|\seqset{\ell}|. \label{eq:expec(seqspannum)}
\end{align}

Next we aim to apply the second moment method to show that \whp{} there exists an {\em infected \feasible{} sequence} of length $\rounddown{\frac{n}{2}}$. Note that we cannot hope to find a \feasible{} sequence of \seqsize{} $\ell$ which is strictly larger than  $\rounddown{\frac{n}{2}}$,  because by \Cref{lem:seqspanningspansprojection} such a sequence would span a \projection{} of dimension  $2\ell >n =\dimension{G}$ -- a contradiction. Thus we set  
\begin{align*}
    &\criticalsizesequential \coloneqq \rounddown{\frac{n}{2}}; \\
    &\D{\ell} \coloneqq \sum_{\setbuilder{\{\seq{v}{\criticalsizesequential},\seq{w}{\criticalsizesequential} \} \subseteq \seqset{\criticalsizesequential}}{\seq{v}{\criticalsizesequential} \cap \seq{w}{\criticalsizesequential}\neq \emptyset}} \prob{\indicator{\left(\{\seq{v}{\criticalsizesequential}\} \subseteq \boldsymbol{\initinfec{}}_p \right)\land \left(\{\seq{w}{\criticalsizesequential}\} \subseteq \boldsymbol{\initinfec{}}_p\right)}}
\end{align*}
and note that 
by \Cref{lem:secondmoment} it suffices to show  
    \begin{align}   &\expec{\seqspannum{\criticalsizesequential}}  \xrightarrow{} \infty;\label{eq:secondmoment:expeccond}\\
   &  \frac{\D{\criticalsizesequential}}{ \expec{\seqspannum{\criticalsizesequential}}^2}  \xrightarrow{} 0.\label{eq:secondmoment:variancecond}
\end{align}

Like in most applications of the second moment method, the harder part is to prove \eqref{eq:secondmoment:variancecond}, which requires us to compute the correlation between different infected \feasible{} sequences. For $\{\seq{v}{\ell},\seq{w}{\ell} \} \subseteq \seqset{\ell}$ it holds that 
\begin{align}
\prob{\indicator{\left(\{\seq{v}{\ell}\} \subseteq \boldsymbol{\initinfec{}}_p\right) \land \left(\{\seq{w}{\ell}\} \subseteq \boldsymbol{\initinfec{}}_p\right)}} = p^{2(\ell+1)- \left|\seq{v}{\ell}\cap \seq{w}{\ell}\right|}.\label{eq:secondmoment:prob}
\end{align}
Thus, the proof boils down to bounding the number of pairs of \feasible{} sequences that share a certain amount of vertices. For each $\ell \in [\criticalsizesequential]\cup \set{0}$ and each $i \in [\ell+1]$ we denote by 
\begin{align}
    \seqoverlap{\ell}{i} \coloneqq \setbuilder{\{\seq{v}{\ell},\seq{w}{\ell} \} \subseteq \seqset{\ell}}{\left|\seq{v}{\ell}\cap \seq{w}{\ell}\right| = i}\label{eq:def:seqoverlap}
\end{align}
the number of pairs of \feasible{} sequences of \seqsize{} $\ell$ that intersect exactly in $i$ vertices.  
To give some intuition on what we expect the size of  $\seqoverlap{\ell}{i}$ to be, consider the following way of constructing intersecting \feasible{} sequences: Start both sequences with the same initial sequence of length $i-1$ (with the same $i$ vertices) and then use arbitrary vertices to extend them. Clearly, there are $|\seqset{i-1}|$ ways for choosing an initial sequence of length $i-1$. Furthermore, by \eqref{eq:SellCell0}, we have $\seqspanconst{j}=\frac{|\seqset{j}|}{|\seqset{j-1}|}$ and thus there are 
$ \prod_{j=i+1}^{\ell} \seqspanconst{j}^2  = \left(\frac{|\seqset{\ell}|}{|\seqset{i-1}|}\right)^2$ ways to extend two sequences of length $i-1$ to two sequences of length $\ell$. Note that it is possible that the resulting sequences overlap in even more than $i$ vertices, which happens if one of the extensions also appeared previously in the other sequence, however this only reduces the number of possible extensions by an insignificant amount. Thus, we expect that $|\seqoverlap{\ell}{i}| \geq \frac{|\seqset{\ell}|^2}{|\seqset{i-1}|}$ and it turns out that this naive bound is almost tight, as shown in the following lemma. 

\begin{lem}\label{lem:bound:seqoverlap}
  For $\ell \in [\criticalsizesequential]\cup\{0\}$ and $i \in [\ell+1]$ we have
    \begin{align}
       \left|\seqoverlap{\ell}{i} \right| \leq \Th{1} 8^i(\ell+1)^3 \frac{|\seqset{\ell}|^2}{|\seqset{i-1}|}.\label{eq:bound:seqoverlap}
    \end{align}
\end{lem}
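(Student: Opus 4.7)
The approach is to count ordered pairs $(\seq{v}{\ell}, \seq{w}{\ell})$ with $|\seq{v}{\ell} \cap \seq{w}{\ell}| = i$ and then divide by two. First I fix $\seq{v}{\ell}$, which contributes $|\seqset{\ell}|$ choices. I then record the shared vertices by specifying the set $S \subseteq \{\seq{v}{\ell}\}$ of size $i$ (equivalently, the positions $P_v \subseteq \{0,\ldots,\ell\}$ they occupy in $\seq{v}{\ell}$), the positions $P_w \subseteq \{0,\ldots,\ell\}$ they occupy in $\seq{w}{\ell}$, and the bijection $\sigma : P_v \to P_w$ matching them. The remaining $\ell+1-i$ non-shared positions of $\seq{w}{\ell}$ must form a valid extension of the partial sequence and therefore contribute at most $\prod_{j\notin P_w}\seqspanconst{j}$ choices, where I use the convention $\seqspanconst{0}=|V|=k^n$. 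By the heuristic in the statement, the term corresponding to $P_v = P_w = \{0,\ldots,i-1\}$ with the identity matching already gives the correct order of magnitude: combining \Cref{lem:seqspancount} yields
\begin{align*}
|\seqset{i-1}| \cdot \left(\prod_{j=i}^{\ell}\seqspanconst{j}\right)^{\!2} = \frac{|\seqset{\ell}|^2}{|\seqset{i-1}|}.
\end{align*}
Because $\seqspanconst{j}/\seqspanconst{j-1}$ is bounded below by a positive power of $k$ throughout the relevant range, the elementary-symmetric sum $\sum_{P_w}\prod_{j\notin P_w}\seqspanconst{j}$ is dominated by this maximal term, losing only a polynomial factor $(\ell+1)^{\bigo{1}}$.

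The main technical task is then to control the triples $(S,P_w,\sigma)$ themselves: the naive bound $\binom{\ell+1}{i}^2 \cdot i!$ is much larger than the claimed $\bigo{8^i(\ell+1)^3}$, so the proof must exploit further structure. The key observation is that for $\seq{w}{\ell}$ to be sequentially spanning, each shared vertex placed at some position $p\in P_w$ must lie at distance exactly two from the projection spanned by the previous $p$ vertices of $\seq{w}{\ell}$, and an $S$-vertex is further constrained to lie in the $2\ell$-dimensional projection spanned by $\seq{v}{\ell}$. Using the fact that an extension at distance two in the Hamming graph opens exactly two new coordinates at the next step, I would walk through the positions of $\seq{w}{\ell}$ and charge, at each step, a constant combinatorial cost (at most $8$) for the choice of whether this position is in $P_w$ and, if so, which $S$-vertex occupies it. This yields an overall factor of $8^i$, together with $(\ell+1)^{\bigo{1}}$ absorbed into the global choices of $P_v$, the starting position of shared vertices in $\seq{w}{\ell}$, and the passage from $\sum_{P_w}\prod_{j\notin P_w}\seqspanconst{j}$ to its dominant term.

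Putting the two ingredients together,
\begin{align*}
\left|\seqoverlap{\ell}{i}\right| \leq \frac{1}{2}\cdot|\seqset{\ell}|\cdot \Th{1}\cdot 8^i(\ell+1)^3\cdot\frac{|\seqset{\ell}|}{|\seqset{i-1}|}= \Th{1}\, 8^i(\ell+1)^3\frac{|\seqset{\ell}|^2}{|\seqset{i-1}|},
\end{align*}
which is the claimed bound. The hardest step is the second one: rigorously reducing the combinatorial factor from $\binom{\ell+1}{i}^2\, i!$ to $\bigo{8^i(\ell+1)^3}$ via the distance-two constraint. I expect most of the work to go into a careful case analysis of how the partial projection spanned by a prefix of $\seq{w}{\ell}$ meets the fixed projection spanned by $\seq{v}{\ell}$, in the spirit of the hierarchy-type arguments used in earlier bootstrap percolation work on high-dimensional product graphs.
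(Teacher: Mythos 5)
There is a genuine gap in your first ingredient. The decomposition \lq\lq fix $\seq{v}{\ell}$, choose the shared positions $P_w$ and the matching, and bound the free positions of $\seq{w}{\ell}$ by $\prod_{j\notin P_w}\seqspanconst{j}$\rq\rq\ does give a valid upper bound, but the claim that the resulting sum over $P_w$ is dominated, up to $(\ell+1)^{\bigo{1}}$, by the diagonal term $\prod_{j=i}^{\ell}\seqspanconst{j}$ is false. With your convention $\seqspanconst{0}=k^n$ the sequence $\seqspanconst{j}$ is not increasing at $j=0$ (indeed $\seqspanconst{0}=k^n\gg \seqspanconst{1}=\binom{n}{2}(k-1)^2$, and for $k=2$ the ratio $\seqspanconst{j+1}/\seqspanconst{j}$ also dips below $1$ for the last few $j$ near $\criticalsizesequential$). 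Consequently the maximal term of the elementary-symmetric sum is obtained by placing the shared positions at $P_w=\{1,\dots,i\}$ (or later), giving $\prod_{j\notin P_w}\seqspanconst{j}= k^n\prod_{j=i+1}^{\ell}\seqspanconst{j}$, which exceeds the diagonal term by the factor $k^n/\seqspanconst{i}\approx k^{n-2i}/n^2$ --- exponentially large in $n$ whenever $i$ is not close to $n/2$. No polynomial factor $(\ell+1)^{\bigo{1}}$ can absorb this, so the two ingredients as stated cannot combine to give \eqref{eq:bound:seqoverlap}.

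The underlying reason the product bound is so loose is that a \emph{free} position of $\seq{w}{\ell}$ that precedes a shared one is not free at all: if, say, $\vertexvec{w}_1$ is forced to be a prescribed vertex of $\seq{v}{\ell}$, then $\vertexvec{w}_0$ has only $\binom{n}{2}(k-1)^2=\seqspanconst{1}$ admissible values (it must lie at distance two from $\vertexvec{w}_1$), not $k^n$. This is exactly what the paper exploits: by vertex-transitivity the number of \feasible{} sequences of length $m$ \emph{ending at a prescribed vertex} is $|\seqset{m}|/k^n$, which is the engine of \Cref{lem:bound:seqoverlaplastindex}; the paper then splits a pair according to the \emph{first} index at which the two sequences meet and runs an induction over the successive meeting indices (\Cref{lem:bound:seqoverlap:extensions}), in which placing an additional shared vertex at index $\specialindex$ costs roughly $(\specialindex-i)/\constinlemma{\specialindex}$ rather than a constant, and the factor $8^i$ emerges from summing these geometric-type contributions (\Cref{lem:bound:auxfunc}). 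Your charging argument addresses only the count of triples $(S,P_w,\sigma)$ and leaves the extension counts untouched, so it does not repair the loss; to make your route work you would need precisely such a \lq\lq sequences with prescribed later vertices are rare\rq\rq\ estimate, at which point you are essentially reconstructing the paper's argument.
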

Combining \eqref{eq:secondmoment:prob} and \eqref{eq:def:seqoverlap} we have 
\begin{align}\label{eq:Deltabound}
    \D{\criticalsizesequential} \overset{\eqref{eq:secondmoment:prob},\eqref{eq:def:seqoverlap}}{=} \sum_{i=1}^{\criticalsizesequential+1} \abs{\seqoverlap{\criticalsizesequential}{i}} p^{2\criticalsizesequential+2-i} \leq (\criticalsizesequential+1) \max_{i \in [\criticalsizesequential+1]}  \left(\abs{\seqoverlap{\criticalsizesequential}{i}} p^{2\criticalsizesequential+2-i}\right). 
\end{align}
Using \eqref{eq:Deltabound} and \Cref{lem:bound:seqoverlap} (by taking $\ell=\criticalsizesequential$) we obtain 
\begin{align}
     \D{\criticalsizesequential} \overset{\eqref{eq:Deltabound},\eqref{eq:bound:seqoverlap}}&{\leq}\Theta(1)(\criticalsizesequential+1)^4 \max_{i \in [\criticalsizesequential+1]} \left(8^i \frac{|\seqset{\criticalsizesequential}|^2}{|\seqset{i-1}|}p^{2\criticalsizesequential+2-i}\right) \notag \\
     &\leq \Theta(n^4)\max_{i \in [\criticalsizesequential+1]} \left(8^i \frac{|\seqset{\criticalsizesequential}|^2}{|\seqset{i-1}|}p^{2\criticalsizesequential+2-i}\right) ,\label{eq:Deltabound2}
     \end{align}
     where the last inequality is due to $\criticalsizesequential=\Theta(n)$.
From \eqref{eq:Deltabound2} and  \eqref{eq:expec(seqspannum)} (with $\ell=\criticalsizesequential$ and $\ell=i-1$, obtaining $\expec{\seqspannum{\criticalsizesequential}} = p^{\criticalsizesequential+1}|\seqset{\criticalsizesequential}|$ and $\expec{\seqspannum{i-1}} = p^{i}|\seqset{i-1}|$, respectively) it follows
\begin{align*} 
    \D{\criticalsizesequential}  
   \overset{\eqref{eq:Deltabound2}}&{\leq}  \Theta(n^4)\max_{i \in [\criticalsizesequential+1]} \left(8^i \frac{|\seqset{\criticalsizesequential}|^2 p^{2\criticalsizesequential+2}}{|\seqset{i-1}| p^{i}}\right)
    \notag\\
    \overset{\eqref{eq:expec(seqspannum)}}&{\leq} \Theta(n^4) \max_{i \in [\criticalsizesequential+1]} \left(8^i \frac{\expec{\seqspannum{\criticalsizesequential}}^2}{\expec{\seqspannum{i-1}}}  \right) 
     = \frac{\Theta(n^4)\expec{\seqspannum{\criticalsizesequential}}^2}{\min_{i \in [\criticalsizesequential+1]} \left(8^{-i} \expec{\seqspannum{i-1}}\right)},
\end{align*}
which implies
\begin{align}\label{eq:secondmoment_reformulation}
   \frac{\D{\criticalsizesequential}}{ \expec{\seqspannum{\criticalsizesequential}}^2} \leq \frac{\Theta(n^4) }{\min_{i \in [\criticalsizesequential+1]} \left(8^{-i} \expec{\seqspannum{i-1}}\right)}.
\end{align}
Thus, in order to prove \eqref{eq:secondmoment:variancecond}, i.e., $ \frac{\D{\criticalsizesequential}}{ \expec{\seqspannum{\criticalsizesequential}}^2} \to 0$, it suffices to verify that $\expec{\seqspannum{i}}$ grows faster than $8^{i+1} n^4$ for all $i \in [\criticalsizesequential]\cup\{0\}$. 
\begin{lem}\label{lem:expec_to_inf}
    For every $i \in [\criticalsizesequential]\cup\{0\}$,
    \begin{align}
        \frac{\expec{\seqspannum{i}} }{8^{i+1} \Theta(n^4) } = \omega(1).\label{eq:expec_to_inf}
    \end{align}
\end{lem}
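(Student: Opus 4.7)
The plan is to substitute the explicit formula for $|\seqset{i}|$ from \Cref{lem:seqspancount} together with the lower bound $p \ge p^* = 200 n^{-2} k^{-2\sqrt{n}+1}$ into the identity $\expec{\seqspannum{i}} = p^{i+1}|\seqset{i}|$ of \eqref{eq:expec(seqspannum)}, and then to show that the resulting lower bound on $\expec{\seqspannum{i}}/(8^{i+1}n^4)$ tends to infinity uniformly in $i \in [0, \criticalsizesequential]$.

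The key algebraic step, which I would carry out first, is that after applying the elementary bound $(k-1)^{2i} \ge k^{2i}/4^i$ (valid for $k \ge 2$), the combined exponent of $k$ simplifies to a perfect square: a direct computation gives
\[
(1-2\sqrt{n})(i+1) + 2i + (n+i^2-i) \;=\; i^2 - 2(\sqrt{n}-1)i + (\sqrt{n}-1)^2 \;=\; (i-\sqrt{n}+1)^2 \;\ge\; 0.
\]
Using $k \ge 2$ and collecting the numerical factors as $200^{i+1}/8^{i+1} \cdot 4^{-i} \cdot 2^{-i} = 25 \cdot (25/8)^i$, this yields the working inequality
\[
\frac{\expec{\seqspannum{i}}}{8^{i+1}n^4} \;\ge\; \frac{25}{n^6} \cdot \left(\frac{25}{8}\right)^{\!i} \cdot \frac{n!}{(n-2i)!\, n^{2i}} \cdot 2^{(i-\sqrt{n}+1)^2}.
\]

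To verify divergence uniformly in $i$, I would split into three regimes based on the size of $i$ relative to $\sqrt{n}$. For $i \le \sqrt{n}/2$ the square $(i-\sqrt{n}+1)^2 \ge n/5$ for $n$ large, so the factor $2^{n/5}$ alone dominates $n^6$. For $\sqrt{n}/2 < i \le 2\sqrt{n}$ the factor $(25/8)^i \ge 3.125^{\sqrt{n}/2}$ grows super-polynomially in $n$, while the telescoping product $\prod_{j=0}^{2i-1}(1-j/n) = \frac{n!}{(n-2i)!\,n^{2i}} \ge e^{-O(1)}$ stays bounded below since $i^2/n = O(1)$. For $i > 2\sqrt{n}$ one has $(i-\sqrt{n}+1)^2 \ge i^2/4$, and combining this with the crude lower bound $\prod_{j=0}^{2i-1}(1-j/n) \ge 4^{-i}$ (for $i \le n/4$) or with a Stirling-type bound $\ge e^{-n}$ (for $n/4 < i \le \criticalsizesequential$) shows that $2^{i^2/4}$ overwhelms every opposing factor.

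The main obstacle, and the reason the precise constant in $p^*$ matters, lies in the middle regime $i \approx \sqrt{n}-1$, where the perfect square vanishes and the $k$-exponential disappears entirely. In that range one must rely on the factor $(25/8)^i \approx 3.125^{\sqrt{n}}$ to beat the polynomial $n^6$ in the denominator, which works precisely because the hypothesis is $p \ge 200\, p_*$ rather than just $p \ge p_*$: the extra factor of $200$ produces the ratio $25/8 > 1$ per step that ultimately drives the minimum of $\expec{\seqspannum{i}}/(8^{i+1}n^4)$ to infinity.
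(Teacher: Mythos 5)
Your proposal is correct and follows essentially the same route as the paper: substitute the explicit count from \Cref{lem:seqspancount} and $p\ge p^*$ into \eqref{eq:expec(seqspannum)}, use $(k-1)^{2i}\ge k^{2i}4^{-i}$, observe that the $k$-exponent is the nonnegative perfect square $(i-\sqrt{n}+1)^2$, and let the constant $200$ supply a per-step factor exceeding $1$. The only difference is the endgame: the paper bounds $\frac{n!}{(n-2i)!}\ge n^{2i}e^{-i}$ once, folding everything into the single geometric factor $\left(\frac{200}{64e}\right)^i\ge\left(\frac{10}{9}\right)^i$ and thereby avoiding your three-regime case split, which is a bookkeeping variation rather than a different argument.
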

As we will see in the proof of \Cref{lem:expec_to_inf}, the function $x\mapsto \frac{\expec{\seqspannum{x}} }{8^{x+1} \Theta(n^4) }$  attains its minimum over the reals when 
\begin{align*}
    x = \sqrt{n}-1.
\end{align*} However, in \Cref{lem:expec_to_inf} the index $i \in [\criticalsizesequential]\cup\{0\}$ is an {\em integer}, and thus the actual minimiser (or \lq bottleneck\rq), i.e., the index $i\in [\criticalsizesequential]\cup\{0\}$ at which $\frac{\expec{\seqspannum{i}} }{8^{i+1} \Theta(n^4) }$ is smallest, happens either for
$i = \rounddown{\sqrt{n}}-1$ or for $i = \roundup{\sqrt{n}}-1$. Because in both cases the error terms are insignificant, we only focus on the former. We observe that  each \feasible{} sequence of \seqsize{} $i_{*}:=\rounddown{\sqrt{n}}-1$ spans a \projection{} of dimension $2i_{*}$. Thus, the {\bf critical droplet} for percolation is indeed an internally spanned \projection{} of {\em critical dimension}
$$\criticaldimension :=  2 i_{*} = \rounddown{2\sqrt{n}}-2.$$
(See \Cref{lem:percolation:bottleneck} for the \lq negative\rq\ statement, meaning that for $p \leq p_*$ \whp{} there does not exist an internally spanned \projection{} of dimension $\criticaldimension$). 

As discussed before, by the second moment method (see \Cref{lem:secondmoment}), in order to show that \whp{} $\seqspannum{\criticalsizesequential} \geq 1$ it suffices to verify \eqref{eq:secondmoment:expeccond}, i.e., $\expec{\seqspannum{\criticalsizesequential}} \xrightarrow{} \infty$, and \eqref{eq:secondmoment:variancecond}, i.e., $ \frac{\D{\criticalsizesequential}}{ \expec{\seqspannum{\criticalsizesequential}}^2} \to 0$: We observe that  the former  follows directly from \Cref{lem:expec_to_inf} (with $i = \criticalsizesequential$) and the latter from  \eqref{eq:secondmoment_reformulation} and \Cref{lem:expec_to_inf}  (with $i = \criticalsizesequential$).
\begin{cor}\label{cor:Xl_not_0}
\Whp{} $\seqspannum{\criticalsizesequential} \geq 1$. 
\end{cor}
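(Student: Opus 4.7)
My plan is to deduce the corollary by a direct application of the second moment method (\Cref{lem:secondmoment}) to the random variable $X := \seqspannum{\criticalsizesequential}$, written as the sum of indicators $\indicator{\{\seq{v}{\criticalsizesequential}\} \subseteq \boldsymbol{\initinfec{}}_p}$ ranging over $\seq{v}{\criticalsizesequential} \in \seqset{\criticalsizesequential}$. Two such indicator events are dependent precisely when the corresponding sequences share at least one vertex, so the sum of joint probabilities over dependent pairs is exactly $\D{\criticalsizesequential}$ as defined just before \eqref{eq:secondmoment:expeccond}. Thus it suffices to verify the two hypotheses \eqref{eq:secondmoment:expeccond} and \eqref{eq:secondmoment:variancecond}.

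For \eqref{eq:secondmoment:expeccond}, namely $\expec{\seqspannum{\criticalsizesequential}} \to \infty$, I would simply apply \Cref{lem:expec_to_inf} with $i = \criticalsizesequential$: since $8^{\criticalsizesequential+1} \cdot \Theta(n^4) \to \infty$, the claimed divergence is immediate. For \eqref{eq:secondmoment:variancecond}, I would invoke the already established bound \eqref{eq:secondmoment_reformulation}, which reduces the problem to showing $\min_{i \in [\criticalsizesequential+1]} 8^{-i}\expec{\seqspannum{i-1}} = \omega(n^4)$. Applying \Cref{lem:expec_to_inf} at index $i-1$ for each $i$ in the relevant range yields $8^{-i}\expec{\seqspannum{i-1}} = \omega(n^4)$ at every admissible index; in particular, at the actual minimiser $i-1 \approx \sqrt{n}-1$ identified in the discussion preceding \Cref{lem:expec_to_inf}, the bound still holds, so the min is $\omega(n^4)$ as needed.

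Combining these two verified hypotheses with \Cref{lem:secondmoment} yields $\prob{\seqspannum{\criticalsizesequential} \geq 1} \to 1$, which is the conclusion. All the heavy lifting has already been done in \Cref{lem:seqspancount}, \Cref{lem:bound:seqoverlap}, and most crucially \Cref{lem:expec_to_inf}, so I do not expect any real obstacle here; the only mild subtlety is to ensure that the minimum over $i$ appearing in \eqref{eq:secondmoment_reformulation} is controlled at its minimiser near $i \approx \sqrt{n}$, which is precisely what the quantification over all $i \in [\criticalsizesequential] \cup \{0\}$ in \Cref{lem:expec_to_inf} delivers.
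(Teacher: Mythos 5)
Your proposal is correct and follows essentially the same route as the paper: the paper also deduces \Cref{cor:Xl_not_0} by verifying \eqref{eq:secondmoment:expeccond} via \Cref{lem:expec_to_inf} with $i = \criticalsizesequential$ and \eqref{eq:secondmoment:variancecond} via \eqref{eq:secondmoment_reformulation} together with \Cref{lem:expec_to_inf} applied over all admissible indices, then invokes \Cref{lem:secondmoment}. No gaps.
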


Note that by \Cref{cor:Xl_not_0}, \whp{} $G$   contains an infected \feasible{} sequence of \seqsize{} $\criticalsizesequential$ and by \Cref{lem:seqspanningspansprojection}, this sequence spans a \projection{} $\subcube{}$  in $G$ of dimension $2\criticalsizesequential$: In other words,  \whp{} $G$   contains an internally spanned \projection{} $\subcube{}$ of dimension $2\criticalsizesequential$. Recalling $\criticalsizesequential:= \rounddown{\frac{n}{2}}$ we obtain the following.
\begin{cor}\label{cor:spanningprojection}
\Whp{} there exists an internally spanned \projection{} $\subcube{}$ of dimension $2 \rounddown{\frac{n}{2}}$  in $G$. 
\end{cor}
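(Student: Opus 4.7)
The plan is to combine \Cref{cor:Xl_not_0} with \Cref{lem:seqspanningspansprojection} in a direct, essentially deterministic manner, and then invoke \Cref{lem:projclosed} to conclude. First, I would apply \Cref{cor:Xl_not_0} to obtain, with high probability, an infected \feasible{} sequence $\seq{v}{\criticalsizesequential} = (\vertexvec{v}_0, \ldots, \vertexvec{v}_{\criticalsizesequential})$ of \seqsize{} $\criticalsizesequential = \rounddown{n/2}$, i.e.\ a sequence whose underlying set $U \coloneqq \{\seq{v}{\criticalsizesequential}\}$ is contained in $\boldsymbol{\initinfec{}}_p$ and whose ordering witnesses Property \eqref{Prop:sequentiallyspanning}. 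Note that $|U| = \criticalsizesequential + 1$ (the indices $0, 1, \ldots, \criticalsizesequential$ are pairwise distinct, since at each step the new vertex lies at distance exactly two from the previously spanned projection and is therefore not already in it).

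Next, since $U$ is sequentially spanning in the sense of \Cref{def:sequentiallyspanning}, I would invoke \Cref{lem:seqspanningspansprojection} to obtain a \projection{} $\subcube{}$ of $G$ satisfying
\[
\dimension{\subcube{}} \;=\; 2(|U|-1) \;=\; 2\criticalsizesequential \;=\; 2\rounddown{n/2}
\qquad\text{and}\qquad
\closure{U} \;=\; \subcube{}.
\]
It remains to verify that $\subcube{}$ is internally spanned by $\boldsymbol{\initinfec{}}_p$ in the sense of \Cref{def:closure}. Since $U \subseteq \subcube{}$ and $U \subseteq \boldsymbol{\initinfec{}}_p$, we have $U \subseteq \boldsymbol{\initinfec{}}_p \cap \subcube{}$, and monotonicity of the closure operator together with $\closure{U} = \subcube{}$ gives $\subcube{} \subseteq \closure{\boldsymbol{\initinfec{}}_p \cap \subcube{}}$. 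The reverse containment is immediate from \Cref{lem:projclosed}, which guarantees that the \projection{} $\subcube{}$ is closed. Combining these yields $\subcube{} = \closure{\boldsymbol{\initinfec{}}_p \cap \subcube{}}$, i.e.\ $\subcube{}$ is internally spanned, as required.

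There is essentially no hard step here: the statement is a bookkeeping consequence of results already proved. The only mild subtlety worth spelling out in the write-up is the index accounting, namely that a \feasible{} \emph{sequence} of \seqsize{} $\criticalsizesequential$ corresponds to a \feasible{} \emph{set} of cardinality $\criticalsizesequential + 1$, which is why \Cref{lem:seqspanningspansprojection} produces a \projection{} of dimension $2\criticalsizesequential = 2\rounddown{n/2}$ rather than $2(\criticalsizesequential - 1)$.
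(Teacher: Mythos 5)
Your proposal is correct and follows exactly the route the paper takes: \Cref{cor:Xl_not_0} yields \whp{} an infected \feasible{} sequence of \seqsize{} $\criticalsizesequential$, and \Cref{lem:seqspanningspansprojection} shows its vertex set spans a \projection{} of dimension $2\criticalsizesequential = 2\rounddown{n/2}$, which is therefore internally spanned. The extra details you supply (the length-versus-cardinality bookkeeping and the verification of internal spanning via closedness of \projections{}) are correct and merely make explicit what the paper treats as immediate.
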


In order to prove  \crefitemintheorem{thm:main:threshold}{thm:main:threshold:upper} we need one last auxiliary result when $n$ is odd. 
\begin{lem}\label{lem:odd_case_distinction}
    Let $n = 2m+1$ for  $m\in \mathbb N_0$. Then \whp{} the following holds: For every \projection{} $\subcube{}$ of dimension $2m$ there exists at least one infected vertex in $G$ that is not contained in $\subcube{}$.
\end{lem}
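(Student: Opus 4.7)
The plan is to apply a straightforward union bound over all projections of dimension $2m$. A projection $H$ of dimension $2m$ in $G = \square_{i=1}^{2m+1} K_k$ is obtained by selecting one coordinate $i \in [n]$ to fix and choosing a value $v_i \in V^{(i)}$. Consequently there are exactly $nk = (2m+1)k$ such projections, and each contains $k^{2m} = k^{n-1}$ vertices, so $|V \setminus H| = k^{n-1}(k-1)$.

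For a fixed projection $H$ of dimension $2m$, the event that no initially infected vertex lies outside $H$ has probability
\begin{align*}
    \prob{\boldsymbol{\initinfec{}}_p \cap (V\setminus H) = \emptyset} = (1-p)^{k^{n-1}(k-1)} \leq \expp{-p\,k^{n-1}(k-1)}.
\end{align*}
A union bound over all $nk$ projections of dimension $2m$ then yields
\begin{align*}
    \prob{\exists\,\subcube{}\text{ of dim. }2m:\ \boldsymbol{\initinfec{}}_p\subseteq \subcube{}}
    \leq nk\cdot \expp{-p\,k^{n-1}(k-1)}.
\end{align*}

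It remains to show that the right hand side tends to zero. Using the hypothesis $p \geq p^* = 200 n^{-2}k^{-2\sqrt{n}+1}$ and $k-1 \geq k/2$ (valid since $k\geq 2$), we obtain
\begin{align*}
    p\,k^{n-1}(k-1) \geq 100\, n^{-2}\,k^{n-2\sqrt{n}+1}.
\end{align*}
Taking logarithms, the union bound is at most $o(1)$ provided that
\begin{align*}
    \log(nk) - 100\, n^{-2}\,k^{n-2\sqrt{n}+1} \xrightarrow[n\to\infty]{} -\infty.
\end{align*}
Since $k\leq 2^{\sqrt{n}}$, we have $\log(nk) = O(\sqrt{n})$, while $k^{n-2\sqrt{n}+1} \geq 2^{n-2\sqrt{n}+1}$ grows super-polynomially in $n$, so this limit is immediate.

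The argument is entirely elementary, and I do not anticipate any real obstacle: the only point requiring mild care is verifying that the exponent $n-2\sqrt{n}+1$ is positive (which holds for $n$ sufficiently large, as we are assuming throughout \Cref{sec:LowerBound}--\Cref{sec:UpperBound}) so that $k^{n-2\sqrt{n}+1}\geq 2^{n-2\sqrt{n}+1}$ dominates $n^2\log(nk)$ uniformly in the allowed range $2\leq k\leq 2^{\sqrt{n}}$.
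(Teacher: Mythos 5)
Your proposal is correct and follows essentially the same route as the paper's proof: a union bound over the $nk$ projections of dimension $2m$, combined with the bound $(1-p)^{|V\setminus H|}\leq \expp{-p|V\setminus H|}$ and the hypothesis $p\geq p^*$ together with $k\leq 2^{\sqrt n}$. The only cosmetic difference is that the paper simplifies the exponent to $k^{n/2}$ before concluding, whereas you track the exact quantity $100\,n^{-2}k^{n-2\sqrt n+1}$; both yield the same $o(1)$ bound.
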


\begin{proof}[Proof of \crefitemintheorem{thm:main:threshold}{thm:main:threshold:upper}]
By \Cref{cor:spanningprojection}, 
\whp{} there exists an internally spanned \projection{} $\subcube{}$ of dimension $2 \rounddown{\frac{n}{2}}$  in $G$.

If $n$ is even, then $\dimension{\subcube{}} =2 \rounddown{\frac{n}{2}}= n$ and thus $\subcube{} = G$, showing that \whp{}  $G$ percolates.

If $n$ is odd, say $n = 2m+1$ for  $m\in \mathbb N_0$, then  $\dimension{\subcube{}} =2 \rounddown{\frac{n}{2}}=2m$. But, it follows from \Cref{lem:odd_case_distinction} that \whp{} there exists an infected vertex $\vertexvec{v}$ that is not contained in $\subcube{}$. Clearly, $\{\vertexvec{v}\} \cup \subcube{}$ can have no trivial coordinates and in addition it has to hold that  $\dist{\vertexvec{v}}{\subcube{}} = 1 $. Thus, by \Cref{lem:projections_span_projection} we have $\closure{\{\vertexvec{v}\}\cup \subcube{}} = G,$  and thus \whp{} $G$ percolates.
\end{proof}

\section{Proofs of auxiliary results for the lower threshold}\label{sec:proof:lower}
In this section we will prove  auxiliary  results that were necessary to show the lower threshold of \crefitemintheorem{thm:main:threshold}{thm:main:threshold:lower}. Recall that for a \projection{} $\subcube{}$ of $G$, threshold dimension $t \in [n]$ and indices $(\ell,i,j,d) \in \N_0^4$ we denote by
\begin{align*}
    \union{\subcube{}}{t}{\ell}{i}{j}{d} \coloneqq \left| \setbuilder{\parens{\subcube{\ell'},\subcube{i'}, \subcube{j'},d'} \in \cand{\subcube{}}{t}}{\parens{\ell' =\ell} \land \parens{i'=i}\land \parens{j'=j} \land \parens{d'=d}}\right|
\end{align*} the number of candidate quadruples with fixed indices $(\ell,i,j,d)$. In addition we denote by 
\begin{align*}
    \indextriplesum{\subcube{}}{t} \coloneqq\setbuilder{(\ell,i,j,d)\in \N_0^4}{\left(t \leq \ell \leq \minp{}{i+j+d,\dimension{\subcube{}}}\right) \land (j \leq i <t) \land (i+d <\ell) \land ( d \in \{0,1,2\})}
\end{align*} the set of all \emph{admissible indices} and by 
$$ \triple{\subcube{}}{t} \coloneqq \setbuilder{(\ell,i,d) \subseteq \N_0^3}{\exists j\colon (\ell,i,j,d) \in \indextriplesum{\subcube{}}{t}}$$ 
 the set of \emph{admissible triples}.

We begin with explicitly computing the function $\union{\subcube{}}{t}{\ell}{i}{j}{d}$. 
\begin{lem}\label{lem:U()computation}
 For $m \in [n]$, let $\subcube{}$ be a \projection{} of dimension $m$. Then for $t \in [m]$ and $ (\ell,i,j,d) \in \N_0^4$, 
    \begin{align}\label{eq:U(l,i,j,d)count}
    \union{\subcube{}}{t}{\ell}{i}{j}{d} =\binom{m}{\ell}  \binom{\ell}{i} \binom{\ell-i}{d} \binom{i}{i+j+d-\ell} k^{m+\ell-i-j-d} (k-1)^d \left(\frac{1}{2}\right)^{\indicator{i = j}} \indicator{(\ell,i,j,d) \in \indextriplesum{\subcube{}}{t}}.
\end{align}
\end{lem}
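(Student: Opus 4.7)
The plan is a four-stage combinatorial decomposition. If the indices are not admissible, the right-hand side of \eqref{eq:U(l,i,j,d)count} vanishes and no candidate quadruples exist, so throughout I assume $(\ell, i, j, d)\in\indextriplesum{\subcube{}}{t}$. Let $M$ be the set of non-trivial coordinates of $\subcube{}$, so $|M|=m$.

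First, I would place $\subcube{\ell}$ inside $\subcube{}$. As a projection of $\subcube{}$, it is specified by a subset $L\subseteq M$ of size $\ell$ together with a value in $[k]$ at each coordinate of $M\setminus L$, giving $\binom{m}{\ell}k^{m-\ell}$ choices.

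Next, having fixed $\subcube{\ell}$, I would enumerate the pairs $(\subcube{i},\subcube{j})$ inside $\subcube{\ell}$ compatible with $\dist{\subcube{i}}{\subcube{j}}=d$ and $\closure{\subcube{i}\cup\subcube{j}}=\subcube{\ell}$. Let $I,J\subseteq L$ denote their non-trivial coordinate sets and $a,b$ their respective vectors of fixed values. The crux is the geometric claim, extracted from the proof of \Cref{lem:projections_span_projection}, that the span of two projections at distance $d\le 2$ is itself a projection whose non-trivial coordinate set equals $I\cup J$ together with the set $D$ of coordinates where $a$ and $b$ disagree, and $|D|=d$. Imposing $\closure{\subcube{i}\cup\subcube{j}}=\subcube{\ell}$ then forces $I\cup J\subseteq L$, $|L\setminus(I\cup J)|=d$ and $|I\cap J|=i+j+d-\ell$. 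Choosing $I\subseteq L$, then $L\setminus(I\cup J)\subseteq L\setminus I$, then $I\cap J\subseteq I$, contributes $\binom{\ell}{i}\binom{\ell-i}{d}\binom{i}{i+j+d-\ell}$. For the fixed values of $\subcube{i},\subcube{j}$ inside $\subcube{\ell}$, the $d$ coordinates in $L\setminus(I\cup J)$ each admit $k(k-1)$ choices (any $a_c$, then $b_c\neq a_c$), the $\ell-j-d$ coordinates in $I\setminus J$ each admit $k$ choices for $b$, and the $\ell-i-d$ coordinates in $J\setminus I$ each admit $k$ choices for $a$; together they contribute $k^{2\ell-i-j-d}(k-1)^d$.

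Finally, when $i=j$, the above construction labels the two projections distinctly and so counts every unordered candidate quadruple twice; dividing by $2$ yields the factor $(1/2)^{\indicator{i=j}}$. Assembling the four contributions gives the stated formula. I expect the only non-routine step to be the equality case of \Cref{lem:projections_span_projection} invoked in the second stage, namely that the non-trivial coordinate set of the span of two projections at distance $d\le 2$ is exactly $I\cup J\cup D$; for $d=0$ this is immediate, and for $d\in\{1,2\}$ it reduces to the elementary fact that two distinct fixed values at a coordinate, combined with percolation on a complete base $K_k$, propagate to all $k$ values in that coordinate.
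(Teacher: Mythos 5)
Your proposal is correct and follows essentially the same counting argument as the paper: choose the coordinates and values of $\subcube{\ell}$, $\subcube{i}$, the $d$ disagreement coordinates, and $\subcube{j}$, with the same binomial and power-of-$k$ contributions and the same symmetry factor for $i=j$. The only (welcome) difference is that you make explicit the geometric fact that the span of two projections at distance $d\le 2$ has non-trivial coordinate set exactly $I\cup J\cup D$, which the paper's proof uses implicitly.
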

\begin{proof}
Note first that by \Cref{lem:wittnessingquad}, the relations $\subcube{\ell} \subseteq \subcube{m}$ and $\subcube{i} \cup \subcube{j} \subseteq \subcube{\ell}$ have to hold. 
 We start by choosing a \projection{} $\subcube{\ell}$ in $\subcube{m}$. There are $ \binom{m}{\ell}$ ways to choose the coordinates in which $\subcube{\ell}$ will consist of a complete $K_k$. In all other coordinates we can choose any of $k$ vertices, resulting in $ \binom{m}{\ell} k^{m-\ell}$ choices for $\subcube{\ell}$.
Similarly, there are $\binom{\ell}{i} k^{\ell-i}$ ways to fix a \projection{} $\subcube{i}$ in $\subcube{\ell}$. Next, we aim to fix the $d$ coordinates in which $\subcube{i}$ and $\subcube{j}$ differ. Clearly, $\subcube{i}$ has to consist of single vertices in these coordinates, implying that there are $\binom{\ell-i}{d}$ possibilities for choosing those coordinates and further $(k-1)^d$ possibilities for specifying the value of  $\subcube{j}$ in those coordinates. To recap shortly, we already have specified $\subcube{\ell}$ and $\subcube{i}$ as well as the $d$ coordinates of $\subcube{j}$ in which it differs from $\subcube{i}$. 

Continuing with fixing $\subcube{j}$, let us next assign the $j$ coordinates in which $\subcube{j}$ consists of complete graphs. There are $\ell-i-d$ remaining coordinates, in which $\subcube{i}$ consists of a single vertex. In each of those $\subcube{j}$ has to be a complete $K_k$, as otherwise the two \projections{} would either lie at distance at least three, or not span $\subcube{\ell}$. Thus, there only remain $j-(\ell-i-d) = i+j+d-\ell$ coordinates in which $\subcube{j}$ is complete and these coordinates can be chosen from any coordinate in which $\subcube{i}$ is  also complete, resulting in further $\binom{i}{i+j+d-\ell}$ choices. Finally, it remains to specify the value of $\subcube{j}$ in the remaining $\ell-j-d$ coordinates in which $\subcube{j}$ consists of a single vertex, leading to $k^{\ell-j-d}$ further possibilities. After simplifying the expression, we are left with a total of $\binom{m}{\ell}  \binom{\ell}{i} \binom{\ell-i}{d} \binom{i}{i+j+d-\ell} k^{m+\ell-i-j-d} (k-1)^d$ possibilities.  Lastly, note that in the case $i=j$ a factor of $2$ is lost due to symmetry and we have to require $(\ell,i,j,d) \in \indextriplesum{\subcube{m}}{t}$ in order to make sure that the indices $(\ell,i,j,d)$ fulfill the dimensional constraints posed in \Cref{lem:wittnessingquad}. 
\end{proof}

Our next aim is to prove \Cref{lem:probintspanupperbound}. Similarly as in \eqref{eq:def:ffunc}, for every \projection{} $\subcube{}$, threshold dimension $t \in [n]$ and indices $(\ell,i,j,d) \in \N_0^4$  we define 
\begin{align}
    \ffunc{\subcube{}}{t}{\ell}{i}{j}{d} \coloneqq \union{\subcube{}}{t}{\ell}{i}{j}{d} \upboundcubeintspan{i} \upboundcubeintspan{j}, \label{def:ffunc2}
\end{align}
and set $\criticaldimension \coloneqq \rounddown{2\sqrt{n}}-2$ as before. Observe  that the functions $\upboundcubeintspanconstant{\ell}$ in \Cref{lem:probintspanupperbound} are chosen in such a way that for all $\ell \in [\criticaldimension]$ we have
\begin{align}\label{eq:cubeintspanconst_bounds}
    \frac{\upboundcubeintspanconstant{\ell}}{\upboundcubeintspanconstant{\ell-1}} \leq \frac{9 \sqrt{2}}{10} \leq \frac{9 \sqrt{k}}{10}, 
\end{align} which can be verified by simple calculations.
To show \Cref{lem:probintspanupperbound} we need the following auxiliary result.
\begin{lem}\label{lem:triplesumdomination}
    Let $15 \leq m \leq \criticaldimension$ and let $\subcube{}$ be a \projection{} of dimension $m$. Then, 
    \begin{align*}
         \sum_{(\ell,i,j,d) \in \indextriplesum{\subcube{}}{m}}  \ffunc{\subcube{}}{m}{\ell}{i}{j}{d} \leq \left(1+ k^{-\frac{m
        -11}{2}}\right)\left(1+ \frac{4}{n}\right) \ffunc{\subcube{}}{m}{m}{m-2}{0}{2}. 
    \end{align*}
\end{lem}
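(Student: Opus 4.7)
The plan is to reduce the sum to a direct comparison with the single term $\ffunc{\subcube{}}{m}{m}{m-2}{0}{2}$ and then bound every other admissible term by a small quantity. First, since the admissibility condition in $\indextriplesum{\subcube{}}{m}$ forces $m = t \leq \ell \leq \minp{}{i+j+d,\dimension{\subcube{}}} = \minp{}{i+j+d,m}$, we must have $\ell = m$ and $i + j + d \geq m$. Consequently the sum runs over triples $(i, j, d)$ satisfying $0 \leq j \leq i < m$, $d \in \{0,1,2\}$, $i+d \leq m$, and $i+j+d \geq m$. The candidate dominant triple is $(m-2, 0, 2)$, which trivially satisfies these constraints for $m \geq 3$.

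Using Lemma~\ref{lem:U()computation} and~\eqref{def:upboundcubeintspan}, I would write the ratio
\begin{align*}
R(i,j,d) := \frac{\ffunc{\subcube{}}{m}{m}{i}{j}{d}}{\ffunc{\subcube{}}{m}{m}{m-2}{0}{2}}
\end{align*}
as an explicit product of factors in $p$, $k$, $k-1$, $2$, binomial coefficients, factorials, and the ratio $\upboundcubeintspanconstant{i}\upboundcubeintspanconstant{j}/\upboundcubeintspanconstant{m-2}$ (which by \eqref{eq:cubeintspanconst_bounds} is uniformly $\bigo{1}$). The key feature of $R(i,j,d)$ is a factor $p^{(i+j-m)/2+1}$ coming from $\upboundcubeintspan{i}\upboundcubeintspan{j}/\upboundcubeintspan{m-2}$, which combined with $p \leq p_* = n^{-2}k^{-2\sqrt{n}+1}$ and $m \leq \criticaldimension$ produces strong decay whenever $i + j < m$.

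I would then partition the non-dominant triples into two groups. For $d = 2$ with $j \geq 1$, the constraints $i+j+d \geq m$ and $i+d \leq m$ force $m-2-j \leq i \leq m-2$, leaving at most $j+1$ admissible values of $i$; a direct estimate of $R(i,j,2)$ shows that the sum over admissible $i$ behaves like $p^{j/2} k^{\Theta(j)}/j!$, so summing in $j \geq 1$ yields a geometric-type series totalling $\bigo{k^{-(m-11)/2}}$ once $m \geq 15$. For $d \in \{0,1\}$, admissibility with $i \leq m-1$ restricts $(i,j)$ to a narrow window, and the combined binomial factor $\binom{m}{i}\binom{m-i}{d}\binom{i}{i+j+d-m}/\binom{m}{2}$ together with the $p$- and $k$-power ratios contribute $\bigo{1/m^2}\cdot \bigo{k^{-3/4}} = \bigo{1/n}$ per term (using $m \leq \criticaldimension$), so the finite sum is at most $4/n$. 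Combining the dominant contribution $R(m-2, 0, 2) = 1$ with these two groups, and repackaging as the product $(1 + k^{-(m-11)/2})(1 + 4/n)$, yields the claim.

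The main obstacle will be the precise bookkeeping: after cancellation against the dominant term one must identify the exact $k$-power in each $R(i,j,d)$, and verify that the geometric-type sum in the case $d = 2$, $j \geq 1$ gives exactly the exponent $-(m-11)/2$ rather than a weaker one. The hypothesis $m \geq 15$ enters precisely to guarantee that this geometric series has ratio strictly less than $1$, and the recursive definition of $\upboundcubeintspanconstant{m}$ in Lemma~\ref{lem:probintspanupperbound} is tuned so that the bound established here closes the induction used there.
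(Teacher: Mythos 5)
Your overall strategy---compare each admissible term to the single term $\ffunc{\subcube{}}{m}{m}{m-2}{0}{2}$ via explicit ratios from \Cref{lem:U()computation} and \eqref{def:upboundcubeintspan}, and exploit decay away from it---is the same as the paper's; the paper merely organises the domination coordinatewise (first over $j$ with $(\ell,i,d)$ fixed, then over $i$ along the diagonal $j=\ell-i-d$, then over $d$; \Cref{lem:sumdom:j}, \Cref{lem:sumdom:i}, \Cref{lem:sumdom:d}), while you partition the index set into two groups matched to the two factors of the bound. That reorganisation is legitimate, but two of your quantitative claims fail. For the group $d=2$, $j\ge 1$ you assert the sum over admissible $i$ "behaves like $p^{j/2}k^{\Theta(j)}/j!$". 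It does not: for each such $j$ the dominant admissible triple is the diagonal one $(i,j,d)=(m-2-j,j,2)$ with $i+j+d=m$, whose ratio to the main term carries \emph{no} net power of $p$, since the exponent $\frac{i+j}{2}+2$ in $\upboundcubeintspan{i}\upboundcubeintspan{j}$ is constant along that diagonal. A direct computation gives
\[
\frac{\ffunc{\subcube{}}{m}{m}{m-3}{1}{2}}{\ffunc{\subcube{}}{m}{m}{m-2}{0}{2}}=\frac{\upboundcubeintspanconstant{m-3}\,\upboundcubeintspanconstant{1}}{\upboundcubeintspanconstant{m-2}}\,k^{-\frac{m-3}{2}},
\]
which for, say, $m=15$ is of order $k^{-6}$ and thus vastly larger than anything containing $p^{1/2}\le n^{-1}k^{-\sqrt{n}+1/2}$. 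The decay in $j$ along the diagonal comes from a quadratic $k$-exponent (this is \eqref{eq:ffunc:diagonalbound}), and summing that geometric-type series with ratio $\tfrac{9}{10}$ costs a factor $10\le k^{7/2}$, which is exactly where the shift from $-(m-4)/2$ to $-(m-11)/2$ in the exponent comes from; the hypothesis $m\ge 15$ is not what makes the ratio less than one, it is what keeps $k^{-(m-11)/2}\le k^{-2}$ so the recursion for $\upboundcubeintspanconstant{m}$ stays bounded.

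For $d\in\{0,1\}$ the admissible pairs $(i,j)$ are not a "narrow window": they satisfy $j\le i\le m-1$ and $i+j\ge m-d$, i.e.\ $\Theta(m^2)$ pairs, so the inference "each term is $\bigo{1/n}$, hence the finite sum is at most $4/n$" is a non sequitur (and most terms are not of order $1/n$ anyway---the far ones are exponentially small in $k$, while only the two leading ones are of order $1/n$). What is actually needed, and what the paper proves, is that $\ffunc{\subcube{}}{m}{m}{m-1}{0}{1}\le \frac{2}{n}\ffunc{\subcube{}}{m}{m}{m-2}{0}{2}$ and $\ffunc{\subcube{}}{m}{m}{m-1}{1}{0}\le\frac12 \ffunc{\subcube{}}{m}{m}{m-1}{0}{1}$, together with geometric decay of all other $d\in\{0,1\}$ terms away from these two (the content of \Cref{lem:sumdom:j} and of \Cref{lem:sumdom:i}(b)); only then does the group total come out as $\frac{3}{n}(1+\smallo{1})\le\frac{4}{n}$. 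Both defects are repairable by exactly the diagonal ratio analysis above, but as written your bookkeeping would not close.
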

Observe that \Cref{lem:triplesumdomination} is quite similar to \Cref{lem:triplesumdomination_part2} and we will prove \Cref{lem:triplesumdomination} using slightly more general results than necessary in order for them to be also applicable to \Cref{lem:triplesumdomination_part2}. Let us start by comparing consecutive terms of $\upboundcubeintspan{\cdot}$. 

\begin{lem}\label{lem:ratio:P(j):estimate}
For $j \in [n]\cup \{0\}$,
    \begin{align*}
     \frac{\upboundcubeintspan{j+1}}{\upboundcubeintspan{j}} \leq \frac{j+1}{n}k^{\frac{j+5-2\sqrt{n}}{2}}. 
    \end{align*}
\end{lem}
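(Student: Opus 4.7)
The plan is to prove the lemma by a direct computation from the definition \eqref{def:upboundcubeintspan} of $\upboundcubeintspan{\cdot}$. Expanding the ratio term by term gives
\begin{align*}
\frac{\upboundcubeintspan{j+1}}{\upboundcubeintspan{j}} = \frac{\upboundcubeintspanconstant{j+1}}{\upboundcubeintspanconstant{j}} \cdot p^{1/2} \cdot (j+1) \cdot 2^{-1/2} \cdot (k-1) \cdot k^{\frac{2j+3}{4}},
\end{align*}
where the six factors arise from the six ingredients in the definition of $\upboundcubeintspan{m}$, and the exponent $\frac{2j+3}{4}$ on $k$ is obtained from the telescoping $\frac{(j+1)^2+2(j+1) - j^2 - 2j}{4}$.

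Next I would substitute the hypothesis $p \leq p_* = n^{-2} k^{-2\sqrt{n}+1}$ to obtain $p^{1/2} \leq n^{-1} k^{(1-2\sqrt{n})/2}$, which supplies the factor $\frac{1}{n}$ required by the conclusion. Combined with the estimate $\frac{\upboundcubeintspanconstant{j+1}}{\upboundcubeintspanconstant{j}} \leq \frac{9\sqrt{2}}{10}$ from \eqref{eq:cubeintspanconst_bounds}, collecting powers of $k$ yields
\begin{align*}
\frac{\upboundcubeintspan{j+1}}{\upboundcubeintspan{j}} \leq \frac{9(k-1)}{10} \cdot \frac{j+1}{n} \cdot k^{\frac{2j+5-4\sqrt{n}}{4}}.
\end{align*}

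It then remains to absorb the prefactor $\frac{9(k-1)}{10}$ into the desired exponent shift on $k$, which reduces to the purely numerical inequality $\frac{9(k-1)}{10} \leq k^{5/4}$ for every integer $k \geq 2$. This is elementary: $10 k^{5/4} = 10 \cdot k \cdot k^{1/4} \geq 10 \cdot 2^{1/4} \cdot k > 9k \geq 9(k-1)$. After this step the exponent of $k$ increases from $\frac{2j+5-4\sqrt{n}}{4}$ to $\frac{2j+10-4\sqrt{n}}{4} = \frac{j+5-2\sqrt{n}}{2}$, matching the statement.

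The main obstacle is bookkeeping rather than conceptual; the only subtlety is confirming that the estimate $\frac{\upboundcubeintspanconstant{j+1}}{\upboundcubeintspanconstant{j}} \leq \frac{9\sqrt{2}}{10}$ holds uniformly for all $j \geq 0$. For $j=0$ the ratio equals $\frac{9\sqrt{2}}{20}$; for $1 \leq j \leq 13$ it equals $\frac{9\sqrt{2}}{10}$ exactly; and for $j \geq 14$ the recursion gives the ratio $(1+k^{-(j-10)/2})(1+4/n)$, which is bounded by $\frac{5}{4}(1+4/n)$ and hence by $\frac{9\sqrt{2}}{10} \approx 1.273$ for $n$ sufficiently large, already a standing hypothesis.
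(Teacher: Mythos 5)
Your proposal is correct and follows essentially the same route as the paper: write out the ratio exactly from \eqref{def:upboundcubeintspan}, bound the constant ratio via \eqref{eq:cubeintspanconst_bounds}, substitute $\sqrt{p}\leq n^{-1}k^{(1-2\sqrt{n})/2}$, and absorb the factor $(k-1)$ (together with the leftover constant) into the spare $k^{5/4}$ in the exponent. The only difference is cosmetic bookkeeping — the paper discards the $9/10$ immediately and uses $(k-1)\leq k$ plus a quarter of slack in the exponent, whereas you carry $\tfrac{9(k-1)}{10}\leq k^{5/4}$ in one step — and your verification of the ratio bound on $\upboundcubeintspanconstant{j+1}/\upboundcubeintspanconstant{j}$ is the "simple calculation" the paper leaves implicit.
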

\begin{proof}
    By the definition \eqref{def:upboundcubeintspan} of  $\upboundcubeintspan{\cdot}$ it holds that
    \begin{align}
    \frac{\upboundcubeintspan{j+1}}{\upboundcubeintspan{j}} = \frac{\upboundcubeintspanconstant{j+1} p^{\frac{j+1}{2}+1} (j+1)! 2^{\frac{-j-1}{2}}(k-1)^{j+1} k^\frac{(j+1)^2+2(j+1)}{4}}{\upboundcubeintspanconstant{j} p^{\frac{j}{2}+1} j! 2^{\frac{-j}{2}}(k-1)^j k^\frac{j^2+2j}{4}} =  \frac{\upboundcubeintspanconstant{j+1}\sqrt{p}(j+1)k^{\frac{2j+3}{4}}(k-1)}{\upboundcubeintspanconstant{j}\sqrt{2}} \label{eq:ratio:P(j):equality}. 
    \end{align}
    Furthermore, using \eqref{eq:cubeintspanconst_bounds} and $p \leq n^{-2}k^{-2\sqrt{n}+1}$ we get
    \begin{align*}
        \frac{\upboundcubeintspanconstant{j+1}\sqrt{p}(j+1)k^{\frac{2j+3}{4}}(k-1)}{\upboundcubeintspanconstant{j}\sqrt{2}} \leq \sqrt{p}(j+1)k^{\frac{2j+3}{4}}(k-1) \leq \sqrt{p}(j+1)k^{\frac{j+4}{2}} \leq \frac{j+1}{n}k^{\frac{j+5-2\sqrt{n}}{2}},
    \end{align*} yielding together with \eqref{eq:ratio:P(j):equality} that
    \begin{align*}
        \frac{\upboundcubeintspan{j+1}}{\upboundcubeintspan{j}}\leq \frac{j+1}{n}k^{\frac{j+5-2\sqrt{n}}{2}}, 
    \end{align*} as claimed.
\end{proof}

In order to prove \Cref{lem:triplesumdomination}, we will now consider the effect it has on $\ffunc{\subcube{}}{t}{\ell}{i}{j}{d}$ to change each of the respective indices $i,j$ and $d$. We will first deal with different values of $j$, then, in \Cref{lem:sumdom:i}, consider different values of $i$ and lastly compare different values of $d$  in \Cref{lem:sumdom:d} . 
\begin{lem}\label{lem:sumdom:j}
For $m\in [n]$, let $\subcube{}$ be a \projection{} of dimension $m$. Let  $15 \leq t \leq \minp{}{m,\criticaldimension}$ and $(\ell,i,d) \in \triple{\subcube{}}{t}$ such that $15 \leq \ell \leq 3\sqrt{n}$. Then,   
    \begin{align}
        \sum_{j=0}^i \ffunc{\subcube{}}{t}{\ell}{i}{j}{d} \leq \ffunc{\subcube{}}{t}{\ell}{i}{\ell-i-d}{d}\left(1+k^{-\frac{\sqrt{n}}{6}}\right).\label{eq:varyingj}
    \end{align}
\end{lem}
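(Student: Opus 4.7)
The plan is to show that the sum is dominated by its term at the minimum admissible value $j = a := \ell - i - d$, by establishing rapid geometric decay of the summands as $j$ grows. By the indicator appearing in \Cref{lem:U()computation}, $\ffunc{\subcube{}}{t}{\ell}{i}{j}{d} = 0$ whenever $j < a$ or $j > i$, so the effective range of summation is $[a,i]$. If this range is empty both sides of the claimed inequality vanish, so I may assume $a \leq i$; this forces $i \geq (\ell-d)/2$ and hence the crucial bound $a \leq (\ell-d)/2 \leq \ell/2 \leq 3\sqrt{n}/2$ that I will exploit throughout.

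First I compute the ratio $\ffunc{\subcube{}}{t}{\ell}{i}{a+M}{d}/\ffunc{\subcube{}}{t}{\ell}{i}{a}{d}$ for $M \in [1, i-a]$. Using \Cref{lem:U()computation}, the ratio of $U$-counts equals $\binom{i}{M}\,k^{-M}$ (up to a harmless factor $1/2$ when $M = i-a$), and iterating \Cref{lem:ratio:P(j):estimate} gives
\[
\frac{\upboundcubeintspan{a+M}}{\upboundcubeintspan{a}} \leq \frac{(a+M)!}{a!\,n^M}\, k^{M(2a+M+9-4\sqrt{n})/4}.
\]
Combining these with the bound $\binom{i}{M}(a+M)!/(a!\,n^M) \leq 4^M/M!$ (which follows from $i(a+M) \leq i^2 \leq (2\sqrt{n})^2 = 4n$, using $i \leq \criticaldimension - 1 \leq 2\sqrt{n}-3$) yields
\[
\frac{\ffunc{\subcube{}}{t}{\ell}{i}{a+M}{d}}{\ffunc{\subcube{}}{t}{\ell}{i}{a}{d}} \leq B(M) := \frac{4^M}{M!}\, k^{M(2a+M+5-4\sqrt{n})/4}.
\]

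Next I show $B(M+1)/B(M) \leq 1/2$ for every $M \geq 1$ and $n$ sufficiently large. A direct computation gives $B(M+1)/B(M) = (4/(M+1))\, k^{(a+M+3-2\sqrt{n})/2}$: for $M \geq 7$ the prefactor is already at most $1/2$ and the $k$-exponent is at most $0$ (using $a + M \leq i \leq 2\sqrt{n}-3$), so the ratio is at most $1/2$; for $1 \leq M \leq 7$, the bound $a \leq 3\sqrt{n}/2$ forces the $k$-exponent down to at most $10 - \sqrt{n}/4$, which is very negative. Hence $\sum_{M=1}^{i-a} B(M) \leq 2B(1) = 8\, k^{(a+3-2\sqrt{n})/2} \leq 8\, k^{3/2-\sqrt{n}/4}$, and for $n$ large enough this is $\leq k^{-\sqrt{n}/6}$ uniformly in $2 \leq k \leq 2^{\sqrt{n}}$ (the defining condition reduces to $\sqrt{n}/12 \geq 3/2 + \log_k 8$). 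Substituting back yields $\sum_{j=a+1}^{i} \ffunc{\subcube{}}{t}{\ell}{i}{j}{d} \leq k^{-\sqrt{n}/6}\, \ffunc{\subcube{}}{t}{\ell}{i}{a}{d}$, which proves the lemma.

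The main obstacle is the algebraic bookkeeping in the first step: telescoping the $k$-exponents from $M$ iterated applications of \Cref{lem:ratio:P(j):estimate} into the clean closed form $M(2a+M+5-4\sqrt{n})/4$, and verifying that the polynomial prefactors $\binom{i}{M}(a+M)!/(a!\,n^M)$ do not erode the ensuing quadratic-in-$M$ decay of the $k$-exponent. Once $B(M)$ is in the stated form, the two-case check of the ratio $B(M+1)/B(M)$ and the final comparison of $B(1)$ with $k^{-\sqrt{n}/6}$ are both routine.
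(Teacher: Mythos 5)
Your proof is correct and takes essentially the same route as the paper's: both identify $j^*=\ell-i-d$ as the dominant index, compare terms via \Cref{lem:U()computation} and \Cref{lem:ratio:P(j):estimate}, and exploit the key geometric fact that $j^*\leq \ell/2\leq 3\sqrt{n}/2$ while $j^*+M\leq i\leq \criticaldimension-1$. The only difference is organizational — you telescope the ratios into the closed form $B(M)$ and sum a geometric series with ratio $1/2$, whereas the paper shows each consecutive ratio is at most $1$ and bounds the tail crudely by $i$ times the first excess term — and both yield the factor $1+k^{-\sqrt{n}/6}$.
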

\begin{proof} Observe first that $j$ is the only index of $\ffunc{\subcube{}}{t}{\ell}{i}{j}{d}$ that is not fixed on the left and right hand sides of the inequality in \eqref{lem:sumdom:j}. To ease notation, let us define
\begin{align}
    \ffuncredj{j} \coloneqq \ffunc{\subcube{}}{t}{\ell}{i}{j}{d}  \overset{\eqref{def:ffunc2}}{=}\union{\subcube{}}{t}{\ell}{i}{j}{d} \upboundcubeintspan{i} \upboundcubeintspan{j}\label{eq:ffuncred:j}
\end{align} 
and $$\optimalj \coloneqq \ell -i -d.$$
   By \Cref{lem:U()computation},  for all $j < \optimalj$ or  $j \geq \criticaldimension$ it holds that $ \union{\subcube{m}}{t}{\ell}{i}{j}{d} = 0$. For $\optimalj \leq j \leq j+1\leq \criticaldimension$ we have 
    \begin{align*}
        \frac{ \union{\subcube{}}{t}{\ell}{i}{j+1}{d}}{ \union{\subcube{}}{t}{\ell}{i}{j}{d}} =\frac{\binom{i}{i+j+1+d-\ell}\left(\frac{1}{2}\right)^{\indicator{i = j+1}}}{ \binom{i}{i+j+d-\ell}k\left(\frac{1}{2}\right)^{\indicator{i = j}}} \leq \frac{\ell-j-d}{k(i+j+d+1-\ell)},
    \end{align*} implying together with \Cref{lem:ratio:P(j):estimate} that  for all $j \geq \optimalj$ 
    
    \begin{align}
  \frac{\ffuncredj{j+1}}{\ffuncredj{j}} \overset{\eqref{eq:ffuncred:j}}&{=}  \frac{\upboundcubeintspan{i}\upboundcubeintspan{j+1} \union{\subcube{}}{t}{\ell}{i}{j+1}{d}}{\upboundcubeintspan{i }\upboundcubeintspan{j} \union{\subcube{}}{t}{\ell}{i}{j}{d}} \leq k^{\frac{j+3-2\sqrt{n}}{2}} \frac{(j+1)(\ell-d-j)}{ n(j+1+i+d-\ell)} \label{eq:sumdom:j:eq1}\\
  &\leq 6k^{\frac{j+3-2\sqrt{n}}{2}},\label{eq:sumdom:j:eq3}
    \end{align} where the last inequality uses $j +1\leq \criticaldimension:= \rounddown{2\sqrt{n}}-2 \leq 2\sqrt{n}$  and $\ell \leq 3\sqrt{n}$.
    
    Next we aim to show that $ \frac{\ffuncredj{j+1}}{\ffuncredj{j}} \leq 1$. Indeed, if $j \leq \frac{5 \sqrt{n}}{3}$ we obtain from \eqref{eq:sumdom:j:eq3} that 
    \begin{align}
       \frac{\ffuncredj{j+1}}{\ffuncredj{j}} \leq 6 k^{\frac{j+3-2\sqrt{n}}{2}} \leq 6 k^{-\frac{\sqrt{n}}{7}} \leq 1.\label{eq:sumdom:j:eqx}
    \end{align} If $j >\frac{5 \sqrt{n}}{3}$, because  $j +1 \leq \criticaldimension-1\leq 2\sqrt{n}-3$ and $j+1+i+d-\ell \geq \frac{10 \sqrt{n}}{3} - 3\sqrt{n} \geq \frac{\sqrt{n}}{3}$, we obtain from   \eqref{eq:sumdom:j:eq1}
     \begin{align}
        \frac{\ffuncredj{j+1}}{\ffuncredj{j}} \leq k^{\frac{j+3-2\sqrt{n}}{2}} \frac{(j+1)(\ell-d-j)}{ n(j+1+i+d-\ell)} \leq  \frac{\parens{2\sqrt{n}}\parens{4\sqrt{n}}}{n \frac{\sqrt{n}}{3}} \leq \frac{24}{\sqrt{n}} \leq1.\label{eq:sumdom:j:eqy}
    \end{align}
    It thus follows from \eqref{eq:sumdom:j:eqx}  and \eqref{eq:sumdom:j:eqy} that   $$\frac{\ffuncredj{j+1}}{\ffuncredj{j}} \leq 1 \quad \text{for all} \quad  j \geq \optimalj$$ and hence 
    \begin{align}\label{eq:sumdom:j:eq2}
        \sum_{j=0}^i \ffuncredj{j} \leq \ffuncredj{\optimalj} + i \ffuncredj{\optimalj+1}.
    \end{align}
    Furthermore, as $\optimalj\leq i$, it follows from $\ell = i+\optimalj+d$ that $\optimalj \leq \frac{\ell}{2} \leq \frac{3\sqrt{n}}{2}$, implying together with \eqref{eq:sumdom:j:eq3}
    \begin{align*}
       \frac{\ffuncredj{\optimalj+1}}{\ffuncredj{\optimalj}} \leq 6 k^{\frac{\optimalj+3-2\sqrt{n}}{2}} \leq 6k^\frac{3-\frac{1}{2}\sqrt{n}}{2} \leq k^{-\frac{\sqrt{n}}{5}}, 
    \end{align*} and thus together with \eqref{eq:sumdom:j:eq2} we have
    \begin{align*}
        \sum_{j=0}^i \ffuncredj{j} \leq\ffuncredj{\optimalj} \left(1+ik^{-\frac{\sqrt{n}}{5}}\right) \leq\ffuncredj{\optimalj}\left(1+k^{-\frac{\sqrt{n}}{6}}\right),
    \end{align*} as claimed.
\end{proof}

We continue by considering different values of $i$. Here we need to make a case distinction depending on whether $\ell > \criticaldimension$ or not.
\begin{lem}\label{lem:sumdom:i}
(a)    For $\criticaldimension < \ell \leq 3\sqrt{n} $ and $d\in \{0,1,2\}$ it holds that
    \begin{align}\label{eq:lem:sumdom:i:secondpart}
        \sum_{i=0}^{\criticaldimension-1} \ffunc{G}{\criticaldimension}{\ell}{i}{\ell-i-d}{d} \leq \left(1+k^{-\frac{\sqrt{n}}{6}}\right) \ffunc{G}{\criticaldimension}{\ell}{\criticaldimension-1}{\ell-(\criticaldimension-1)-d}{d}.
    \end{align}
(b) For $15 \leq m \leq \criticaldimension$ let $\subcube{}$ be a \projection{} of dimension $m$. Then for   $d\in \{0,1,2\}$ we have
    \begin{align}
        &\sum_{i=0}^{m-1} \ffunc{\subcube{}}{m}{m}{i}{m-i-d}{d}   \leq \left(1+k^{-\frac{m-11}{2}}\right) \ffunc{\subcube{}}{m}{m}{m-1-\indicator{d=2}}{1-d+\indicator{d=2}}{d}.\label{eq:lem:sumdom:i:secondpartb}
    \end{align} 
   
\end{lem}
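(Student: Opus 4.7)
The plan is to show, for fixed $\ell$ and $d$, that the summand (call it $g(i)$) is essentially maximised at the top of the admissible range $i=i_{\max}$ (with $i_{\max}=\criticaldimension-1$ in (a) and $i_{\max}=m-1-\indicator{d=2}$ in (b)), and that the remaining terms form a geometric tail whose total weight is absorbed into the stated multiplicative factor. The strategy is the same for both parts; only the numerical values of the ratios differ.

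First I would bring $g(i)$ into an explicit $i$-dependent form. Substituting $j=\ell-i-d$ into Lemma~\ref{lem:U()computation} collapses $\binom{i}{i+j+d-\ell}$ to $1$ and the $k$-power to an $i$-independent factor, while $\binom{\ell}{i}\binom{\ell-i}{d}\cdot i!\cdot j! = \ell!/d!$. Combining with \eqref{def:upboundcubeintspan} and the identity $i^{2}+j^{2}=(\ell-d)^{2}-2ij$, one obtains
\[
g(i) \;=\; B\cdot c_i\, c_{\ell-i-d}\, k^{-i(\ell-i-d)/2}\cdot 2^{-\indicator{2i=\ell-d}},
\]
where $B$ does not depend on $i$. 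The admissibility constraint $j \le i$ forces $i \ge \lceil (\ell-d)/2 \rceil$, placing $i$ precisely on the branch where $-i(\ell-i-d)$ is increasing in $i$.

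From this representation the consecutive ratio becomes
\[
\frac{g(i+1)}{g(i)} \;=\; \frac{c_{i+1}c_{\ell-i-1-d}}{c_i c_{\ell-i-d}}\cdot k^{(2i-\ell+d+1)/2}\cdot 2^{\pm 1},
\]
via the identity $i(\ell-i-d)-(i+1)(\ell-i-1-d) = 2i+1+d-\ell$. For admissible $i$ the exponent of $k$ is at least $1/2$ and grows by one with each increment. A short case analysis over $d\in\{0,1,2\}$ at $i=i_{\max}-1$ gives $g(i_{\max})/g(i_{\max}-1) \ge c\, k^{(m-3)/2}$ in (b), and using $\ell\le 3\sqrt{n}$ together with $\criticaldimension \ge 2\sqrt{n}-3$ gives $g(i_{\max})/g(i_{\max}-1) \ge c\, k^{(\sqrt{n}-9)/2}$ in (a), for some absolute constant $c>0$.

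Finally, since the $k$-exponent in the ratio is non-decreasing in $i$, the sum $\sum_{i<i_{\max}} g(i)$ is a rapidly decaying geometric tail whose leading term is $g(i_{\max}-1)$; adding a crude bound $1+k^{-1/2}+k^{-1}+\cdots=O(1)$ for the remaining terms yields $\sum_{i<i_{\max}} g(i) \le O(1)\cdot g(i_{\max})\cdot k^{-(m-3)/2}$ in (b) and $O(1)\cdot g(i_{\max})\cdot k^{-(\sqrt{n}-9)/2}$ in (a). The comfortable gap between $(m-3)/2$ and $(m-11)/2$ (respectively between $(\sqrt{n}-9)/2$ and $\sqrt{n}/6$) absorbs the $O(1)$ factor and produces the claimed bounds $1+k^{-(m-11)/2}$ and $1+k^{-\sqrt{n}/6}$. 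The main technical obstacle is the non-monotonicity of $c_m$ (which decreases for $m\le 14$ and then grows): because $i$ and $\ell-i-d$ move in opposite directions, at most one of the two $c$-ratios $c_{i+1}/c_i$ and $c_{\ell-i-1-d}/c_{\ell-i-d}$ exceeds $1$, and by \eqref{eq:cubeintspanconst_bounds} together with the piecewise definition of $c_m$ their product stays bounded by an absolute constant across the full admissible range.
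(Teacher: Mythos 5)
Your proposal is correct and follows essentially the same route as the paper: write the summand explicitly, show it is maximised at the top admissible index $i_{\max}$ via consecutive ratios, and absorb the remaining terms plus the penultimate-to-last ratio (of order $k^{-(m-3)/2}$, resp. $k^{-(\sqrt{n}-9)/2}$) into the stated multiplicative factor. The one place where you are looser than the paper is the claim that the tail is genuinely geometric: with your crude constant the consecutive-ratio bound $C\,k^{-1/2}$ at the bottom of the admissible range can exceed $1$ when $k=2$, so one must either argue via the quadratically accumulating exponents in the telescoped products, or, as the paper does, factor the ratio into the $\Phi$-part and the $U$-part so that the $(j^{*}+1)/i$ and $i/(j^{*}+1)$ factors cancel and every consecutive ratio is uniformly at most $9/10$; either way the conclusion stands.
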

\begin{proof}
We first derive some properties that are relevant for both (a) and (b). Assume $\subcube{}$ is a \projection{} of  $G$ and let $t \in [n]$ (in fact, we will take $t=\criticaldimension$ for part (a) and $t=m$ for  part (b)). To ease notation, for  $(\ell,i,d) \in \triple{\subcube{}}{t}$ we set $\optimalj := \ell -i-d$ and define
    \begin{align}
        \ffuncredi{\subcube{}}{t}{i} \coloneqq \ffunc{\subcube{}}{t}{\ell}{i}{\optimalj}{d} \overset{\eqref{def:ffunc2}}{=}\union{\subcube{}}{t}{\ell}{i}{\optimalj}{d} \upboundcubeintspan{i} \upboundcubeintspan{\optimalj}.\label{eq:ffuncred:i}
    \end{align}
    We aim to bound the ratio $\frac{\ffuncredi{\subcube{}}{t}{i-1}}{\ffuncredi{\subcube{}}{t}{i}}$ from above, so let us assume for now that we are on the domain where $\ffuncredi{\subcube{}}{t}{\cdot}$ is non-zero.  First observe that by  \eqref{eq:ratio:P(j):equality} and \eqref{eq:cubeintspanconst_bounds} it holds that 
    \begin{align}\label{eq:sumdom:i:eq1}
        \frac{\upboundcubeintspan{i-1}\upboundcubeintspan{\optimalj+1}}{\upboundcubeintspan{i}\upboundcubeintspan{\optimalj}} \overset{\eqref{eq:ratio:P(j):equality}}{=} 
        k^{-\frac{i-\optimalj-1}{2}} \frac{\upboundcubeintspanconstant{i-1}\upboundcubeintspanconstant{\optimalj+1}}{\upboundcubeintspanconstant{i}\upboundcubeintspanconstant{\optimalj}}\frac{(\optimalj+1)}{i} \overset{\eqref{eq:cubeintspanconst_bounds}}{\leq} k^{-\frac{i-\optimalj-2}{2}} \frac{ 9(\optimalj+1)}{10i} .
    \end{align}
Furthermore, it follows from \Cref{lem:U()computation} that
\begin{align}\label{eq:sumdom:i:eq2}
     \frac{\union{\subcube{}}{m}{\ell}{i-1}{\optimalj+1}{d}}{\union{\subcube{}}{m}{\ell}{i}{\optimalj}{d}} = \frac{\binom{\ell}{i-1}\binom{\ell-i+1}{d}}{\binom{\ell}{i}\binom{\ell-i}{d}} \frac{\left(\frac{1}{2}\right)^{\indicator{i-1 = \optimalj+1}}}{\left(\frac{1}{2}\right)^{\indicator{i = \optimalj}}} \leq \frac{i(\ell-i+1)}{(\ell-i+1)(\ell-i-d+1)} = \frac{i}{\optimalj+1}.
\end{align}
From the definition \eqref{eq:ffuncred:i} of  $\ffuncredi{\subcube{}}{t}{\cdot}$ as well as from \eqref{eq:sumdom:i:eq1} and \eqref{eq:sumdom:i:eq2} we have
\begin{align}
    \frac{\ffuncredi{\subcube{}}{t}{i-1}}{\ffuncredi{\subcube{}}{t}{i}} \overset{\eqref{eq:ffuncred:i}}&{=}  \frac{\upboundcubeintspan{i-1}\upboundcubeintspan{\optimalj+1}\union{\subcube{}}{m}{\ell}{i-1}{\optimalj+1}{d}}{\upboundcubeintspan{i}\upboundcubeintspan{\optimalj}\union{\subcube{}}{m}{\ell}{i}{\optimalj}{d}}  \notag\\ \overset{\eqref{eq:sumdom:i:eq1}, \eqref{eq:sumdom:i:eq2}}&{\leq} k^{-\frac{i-\optimalj-2}{2}} \frac{ 9(\optimalj+1)}{10i} \frac{i}{\optimalj+1} \notag\\ &=   k^{-\frac{i-\optimalj-2}{2}} \frac{9}{10}\label{eq:ffunc:diagonalbound1}\\
   &<  k^{-\frac{i-\optimalj-2}{2}}.\label{eq:ffunc:diagonalbound}
\end{align}
From \eqref{eq:ffunc:diagonalbound1} we obtain
\begin{align}\label{eq:ffunc:diagonalbound2}
    \frac{\ffuncredi{\subcube{}}{t}{i-1}}{\ffuncredi{\subcube{}}{t}{i}}  &\leq \frac{9}{10}, 
\end{align}
because $i-1 \geq \optimalj+1$ on the domain where $\ffuncredi{\subcube{}}{t}{i-1}$ is non-zero. Thus the function $i\mapsto \ffuncredi{\subcube{}}{t}{i}$ is increasing in $i$ and it remains to determine the largest value of $i$ at which $\ffuncredi{\subcube{}}{t}{i}$ is non-zero. We proceed now with a case distinction, starting with the proof of \eqref{eq:lem:sumdom:i:secondpart}. 

To prove part (a)  take $ \subcube{} =G$ and  $t=\criticaldimension$. In this case, the largest value $i$ may take is at $i = \criticaldimension-1$ at which $\optimalj := \ell -i-d = \ell-\criticaldimension-d+1$. Checking all conditions from the definition of the set $\indextriplesum{G}{\criticaldimension}$ of admissible indices it indeed holds that $(\ell,\criticaldimension-1,\optimalj,d) \in \indextriplesum{G}{\criticaldimension}$ and thus it follows from \eqref{eq:ffunc:diagonalbound2} that 
\begin{align}\label{eq:sumdom:i:eq3}
     \sum_{i=0}^{\criticaldimension-1} \ffuncredi{G}{\criticaldimension}{i} \overset{\eqref{eq:ffunc:diagonalbound2}}{\leq} \ffuncredi{G}{\criticaldimension}{\criticaldimension-1} + \ffuncredi{G}{\criticaldimension}{\criticaldimension-2} \sum_{i=0}^{\criticaldimension-2} \left(\frac{9}{10}\right)^i \leq \ffuncredi{G}{\criticaldimension}{\criticaldimension-1} + 10\ffuncredi{G}{\criticaldimension}{\criticaldimension-2},
\end{align}
where the last inequality follows from a standard bound on geometric series. 
Furthermore, it follows from \eqref{eq:ffunc:diagonalbound} (by taking $H=G$, $t = \criticaldimension$, $i=\criticaldimension-1$ and $\optimalj := \ell -i-d$) that
\begin{align}
    \frac{\ffuncredi{G}{\criticaldimension}{\criticaldimension-2}}{\ffuncredi{G}{\criticaldimension}{\criticaldimension-1}} \overset{\eqref{eq:ffunc:diagonalbound}}{\leq}  k^{-\frac{(\criticaldimension-1)-(\ell-\criticaldimension+1-d)-2}{2}} \leq k^{-\frac{\sqrt{n}}{4}},
    \label{eq:ffuncredi6}
\end{align} 
where the last inequality is because $(\criticaldimension-1)-(\ell-\criticaldimension+1-d) -2\geq 2 \criticaldimension-\ell-4 \geq \frac{\sqrt{n}}{2}$.
Combining \eqref{eq:sumdom:i:eq3} and \eqref{eq:ffuncredi6} implies
\begin{align*}
     \sum_{i=0}^{\criticaldimension-1} \ffuncredi{G}{\criticaldimension}{i} 
     \overset{\eqref{eq:sumdom:i:eq3}}{\leq}\ffuncredi{G}{\criticaldimension}{\criticaldimension-1}\left(1+ \frac{10\ffuncredi{G}{\criticaldimension}{\criticaldimension-2}}{\ffuncredi{G}{\criticaldimension}{\criticaldimension-1}} \right) 
      \overset{\eqref{eq:ffuncredi6}}&{\leq}\ffuncredi{G}{\criticaldimension}{\criticaldimension-1} \left(1+10 k^{-\frac{\sqrt{n}}{4}}\right)  \notag\\ 
      &\leq \ffuncredi{G}{\criticaldimension}{\criticaldimension-1} \left(1+k^{-\frac{\sqrt{n}}{6}}\right),
\end{align*} and thus obtaining \eqref{eq:lem:sumdom:i:secondpart}. 

To prove part (b) let  $\subcube{}$ be a projection of dimension $m$ and assume $15 \leq m \leq \criticaldimension$. Consider the function $\ffuncredi{\subcube{}}{m}{i}$ as defined in \eqref{eq:ffuncred:i} by taking $\ell=m$. Recalling that $i$ now may take values from $0$ to $m -1$, it follows from the definition of the set $\indextriplesum{G}{\criticaldimension}$ of admissible indices that $\ffuncredi{\subcube{}}{m}{i}$ is only zero in the case $i = m-1$ and $d=2$. Thus, $\ffuncredi{\subcube{}}{m}{i}$ takes its largest value at $i = m -1$ if $d \in \set{0,1}$, and  at $i = m-2$ if $d=2$. Hence, it follows (analogously to \eqref{eq:sumdom:i:eq3}) from \eqref{eq:ffunc:diagonalbound2}  that
\begin{align}\label{eq:ffuncredi7}
     \sum_{i=0}^{m-1} \ffuncredi{\subcube{}}{m}{i} \overset{\eqref{eq:ffunc:diagonalbound2}}&{\leq} \ffuncredi{\subcube{}}{m}{m-1-\indicator{d=2}} +\ffuncredi{\subcube{}}{m}{m-2-\indicator{d=2}} \sum_{i=0}^{m-2} \left(\frac{9}{10}\right)^i \notag\\
     &\leq \ffuncredi{\subcube{}}{m}{m-1-\indicator{d=2}} + 10\ffuncredi{\subcube{}}{m}{m-2-\indicator{d=2}}.
\end{align}
Furthermore, if  $d \in \set{0,1}$, we obtain from \eqref{eq:ffunc:diagonalbound}  (by taking $t = m$, $i = m-1$ and $\optimalj := \ell -i-d = m-i-d = 1-d$)  that  \begin{align}
   \frac{\ffuncredi{\subcube{}}{m}{m-2}}{\ffuncredi{\subcube{}}{m}{m-1}} \overset{\eqref{eq:ffunc:diagonalbound}}{\leq}  k^{-\frac{m-1-(1-d)-2}{2}} \leq k^{-\frac{m-4}{2}},\label{eq:ffuncredi1}
\end{align} and, if $d = 2$,  it also follows from \eqref{eq:ffunc:diagonalbound} (by taking $t = m$, $i = m-2$ and $\optimalj := \ell -i-d = m-i-2 = 0$)  that 
\begin{align}
   \frac{\ffuncredi{\subcube{}}{m}{m-3}}{\ffuncredi{\subcube{}}{m}{m-2}} \overset{\eqref{eq:ffunc:diagonalbound}}{\leq}  k^{-\frac{m-4}{2}}.\label{eq:ffuncredi2}
\end{align} 
Combining \eqref{eq:ffuncredi7}, \eqref{eq:ffuncredi1} and  \eqref{eq:ffuncredi2} implies
\begin{align*}
    \sum_{i=0}^{m-1} \ffuncredi{\subcube{}}{m}{i} \overset{\eqref{eq:ffuncredi7}}&{\leq} \ffuncredi{\subcube{}}{m}{m-1-\indicator{d=2}}\left(1+ \frac{10\ffuncredi{\subcube{}}{m}{m-2-\indicator{d=2}}}{\ffuncredi{\subcube{}}{m}{m-1-\indicator{d=2}}} \right)  \\
    \overset{\eqref{eq:ffuncredi1},\eqref{eq:ffuncredi2}}&{\leq} \ffuncredi{\subcube{}}{m}{m-1-\indicator{d=2}}\left(1+ 10 k^{-\frac{m-4}{2}} \right) \\
    &\leq \ffuncredi{\subcube{}}{m}{m-1-\indicator{d=2}}\left(1+ k^{-\frac{m-11}{2}} \right),
\end{align*}where we used the fact that $10 \leq 2^\frac{7}{2} \leq k^\frac{7}{2}$, completing the proof.
\end{proof}
Next, we consider all the possible values of $d\in \{0,1,2\}$. 

\begin{lem}\label{lem:sumdom:d}
(a)    For $\criticaldimension < \ell \leq 3\sqrt{n} $ it holds that
   \begin{align}\label{eq:lem:sumdom:d:secondpart}
        \sum_{d=0}^2 \ffunc{G}{\criticaldimension}{\ell}{\criticaldimension-1}{\ell-(\criticaldimension-1)-d}{d} \leq \left(1+  k^{-\frac{\sqrt{n}}{6}}\right)\ffunc{G}{\criticaldimension}{\ell}{\criticaldimension-1}{\ell-\criticaldimension-1}{2}.
    \end{align}
(b) For $15 \leq m \leq \criticaldimension$ let $\subcube{}$ be a \projection{} of dimension $m$. Then we have
    \begin{align}
        \sum_{d=0}^2 \ffunc{\subcube{}}{m}{m}{m-1-\indicator{d=2}}{1-d+\indicator{d=2}}{d} \leq \left(1+\frac{3}{n} \right) \ffunc{\subcube{}}{m}{m}{m-2}{0}{2}.\label{eq:lem:sumdom:d:secondpartb}
    \end{align}   
\end{lem}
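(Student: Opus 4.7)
The plan is to mimic the strategy from the proof of \Cref{lem:sumdom:i}: I would compute explicit ratios between consecutive terms (indexed by $d$) and show that the $d=2$ term dominates. Each such ratio factors into a ratio of $\union{}$-values, obtained from \Cref{lem:U()computation}, times a ratio of $\upboundcubeintspan{\cdot}$-values, controlled by \Cref{lem:ratio:P(j):estimate}.

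For part (a), I fix $i = \criticaldimension - 1$ and $\ell$ and observe that because the third argument is $j = \ell - i - d$, the exponent $m + \ell - i - j - d = m = n$ of $k$ in $\union{}$ does not depend on $d$. Hence only the factors $\binom{\ell-i}{d}$ and $(k-1)^d$ in $\union{}$ change with $d$, yielding
\begin{align*}
    \frac{\union{G}{\criticaldimension}{\ell}{\criticaldimension-1}{\ell-\criticaldimension-d}{d+1}}{\union{G}{\criticaldimension}{\ell}{\criticaldimension-1}{\ell-\criticaldimension+1-d}{d}} = \frac{(\ell-\criticaldimension+1-d)(k-1)}{d+1},
\end{align*}
up to at most a factor of $2$ stemming from the $\indicator{i=j}$ term. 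Combining this with \Cref{lem:ratio:P(j):estimate} applied to $\upboundcubeintspan{\ell-\criticaldimension+1-d}/\upboundcubeintspan{\ell-\criticaldimension-d}$ yields
\begin{align*}
    \frac{\ffunc{G}{\criticaldimension}{\ell}{\criticaldimension-1}{\ell-\criticaldimension+1-d}{d}}{\ffunc{G}{\criticaldimension}{\ell}{\criticaldimension-1}{\ell-\criticaldimension-d}{d+1}} \leq \frac{d+1}{(k-1)n}\, k^{(\ell-\criticaldimension-d+5-2\sqrt{n})/2}.
\end{align*}
Since $\criticaldimension = \rounddown{2\sqrt{n}}-2$ and $\ell \leq 3\sqrt{n}$, the exponent of $k$ is at most $(8-\sqrt{n})/2$, so each consecutive ratio is much smaller than $\tfrac{1}{3}k^{-\sqrt{n}/6}$ for large $n$. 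Summing the three terms as a geometric-type series then gives the desired bound \eqref{eq:lem:sumdom:d:secondpart}.

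For part (b), I apply \Cref{lem:U()computation} directly to the three admissible triples $(m-1,1,0)$, $(m-1,0,1)$, $(m-2,0,2)$ to obtain the closed-form expressions
\begin{align*}
    \ffunc{\subcube{}}{m}{m}{m-1}{1}{0} &= m\, k^m\, \upboundcubeintspan{m-1}\upboundcubeintspan{1},\\
    \ffunc{\subcube{}}{m}{m}{m-1}{0}{1} &= m\, k^m(k-1)\, \upboundcubeintspan{m-1}\upboundcubeintspan{0},\\
    \ffunc{\subcube{}}{m}{m}{m-2}{0}{2} &= \tbinom{m}{2}\, k^m(k-1)^2\, \upboundcubeintspan{m-2}\upboundcubeintspan{0}.
\end{align*}
The ratio of the $d=0$ to the $d=1$ term is $\upboundcubeintspan{1}/((k-1)\upboundcubeintspan{0})$, which by \Cref{lem:ratio:P(j):estimate} is at most $k^{(5-2\sqrt{n})/2}/((k-1)n)$ and hence negligible. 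The ratio of the $d=1$ to the $d=2$ term is $2\upboundcubeintspan{m-1}/((m-1)(k-1)\upboundcubeintspan{m-2})$; applying \Cref{lem:ratio:P(j):estimate} with $m \leq \criticaldimension \leq 2\sqrt{n}-2$ bounds it by $2k^{1/2}/((k-1)n) \leq 2\sqrt{2}/n$, since $k^{1/2}/(k-1)$ is maximised over $k\geq 2$ at $k=2$. Summing the three terms gives the claimed factor $1+3/n$ for $n$ large enough.

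I do not anticipate any conceptually hard step: the only nuisance is keeping track of the $\indicator{i=j}$ factor from \Cref{lem:U()computation}, which could shift a ratio by at most a factor of $2$, but this is harmless because the target ratios are either exponentially small in $\sqrt{n}$ (part (a)) or already $O(1/n)$ (part (b)). Borderline values of $\ell$ close to $\criticaldimension$ in part (a) are also unproblematic, since the leading constant in front of $k^{(8-\sqrt{n})/2}$ remains bounded throughout.
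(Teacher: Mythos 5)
Your proposal is correct and follows essentially the same route as the paper: compare the three terms via ratios of consecutive $d$-values, each ratio factoring into a $\union{}{}{}{}{}{}$-ratio from \Cref{lem:U()computation} and a $\upboundcubeintspan{\cdot}$-ratio from \Cref{lem:ratio:P(j):estimate}. The only noteworthy deviation is in part (b): the paper sharpens \Cref{lem:ratio:P(j):estimate} to $\frac{\upboundcubeintspan{m-1}}{\upboundcubeintspan{m-2}} \leq \frac{(k-1)(m-1)}{n}k^{\frac{m+2-2\sqrt{n}}{2}}$ so that the $(k-1)$ cancels exactly and the $d=1$ term is at most $\frac{2}{n}$ of the $d=2$ term, whereas you use the lemma as stated, pay the factor $k^{1/2}/(k-1)\leq\sqrt{2}$, and observe that $2\sqrt{2}<3$ still yields the claimed $1+\frac{3}{n}$ — a slightly cruder but self-contained shortcut.
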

\begin{proof}
Let us start with \eqref{eq:lem:sumdom:d:secondpart}. Note first that the function $d\mapsto \union{G}{\criticaldimension}{\ell}{\criticaldimension-1}{\ell-(\criticaldimension-1)-d}{d}$ is clearly increasing in $d$.
   Using \Cref{lem:ratio:P(j):estimate} it thus follows that for $d \in \{0,1\}$,
   \begin{align*}
       \frac{\ffunc{G}{\criticaldimension}{\ell}{\criticaldimension-1}{\ell-(\criticaldimension-1)-d}{d}}{\ffunc{G}{\criticaldimension}{\ell}{\criticaldimension-1}{\ell-(\criticaldimension-1)-d-1}{d+1}} \leq \frac{\upboundcubeintspan{\ell-\criticaldimension-d+1}}{\upboundcubeintspan{\ell-\criticaldimension-d}} \leq \frac{\ell-\criticaldimension-d+1}{n} k^{\frac{\ell-\criticaldimension-d+5-2\sqrt{n}}{2}} \leq \frac{1}{2}k^{-\frac{\sqrt{n}}{6}}, 
   \end{align*}where the last inequality uses $\ell-\criticaldimension-d+1 \leq \sqrt{n}+4$,  immediately implying \eqref{eq:lem:sumdom:d:secondpart}.

   Moving on to the case $15 \leq m \leq \criticaldimension$, an analogous argument shows that 
   \begin{align}\label{eq:sumdom:d:eq1}
       \frac{\ffunc{\subcube{}}{m}{m}{m-1}{1}{0}}{\ffunc{\subcube{}}{m}{m}{m-1}{0}{1}} \leq \frac{1}{2}.
   \end{align} For the ratio $ \frac{\ffunc{\subcube{}}{m}{m}{m-1}{0}{1}}{\ffunc{\subcube{}}{m}{m}{m-2}{0}{2}}$ we need to be a bit more precise. A direct computation shows that 
   \begin{align}\label{eq:sumdom:d:eq2}
      \frac{\union{\subcube{}}{m}{m}{m-1}{0}{1}}{\union{\subcube{}}{m}{m}{m-2}{0}{2}} =  \frac{2}{(k-1)(m-1)} 
   \end{align}and, being a bit more careful in the estimates in \Cref{lem:ratio:P(j):estimate}, it can be shown that 
    \begin{align}\label{eq:sumdom:d:eq3}
        \frac{\upboundcubeintspan{m-1}}{\upboundcubeintspan{m-2}} \leq \frac{(k-1)(m-1)}{n}k^{\frac{m+2-2\sqrt{n}}{2}}.
    \end{align}
Putting these together, we obtain
    \begin{align}\label{eq:sumdom:d:eq4}
        \frac{\ffunc{\subcube{}}{m}{m}{m-1}{0}{1}}{\ffunc{\subcube{}}{m}{m}{m-2}{0}{2}} \overset{\eqref{def:ffunc2}}&{=} \frac{\union{\subcube{}}{m}{m}{m-1}{0}{1}}{\union{\subcube{}}{m}{m}{m-2}{0}{2}}\frac{\upboundcubeintspan{m-1}}{\upboundcubeintspan{m-2}}\notag \\ \overset{\eqref{eq:sumdom:d:eq2}}&{\leq} \frac{2}{(k-1)(m-1)} \frac{\upboundcubeintspan{m-1}}{\upboundcubeintspan{m-2}} \notag \\
        \overset{\eqref{eq:sumdom:d:eq3}}&{\leq}  \frac{2}{(k-1)(m-1)} \frac{(k-1)(m-1)}{n}k^{\frac{m+2-2\sqrt{n}}{2}} \leq \frac{2}{n},
    \end{align} where the last inequality uses $m \leq \criticaldimension:=\rounddown{2\sqrt{n}}-2 \leq 2\sqrt{n}-2$. Combining  \eqref{eq:sumdom:d:eq1} and \eqref{eq:sumdom:d:eq4} now yields
    \begin{align*}
         \sum_{d=0}^2 \ffunc{\subcube{}}{m}{m}{m-1-\indicator{d=2}}{1-d+\indicator{d=2}}{d} \overset{\eqref{eq:sumdom:d:eq1}}&{\leq} \frac{3}{2}\ffunc{\subcube{}}{m}{m}{m-1}{0}{1} + \ffunc{\subcube{}}{m}{m}{m-2}{0}{2} 
         \\\overset{\eqref{eq:sumdom:d:eq4}}&{\leq} \left(1+\frac{3}{n}\right)\ffunc{\subcube{}}{m}{m}{m-2}{0}{2},
    \end{align*} completing the proof. 
\end{proof}

Now we have all the tools to show \Cref{lem:triplesumdomination}.
\begin{proof}[Proof of \Cref{lem:triplesumdomination}]
Let $15\leq m \leq \criticaldimension$ and let $\subcube{}$ be a \projection{} of dimension $m$. Observe first that by definition of $\indextriplesum{\subcube{}}{m}$, any indices $(\ell,i,j,d) \in \indextriplesum{\subcube{}}{m}$ have to satisfy $\ell = m,\ d \in \set{0,1,2}$ and $ 0\leq j \leq i \leq \ell-1$. 
It thus follows from \Cref{lem:sumdom:j} that 
\begin{align}\label{eq:proofof_triplesumdom:eq1}
    \sum_{(\ell,i,j,d) \in \indextriplesum{\subcube{}}{m}} \ffunc{\subcube{}}{m}{\ell}{i}{j}{d} 
    &=  \sum_{d=0}^2 \sum_{i=0}^{m-1} \sum_{j=0}^{i} \ffunc{\subcube{}}{m}{m}{i}{j}{d} \nonumber
    \\\overset{\eqref{eq:varyingj}}&{\leq}  \left(1+k^{-\frac{\sqrt{n}}{6}}\right) \sum_{d=0}^2 \sum_{i=0}^{m-1} \ffunc{\subcube{}}{m}{m}{i}{m-i-d}{d}.
\end{align}
Furthermore, by Lemmas \ref{lem:sumdom:i} (b) and \ref{lem:sumdom:d} (b) we obtain
\begin{align}
    \sum_{d=0}^2 \sum_{i=0}^{m-1} \ffunc{\subcube{}}{m}{m}{i}{m-i-d}{d} \nonumber\overset{\eqref{eq:lem:sumdom:i:secondpartb}}&{\leq} \left(1+k^{-\frac{m-11}{2}}\right) \sum_{d=0}^2 \ffunc{\subcube{}}{m}{m}{m-1-\indicator{d=2}}{1-d+\indicator{d=2}}{d} \notag
    \\ \overset{\eqref{eq:lem:sumdom:d:secondpartb}}&{\leq}  \left(1+ k^{-\frac{m
        -11}{2}}\right)\left(1+ \frac{3}{n}\right) \ffunc{\subcube{}}{m}{m}{m-2}{0}{2}. \label{eq:proofof_triplesumdom:eq2}
\end{align}
Combining \eqref{eq:proofof_triplesumdom:eq1} and \eqref{eq:proofof_triplesumdom:eq2} yields 
\begin{align*}
\sum_{(\ell,i,j,d) \in \indextriplesum{\subcube{}}{m}} \ffunc{\subcube{}}{m}{\ell}{i}{j}{d}  &\leq  \left(1+k^{-\frac{\sqrt{n}}{6}}\right) \left(1+ k^{-\frac{m-11}{2}}\right)\left(1+ \frac{3}{n}\right) \ffunc{\subcube{}}{m}{m}{m-2}{0}{2} \\
&\leq \left(1+ k^{-\frac{m-11}{2}}\right)\left(1+ \frac{4}{n}\right) \ffunc{\subcube{}}{m}{m}{m-2}{0}{2},
\end{align*} where the last inequality is because  $\left(1+k^{-\frac{\sqrt{n}}{6}}\right) \left(1+ \frac{3}{n}\right) \leq \left(1+ \frac{4}{n}\right)$ holds for large enough $n$. 
\end{proof}

\Cref{lem:triplesumdomination} now enables us to prove \Cref{lem:probintspanupperbound}. 
\begin{proof}[Proof of \Cref{lem:probintspanupperbound}]
We will show the statement by induction on $m$. The case $0\leq m \leq 14$ can easily be proved by a tedious enumeration and will be omitted. Below we assume  $15 \leq m \leq \criticaldimension$. Let $\subcube{}$ be a projection of dimension $m$ and assume that $\probcubeintspan{m'} \leq \upboundcubeintspan{m'}$ for all $m' < m$.  By \Cref{lem:perc_unionbound_reformulation} and using the induction hypothesis we obtain analogously as in \eqref{eq:triplesumbound} that 
    \begin{align*}
        \probcubeintspan{m} &\leq \sum_{(\ell,i,j,d) \in \indextriplesum{\subcube{}}{m}} \union{\subcube{m}}{m}{\ell}{i}{j}{d}\probcubeintspan{i} \probcubeintspan{j} \leq \sum_{(\ell,i,j,d) \in \indextriplesum{\subcube{}}{m}} \union{\subcube{}}{m}{\ell}{i}{j}{d}\upboundcubeintspan{i} \upboundcubeintspan{j} \\
        &= \sum_{(\ell,i,j,d) \in \indextriplesum{\subcube{}}{m}} \ffunc{\subcube{}}{m}{\ell}{i}{j}{d}.
    \end{align*}
    Furthermore, \Cref{lem:triplesumdomination} implies
    \begin{align*}
       \sum_{(\ell,i,j,d) \in \indextriplesum{\subcube{}}{m}}  \ffunc{\subcube{}}{m}{\ell}{i}{j}{d} \leq \left(1+ k^{-\frac{m
        -11}{2}}\right)\left(1+ \frac{4}{n}\right) \ffunc{\subcube{}}{m}{m}{m-2}{0}{2}, 
    \end{align*} yielding
    \begin{align}\label{eq:proof:upboundcubeintspan:eq2}
        \probcubeintspan{m}  \leq \left(1+ k^{-\frac{m
        -11}{2}}\right)\left(1+ \frac{4}{n}\right) \ffunc{\subcube{}}{m}{m}{m-2}{0}{2}.
    \end{align}
    By the definition of $\upboundcubeintspan{\cdot}$ and by \Cref{lem:U()computation} it holds that
    \begin{align}
        \ffunc{\subcube{}}{m}{m}{m-2}{0}{2} &= \upboundcubeintspan{m-2} \upboundcubeintspan{0} \union{\subcube{}}{m}{m}{m-2}{0}{2} \notag\\
        &= \upboundcubeintspanconstant{m-2}p^{\frac{m-2}{2}+1} (m-2)!\  2^{-\frac{m-2}{2}}(k-1)^{m-2} k^\frac{(m-2)^2+2(m-2)}{4} p \binom{m}{2} (k-1)^2 k^m \notag\\
        &=  \upboundcubeintspanconstant{m-2 }p^{\frac{m}{2}+1} m!\  2^{-\frac{m}{2}}(k-1)^{m} k^\frac{m^2+2m}{4} \notag\\
        &= \frac{\upboundcubeintspanconstant{m-2}}{\upboundcubeintspanconstant{m}} \upboundcubeintspan{m} \label{eq:proof:upboundcubeintspan:eq1} . 
    \end{align}
    Using  $\left(1+k^{-\frac{m-11}{2}}\right)\left(1+\frac{4}{n} \right) = \frac{\upboundcubeintspanconstant{m}}{\upboundcubeintspanconstant{m-1}} \leq \frac{\upboundcubeintspanconstant{m}}{\upboundcubeintspanconstant{m-2}} $, it follows together with \eqref{eq:proof:upboundcubeintspan:eq1}  that
   \begin{align}\label{eq:proof:upboundcubeintspan:eq3}
    &\left(1+k^{-\frac{m-11}{2}}\right)\left(1+\frac{4}{n} \right) \ffunc{\subcube{}}{m}{m}{m-2}{0}{2} 
    \leq \frac{\upboundcubeintspanconstant{m}}{\upboundcubeintspanconstant{m-2}} \frac{\upboundcubeintspanconstant{m-2}}{\upboundcubeintspanconstant{m}} \upboundcubeintspan{m} \leq \upboundcubeintspan{m}.
\end{align}
Combining \eqref{eq:proof:upboundcubeintspan:eq2} and \eqref{eq:proof:upboundcubeintspan:eq3} we obtain
\begin{align*}
    \probcubeintspan{m} \overset{\eqref{eq:proof:upboundcubeintspan:eq2}}{\leq} \left(1+ k^{-\frac{m
        -11}{2}}\right)\left(1+ \frac{4}{n}\right) \ffunc{\subcube{}}{m}{m}{m-2}{0}{2} \overset{\eqref{eq:proof:upboundcubeintspan:eq3}}{\leq}\upboundcubeintspan{m},
\end{align*} completing the induction and thus the proof. 
\end{proof}

Next, we show that the functions $\upboundcubeintspanconstant{m}$ are bounded from above by a universal constant.
\begin{lem}\label{lem:upperboundcubeintspanconst}
   For all $m \in [\criticaldimension]$ we have
    \begin{align*}
        \upboundcubeintspanconstant{m} = \Th{1}.
    \end{align*}
\end{lem}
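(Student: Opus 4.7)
The plan is to verify the bound separately for the two regimes in the piecewise definition of $\upboundcubeintspanconstant{m}$ and then show that iterating the recursion in the second regime contributes only a multiplicative constant.

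First I would handle the base regime $1 \leq m \leq 14$, where by definition $\upboundcubeintspanconstant{m} = \tfrac{1}{2}(9\sqrt{2}/10)^m$. This is trivially bounded below by $\tfrac{1}{2}(9\sqrt{2}/10)$ and above by $\tfrac{1}{2}(9\sqrt{2}/10)^{14}$, both positive absolute constants independent of $n$ and $k$.

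Next I would deal with the range $15 \leq m \leq \criticaldimension$. Unrolling the recursion from the definition gives
\begin{align*}
\upboundcubeintspanconstant{m} = \upboundcubeintspanconstant{14}\prod_{j=15}^{m}\left(1+ k^{-\frac{j-11}{2}}\right)\left(1+ \frac{4}{n}\right).
\end{align*}
Each factor is at least $1$, so the sequence is non-decreasing in $m$ and therefore bounded below by $\upboundcubeintspanconstant{14} = \Theta(1)$. For the upper bound, I apply $\log(1+x) \leq x$ and estimate
\begin{align*}
\log\frac{\upboundcubeintspanconstant{m}}{\upboundcubeintspanconstant{14}} \;\leq\; \sum_{j=15}^{m} k^{-\frac{j-11}{2}} + \sum_{j=15}^{m}\frac{4}{n}.
\end{align*}
The first sum is bounded by a geometric series $\sum_{i\geq 4} k^{-i/2} = \frac{k^{-2}}{1-k^{-1/2}}$, which is at most an absolute constant for all $k \geq 2$ (in particular it tends to $0$ as $k$ grows). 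The second sum has at most $\criticaldimension \leq 2\sqrt{n}$ terms, each equal to $4/n$, so it is bounded by $8/\sqrt{n} = o(1)$. Hence $\log(\upboundcubeintspanconstant{m}/\upboundcubeintspanconstant{14})$ is $O(1)$, so $\upboundcubeintspanconstant{m} = \Theta(1)$.

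The proof is essentially routine; the only mild subtlety is ensuring that the number of recursive factors is only $O(\sqrt{n})$ (which it is, since $m \leq \criticaldimension = \rounddown{2\sqrt{n}} - 2$), so that the $(1+4/n)$ factors accumulate to a bounded constant rather than blowing up. Once this observation is made, the conclusion follows immediately.
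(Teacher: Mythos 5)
Your proof is correct and follows essentially the same route as the paper's: unroll the recursion, use monotonicity for the lower bound, and control the product via $1+x\le e^x$, a geometric series in $k^{-1/2}$, and the accumulated $(1+4/n)$ factors. The only cosmetic difference is that the paper bounds the latter using $\criticaldimension\le n$ (so the exponent is $O(1)$ rather than $o(1)$), which already suffices; your sharper bound $\criticaldimension\le 2\sqrt{n}$ is not actually needed.
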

\begin{proof}
It is easy to see that the function $m\mapsto \upboundcubeintspanconstant{m}$ is increasing in $m$. Thus, it suffices to bound $\upboundcubeintspanconstant{\criticaldimension}$ from above and we obtain  
    \begin{align*}
    \upboundcubeintspanconstant{\criticaldimension} &= \frac{1}{2}\left(\frac{9 \sqrt{2}}{10}\right)^{14} \prod_{m=15}^{\criticaldimension} \left(1+ k^{-\frac{m-11}{2}}\right)\left(1+ \frac{4}{n}\right) \notag \\
        &\leq \Theta(1) \left(1+ \frac{4}{n}\right)^{\criticaldimension} \prod_{m=15}^{\criticaldimension} \expp{k^{-\frac{m-11}{2}}} \notag \\
        &\leq \Theta(1) \expp{\frac{4 \criticaldimension}{n}}\expp{\sum_{m = 0}^{\infty} k^{-\frac{m}{2}}} = \Theta(1),
\end{align*} where the last equality follows because $\criticaldimension:=\rounddown{2\sqrt{n}}-2 \leq n$. 
\end{proof}
We continue by proving \Cref{claim:expec_to_zero_largeterms}. 

\begin{proof}[Proof of \Cref{claim:expec_to_zero_largeterms}]
We aim to bound the function $\ffunc{G}{\criticaldimension}{\ell}{i}{j}{d}$ for all indices $(\ell,i,j,d) \in \indextriplesum{G}{\criticaldimension}^{(2)}$. 
Using  crude bounds we get
\begin{align}\label{eq:prf_of_expec_to_zero_largeterms:1}
    \union{G}{\criticaldimension}{\ell}{i}{j}{d} &\leq   \binom{n}{\ell} \binom{\ell}{i} \binom{\ell-i}{d} \binom{i}{j+i+d-\ell} k^{n+\ell-i-j-d} (k-1)^d \notag \\
        &\leq n^\ell \ell^{i+d} i^{j+i+d-\ell} k^{n+\ell} \notag \\
        &\leq n^{3\ell}k^{n+\ell}.
\end{align}
On the other hand, by the definition \eqref{def:upboundcubeintspan} of $\upboundcubeintspan{\cdot}$ and using $j! \leq i! \leq n^{\ell}$ and \Cref{lem:upperboundcubeintspanconst} we obtain 
\begin{align*}
    \upboundcubeintspan{i} \upboundcubeintspan{j} \overset{\eqref{def:upboundcubeintspan}}&{=} \upboundcubeintspanconstant{i}\upboundcubeintspanconstant{j} p^{\frac{i+j}{2}+2} (i!)(j!) 2^{-\frac{i+j}{2}}(k-1)^{2(i+j)} k^{\frac{1}{4}\left(i^2+j^2+2i+2j\right)} \\
    &\leq \Th{1} p^{\frac{i+j}{2}+2} n^{2\ell} k^{\frac{1}{4}\left(i^2+j^2+10i+10j\right)}.
\end{align*}
Furthermore, using $p \leq p_* \leq k^{-2\sqrt{n}+1}$ it follows that
\begin{align}\label{eq:prf_of_expec_to_zero_largeterms:2}
    \upboundcubeintspan{i} \upboundcubeintspan{j} \leq \Th{1} n^{2\ell} k^{\frac{1}{4}\left(i^2+j^2-(i+j)(4\sqrt{n}-12)+16\sqrt{n}-8\right)}.
\end{align}
In order to obtain a universal upper bound of \eqref{eq:prf_of_expec_to_zero_largeterms:2} for all indices $(i,j)$ under consideration, we focus on  the exponent of \eqref{eq:prf_of_expec_to_zero_largeterms:2} and consider the function 
$$h: \mathbb R^2\to \mathbb R, (x,y)\mapsto x^2+y^2-(x+y)(4\sqrt{n}-12)+16\sqrt{n}-8.$$
Note that the function $h(x,y)$ defines a circular paraboloid whose center is shifted to $(2\sqrt{n}-6,2\sqrt{n}-6)$.
Recalling $(\ell,i,j,d) \in \indextriplesum{G}{\criticaldimension}^{(2)}$, where 
\begin{align*}
    \indextriplesum{G}{\criticaldimension}^{(2)} \coloneqq \setbuilder{(\ell,i,j,d)\in \N_0^4}{\left(3 \sqrt{n} < \ell \leq \minp{}{i+j+d,\dimension{G}}\right) \land (j \leq i <\criticaldimension) \land (i+d <\ell) \land ( d \in \{0,1,2\})},
\end{align*} 
we note that the function $h(x,y)$ should however be considered only for $(i,j)\in \tilde I$, where 
$$\tilde I:=\setbuilder{(i,j)\in \N_0^2}{(j \leq i < \criticaldimension )\wedge (i+j \geq \ell -d \geq 3\sqrt{n}-1) \text{ for }d\in \{0,1,2\}}.$$ 

Hence, the (shifted circular paraboloid) function $h:\tilde I\to \mathbb R$ takes its maximum when $i$ is maximal and $j$ is minimal possible  on the domain $\tilde I$, in other words, the maximiser of the function $h$ restricted on the domain $\tilde I$ is given by $i = \criticaldimension-1 =\rounddown{2\sqrt{n}}-3$ and $j= \roundup{3\sqrt{n}}-1- i = \roundup{3\sqrt{n}}-\rounddown{2\sqrt{n}}+2$. Relaxing the domain  $\tilde I$ of $h$ to the reals, it follows that at the function $h:\tilde I\to \mathbb R$ is upper bounded by 
$h(2\sqrt{n}-3,\sqrt{n}+2) = -7n +48\sqrt{n}-7 \leq -6n$, implying together with \eqref{eq:prf_of_expec_to_zero_largeterms:2}  that 
\begin{align}\label{eq:prf_of_expec_to_zero_largeterms:3}
    \upboundcubeintspan{i} \upboundcubeintspan{j} \leq \Th{1} n^{2\ell} k^{-\frac{3n}{2}}.
\end{align}
Combining \eqref{eq:prf_of_expec_to_zero_largeterms:1} and \eqref{eq:prf_of_expec_to_zero_largeterms:3} and plugging into the definition \eqref{def:ffunc2} of $f_{G,\criticaldimension}$, we obtain that for large enough $n$ and for all $(\ell,i,j,d) \in \indextriplesum{G}{\criticaldimension}^{(2)}$,  
\begin{align}\label{eq:prf_of_expec_to_zero_largeterms:4}
     \ffunc{G}{\criticaldimension}{\ell}{i}{j}{d} \overset{\eqref{def:ffunc2}}&{=} \upboundcubeintspan{i} \upboundcubeintspan{j} \union{G}{\criticaldimension}{\ell}{i}{j}{d} \notag\\ 
     \overset{\eqref{eq:prf_of_expec_to_zero_largeterms:1},\eqref{eq:prf_of_expec_to_zero_largeterms:3}}&{\leq} \Th{1} n^{5\ell} k^{-\frac{n}{2}+\ell}  \leq  n^{20 \sqrt{n}} k ^{-\frac{n}{3}} = o(n^{-3}),
\end{align} where the penultimate inequality uses that $\ell \leq i+j+d \leq 2\criticaldimension \leq 4\sqrt{n}$.
Hence, by the definition of $\indextriplesum{G}{\criticaldimension}^{(2)}$ and \eqref{eq:prf_of_expec_to_zero_largeterms:4} it holds that 
\begin{align*}
\sum_{(\ell,i,j,d) \in \indextriplesum{G}{\criticaldimension}^{(2)}} \ffunc{G}{\criticaldimension}{\ell}{i}{j}{d} &= 
    \sum_{\ell = 3\sqrt{n}+1}^{4\sqrt{n}} \sum_{d = 0}^2 \sum_{i= 0}^{\criticaldimension-1} \sum_{j= 0}^{i} \ffunc{G}{\criticaldimension}{\ell}{i}{j}{d}\\
    &\leq 3 n^3 \max_{(\ell,i,j,d) \in \indextriplesum{G}{\criticaldimension}^{(2)}} \ffunc{G}{\criticaldimension}{\ell}{i}{j}{d} \overset{\eqref{eq:prf_of_expec_to_zero_largeterms:4}}{=} o(1).
\end{align*}
\end{proof}

Moving on, we now aim to show \Cref{lem:triplesumdomination_part2}. As mentioned before, the proof of \Cref{lem:triplesumdomination_part2} goes along similar lines as the proof of \Cref{lem:triplesumdomination}, however in addition to dealing with different values of $i,j$ and $d$, we now also have to consider different values of $\ell$.
\begin{lem}\label{lem:sumdom:l}
It holds that 
\begin{align}\label{eq:sumdom:l}
    \sum_{\ell=\criticaldimension+1}^{3\sqrt{n}} \ffunc{G}{\criticaldimension}{\ell}{\criticaldimension-1}{\ell-\criticaldimension-1}{2} \leq 2 k^{\frac{1}{2}}\ffunc{G}{\criticaldimension}{\criticaldimension}{\criticaldimension-2}{0}{2}.
\end{align}    
\end{lem}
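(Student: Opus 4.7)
The plan is to adapt the same ``ratio-of-consecutive-terms'' strategy that was used in \Cref{lem:sumdom:j,lem:sumdom:i,lem:sumdom:d}, now applied to the index $\ell$. Concretely, to ease notation let $\ffuncredl{\ell} \coloneqq \ffunc{G}{\criticaldimension}{\ell}{\criticaldimension-1}{\ell-\criticaldimension-1}{2}$. I would first show that the map $\ell\mapsto \ffuncredl{\ell}$ decays geometrically in the regime $\criticaldimension+1\leq \ell\leq 3\sqrt n$, so that the whole sum is bounded (up to a factor of $2$) by the first term $\ffuncredl{\criticaldimension+1}$, and then compare $\ffuncredl{\criticaldimension+1}$ to $\ffunc{G}{\criticaldimension}{\criticaldimension}{\criticaldimension-2}{0}{2}$ to obtain the factor $k^{1/2}$.

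For the first step, \Cref{lem:U()computation} gives, after cancellation,
\begin{align*}
\frac{\union{G}{\criticaldimension}{\ell+1}{\criticaldimension-1}{\ell-\criticaldimension}{2}}{\union{G}{\criticaldimension}{\ell}{\criticaldimension-1}{\ell-\criticaldimension-1}{2}}
= \frac{n-\ell}{\ell+1}\cdot\frac{\ell+1}{\ell-\criticaldimension+2}\cdot\frac{\ell-\criticaldimension+2}{\ell-\criticaldimension}
= \frac{n-\ell}{\ell-\criticaldimension},
\end{align*}
and applying \Cref{lem:ratio:P(j):estimate} with $j=\ell-\criticaldimension-1$ yields $\frac{\upboundcubeintspan{\ell-\criticaldimension}}{\upboundcubeintspan{\ell-\criticaldimension-1}}\leq \frac{\ell-\criticaldimension}{n}k^{(\ell-\criticaldimension+4-2\sqrt n)/2}$. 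Multiplying the two estimates gives
\begin{align*}
\frac{\ffuncredl{\ell+1}}{\ffuncredl{\ell}}\leq \frac{n-\ell}{n}\, k^{(\ell-\criticaldimension+4-2\sqrt n)/2}\leq k^{(6-\sqrt n)/2}
\end{align*}
for all $\criticaldimension+1\leq \ell\leq 3\sqrt n-1$, since in that range $\ell-\criticaldimension\leq \sqrt n+2$. For large $n$ this ratio is at most $1/2$, and summing the geometric series gives $\sum_{\ell=\criticaldimension+1}^{3\sqrt n}\ffuncredl{\ell}\leq 2\,\ffuncredl{\criticaldimension+1}$.

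For the second step, a direct application of \Cref{lem:U()computation} gives
\begin{align*}
\frac{\union{G}{\criticaldimension}{\criticaldimension+1}{\criticaldimension-1}{0}{2}}{\union{G}{\criticaldimension}{\criticaldimension}{\criticaldimension-2}{0}{2}}
=\frac{n-\criticaldimension}{\criticaldimension+1}\cdot\frac{\criticaldimension+1}{\criticaldimension-1}
=\frac{n-\criticaldimension}{\criticaldimension-1},
\end{align*}
while \Cref{lem:ratio:P(j):estimate} with $j=\criticaldimension-2$ yields $\frac{\upboundcubeintspan{\criticaldimension-1}}{\upboundcubeintspan{\criticaldimension-2}}\leq \frac{\criticaldimension-1}{n}\,k^{(\criticaldimension+3-2\sqrt n)/2}\leq \frac{\criticaldimension-1}{n}k^{1/2}$, using $\criticaldimension=\rounddown{2\sqrt n}-2$. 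Multiplying the two gives
\begin{align*}
\frac{\ffuncredl{\criticaldimension+1}}{\ffunc{G}{\criticaldimension}{\criticaldimension}{\criticaldimension-2}{0}{2}}\leq \frac{n-\criticaldimension}{n}\,k^{1/2}\leq k^{1/2}.
\end{align*}
Combining the two bounds yields $\sum_{\ell=\criticaldimension+1}^{3\sqrt n}\ffuncredl{\ell}\leq 2k^{1/2}\ffunc{G}{\criticaldimension}{\criticaldimension}{\criticaldimension-2}{0}{2}$, as required.

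No step is conceptually difficult; the main care needed is purely bookkeeping, namely checking that the exponent $\ell-\criticaldimension+4-2\sqrt n$ is comfortably negative throughout the summation range so that the geometric bound is valid and the terms $n-\ell$, $\ell-\criticaldimension$, $\criticaldimension-1$, etc.\ cancel in a clean way. All the underlying structure (the binomial factors in $U$ collapsing, and the factor $k^{1/2}$ surviving from $\Phi$) is already present in the earlier analogues \Cref{lem:sumdom:i,lem:sumdom:d}.
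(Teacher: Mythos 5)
Your proposal is correct and follows essentially the same route as the paper's proof: define $\ffuncredl{\ell}$, show the consecutive-term ratio is at most $1/2$ via \Cref{lem:U()computation} and \Cref{lem:ratio:P(j):estimate} so the sum is at most $2\ffuncredl{\criticaldimension+1}$, and then bound $\ffuncredl{\criticaldimension+1}/\ffunc{G}{\criticaldimension}{\criticaldimension}{\criticaldimension-2}{0}{2}$ by $k^{1/2}$ using the same two ratio estimates. The only differences are cosmetic bookkeeping (you carry the factor $\frac{n-\ell}{n}$ through the first step rather than bounding the $U$-ratio by $n$ separately), and all your intermediate identities check out.
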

\begin{proof}
To ease notation, let us define for $\criticaldimension+1\leq \ell \leq 3\sqrt{n}$
\begin{align*}
    \ffuncredl{\ell} \coloneqq \ffunc{G}{\criticaldimension}{\ell}{\criticaldimension-1}{\ell-\criticaldimension-1}{2}
\end{align*} and consider the ratio
    \begin{align*}
       \frac{ \ffuncredl{\ell+1}}{ \ffuncredl{\ell}} =  \frac{\upboundcubeintspan{\ell-\criticaldimension}}{\upboundcubeintspan{\ell-\criticaldimension-1}} \frac{\union{G}{\criticaldimension}{\ell+1}{\criticaldimension-1}{\ell-\criticaldimension}{2}}{\union{G}{\criticaldimension}{\ell}{\criticaldimension-1}{\ell-\criticaldimension-1}{2}}.
    \end{align*} 

Using \Cref{lem:ratio:P(j):estimate} and $\ell-\criticaldimension \leq  \sqrt{n}+2$ we obtain
\begin{align}\label{eq:sumdom:l:eq1}
    \frac{\upboundcubeintspan{\ell-\criticaldimension}}{\upboundcubeintspan{\ell-\criticaldimension-1}} \leq \frac{(\ell-\criticaldimension)}{n}k^{\frac{1}{2}\left(\ell-\criticaldimension+4-2\sqrt{n}\right)} \leq k^{-\frac{\sqrt{n}}{3}}. 
\end{align}
Furthermore, we get  
\begin{align*}
    \frac{\union{G}{\criticaldimension}{\ell+1}{\criticaldimension-1}{\ell-\criticaldimension}{2}}{\union{G}{\criticaldimension}{\ell}{\criticaldimension-1}{\ell-\criticaldimension-1}{2}} = \frac{\binom{n}{\ell+1}\binom{\ell+1}{\criticaldimension-1}\binom{\ell-\criticaldimension+2}{2}}{\binom{n}{\ell}\binom{\ell}{\criticaldimension-1}\binom{\ell-\criticaldimension+1}{2}} = \frac{n-\ell}{\ell-\criticaldimension} \leq n,
\end{align*} implying together with \eqref{eq:sumdom:l:eq1} 
\begin{align*}
      \frac{ \ffuncredl{\ell+1}}{ \ffuncredl{\ell}} \leq n k^{-\frac{\sqrt{n}}{3}} \leq \frac{1}{2}.
\end{align*}
It thus follows that 
\begin{align}\label{eq:sumdom:l:eq2}
    \sum_{\ell=\criticaldimension+1}^{3\sqrt{n}} \ffuncredl{\ell} \leq 2 \ffuncredl{\criticaldimension+1}.
\end{align} 
On the other hand, using Lemmas \ref{lem:U()computation} and \ref{lem:ratio:P(j):estimate} we obtain
\begin{align*}
    \frac{\ffuncredl{\criticaldimension+1}}{\ffunc{G}{\criticaldimension}{\criticaldimension}{\criticaldimension-2}{0}{2}} &= \frac{\upboundcubeintspan{\criticaldimension-1}}{\upboundcubeintspan{\criticaldimension-2}} \frac{\union{G}{\criticaldimension}{\criticaldimension+1}{\criticaldimension-1}{0}{2}}{\union{G}{\criticaldimension}{\criticaldimension}{\criticaldimension-2}{0}{2}}\\
    &\leq \frac{\criticaldimension-1}{n}k^{\frac{1}{2}\left(\criticaldimension+3-2\sqrt{n}\right)} \frac{\binom{n}{\criticaldimension+1} \binom{\criticaldimension+1}{2}}{\binom{n}{\criticaldimension} \binom{\criticaldimension}{2}} \leq \frac{(n-\criticaldimension)k^{\frac{1}{2}}}{n} \leq k^{\frac{1}{2}},
\end{align*} where the penultimate inequality uses $\criticaldimension := \rounddown{2\sqrt{n}}-2 \leq 2\sqrt{n}-2$.

Combining with  \eqref{eq:sumdom:l:eq2} we thus obtain
\begin{align*}
    \sum_{\ell=\criticaldimension+1}^{3\sqrt{n}} \ffuncredl{\ell} \leq 2 \ffuncredl{\criticaldimension+1} \leq 2 k^{\frac{1}{2}}\ffunc{G}{\criticaldimension}{\criticaldimension}{\criticaldimension-2}{0}{2},
\end{align*} completing the proof.
\end{proof}

\Cref{lem:sumdom:l} provides us with the final ingredient to prove \Cref{lem:triplesumdomination_part2}. 

\begin{proof}[Proof of \Cref{lem:triplesumdomination_part2}]
Let us partition $\indextriplesum{G}{\criticaldimension}^{(1)} $ into  $\indextriplesum{G}{\criticaldimension}^{(1)} =\indextriplesum{G}{\criticaldimension}^{(a)} \cup \indextriplesum{G}{\criticaldimension}^{(b)}$, where 
\begin{align*}
 \indextriplesum{G}{\criticaldimension}^{(a)} \coloneqq \setbuilder{(\ell,i,j,d) \in \indextriplesum{G}{\criticaldimension}^{(1)}}{\ell = \criticaldimension} \ \text{ and }\ \indextriplesum{G}{\criticaldimension}^{(b)} \coloneqq \setbuilder{(\ell,i,j,d) \in \indextriplesum{G}{\criticaldimension}^{(1)}}{\ell \geq \criticaldimension+1}.
\end{align*}

Starting with the sum over indices in $\indextriplesum{G}{\criticaldimension}^{(b)}$, we observe that 
\begin{align}
    \sum_{(\ell,i,j,d) \in\indextriplesum{G}{\criticaldimension}^{(b)}}\ffunc{G}{\criticaldimension}{\ell}{i}{j}{d} = \sum_{\ell = \criticaldimension+1}^{3\sqrt{n}} \sum_{d=0}^2 \sum_{i=0}^{\criticaldimension-1} \sum_{j=0}^{i} \ffunc{G}{\criticaldimension}{\ell}{i}{j}{d}\label{eq:proofof:lem:triplesumdom}.
\end{align} 
Using \Cref{lem:sumdom:j} and \Cref{lem:sumdom:i} (a) we obtain  
\begin{align*}
    \sum_{\ell = \criticaldimension+1}^{3\sqrt{n}} \sum_{d=0}^2 \sum_{i=0}^{\criticaldimension-1} \sum_{j=0}^{i} \ffunc{G}{\criticaldimension}{\ell}{i}{j}{d} \overset{\eqref{eq:varyingj}}&{\leq} \left(1+k^{-\frac{\sqrt{n}}{6}}\right) \sum_{\ell = \criticaldimension+1}^{3\sqrt{n}} \sum_{d=0}^2 \sum_{i=0}^{\ell-1} \ffunc{G}{\criticaldimension}{\ell}{i}{\ell-i-d}{d} \\
     \overset{\eqref{eq:lem:sumdom:i:secondpart}}&{\leq} \left(1+k^{-\frac{\sqrt{n}}{6}}\right)^2 \sum_{\ell = \criticaldimension+1}^{3\sqrt{n}} \sum_{d=0}^2 \ffunc{G}{\criticaldimension}{\ell}{\criticaldimension-1}{\ell-(\criticaldimension-1)-d}{d},
\end{align*} 
and furthermore, by \Cref{lem:sumdom:d} (a) and \Cref{lem:sumdom:l}
\begin{align*}
    &\left(1+k^{-\frac{\sqrt{n}}{6}}\right)^2 \sum_{\ell = \criticaldimension+1}^{3\sqrt{n}} \sum_{d=0}^2 \ffunc{G}{\criticaldimension}{\ell}{\criticaldimension-1}{\ell-(\criticaldimension-1)-d}{d} \notag \\
        \overset{\eqref{eq:lem:sumdom:d:secondpart}}&{\leq} \left(1+k^{-\frac{\sqrt{n}}{6}}\right)^3 \sum_{\ell = \criticaldimension+1}^{3\sqrt{n}} \ffunc{G}{\criticaldimension}{\ell}{\criticaldimension-1}{\ell-\criticaldimension-1}{2} \\
     \overset{\eqref{eq:sumdom:l}}&{\leq}  \left(1+k^{-\frac{\sqrt{n}}{6}}\right)^3 2 k^{\frac{1}{2}}\ffunc{G}{\criticaldimension}{\criticaldimension}{\criticaldimension-2}{0}{2} \notag \\
        &\leq 3k^{\frac{1}{2}}\ffunc{G}{\criticaldimension}{\criticaldimension}{\criticaldimension-2}{0}{2},
\end{align*} which yields together with  \eqref{eq:proofof:lem:triplesumdom}
\begin{align}
     \sum_{(\ell,i,j,d) \in\indextriplesum{G}{\criticaldimension}^{(b)}}\ffunc{G}{\criticaldimension}{\ell}{i}{j}{d} \leq 3 k^{\frac{1}{2}}\ffunc{G}{\criticaldimension}{\criticaldimension}{\criticaldimension-2}{0}{2}. \label{eq:eq:proofof:lem:triplesumdom:eq2}
\end{align} 

To bound the sum over indices in $\indextriplesum{G}{\criticaldimension}^{(a)}$, observe first that by \Cref{lem:U()computation} it holds that for any indices $(\ell,i,j,d) \in \indextriplesum{G}{\criticaldimension}^{(a)}$ and any projection $\subcube{}$ of dimension $\criticaldimension$,
\begin{align*}
    \union{G}{\criticaldimension}{\ell}{i}{j}{d} = \binom{n}{\criticaldimension} k^{n-\criticaldimension} \union{\subcube{}}{\criticaldimension}{\ell}{i}{j}{d},
\end{align*} implying
\begin{align}\label{eq:relation:ffuncG_and_ffuncH}
    \ffunc{G}{\criticaldimension}{\ell}{i}{j}{d} = \binom{n}{\criticaldimension} k^{n-\criticaldimension} \ffunc{\subcube{}}{\criticaldimension}{\ell}{i}{j}{d}.
\end{align}
Furthermore, it follows by \Cref{lem:triplesumdomination} that 
\begin{align}
     \sum_{(\ell,i,j,d) \in\indextriplesum{G}{\criticaldimension}^{(a)}}\ffunc{\subcube{\criticaldimension}}{\criticaldimension}{\ell}{i}{j}{d} 
     &\leq \left(1+k^{-\frac{\criticaldimension-11}{2}} \right) \left(1+ \frac{4}{n}\right)\ffunc{\subcube{\criticaldimension}}{\criticaldimension}{\criticaldimension}{\criticaldimension-2}{0}{2} \notag \\ 
     &\leq 2 \ffunc{\subcube{\criticaldimension}}{\criticaldimension}{\criticaldimension}{\criticaldimension-2}{0}{2},\label{eq:relation:ffuncG1}
\end{align} yielding together with \eqref{eq:relation:ffuncG_and_ffuncH} that for large enough $n$
\begin{align}
     \sum_{(\ell,i,j,d) \in\indextriplesum{G}{\criticaldimension}^{(a)}}\ffunc{G}{\criticaldimension}{\ell}{i}{j}{d} 
      \overset{\eqref{eq:relation:ffuncG_and_ffuncH}}&{=} \binom{n}{\criticaldimension} k^{n-\criticaldimension}  \sum_{(\ell,i,j,d) \in\indextriplesum{G}{\criticaldimension}^{(a)}}\ffunc{\subcube{\criticaldimension}}{\criticaldimension}{\ell}{i}{j}{d} \notag \\
        \overset{\eqref{eq:relation:ffuncG1}}&{\leq} 2 \binom{n}{\criticaldimension} k^{n-\criticaldimension}  \ffunc{\subcube{\criticaldimension}}{\criticaldimension}{\criticaldimension}{\criticaldimension-2}{0}{2} \notag \\
     \overset{\eqref{eq:relation:ffuncG_and_ffuncH}}&{=} 2  \ffunc{G}{\criticaldimension}{\criticaldimension}{\criticaldimension-2}{0}{2}.\label{eq:relation:ffuncG2}
\end{align}

Finally, using $\indextriplesum{G}{\criticaldimension}^{(1)} =\indextriplesum{G}{\criticaldimension}^{(a)} \cup \indextriplesum{G}{\criticaldimension}^{(b)}$ and combining \eqref{eq:relation:ffuncG2} and \eqref{eq:eq:proofof:lem:triplesumdom:eq2} we have
\begin{align*}
    \sum_{(\ell,i,j,d) \in\indextriplesum{G}{\criticaldimension}^{(1)}}\ffunc{G}{\criticaldimension}{\ell}{i}{j}{d}  &= \sum_{(\ell,i,j,d) \in\indextriplesum{G}{\criticaldimension}^{(a)}}\ffunc{G}{\criticaldimension}{\ell}{i}{j}{d} +\sum_{(\ell,i,j,d) \in\indextriplesum{G}{\criticaldimension}^{(b)}}\ffunc{G}{\criticaldimension}{\ell}{i}{j}{d} \\
    &\leq 5 k^{\frac{1}{2}}\ffunc{G}{\criticaldimension}{\criticaldimension}{\criticaldimension-2}{0}{2},
\end{align*}as desired.
\end{proof}

We continue by showing \Cref{claim:ffuncSmaller_Upboundcubeintspan}.
\begin{proof}[Proof of \Cref{claim:ffuncSmaller_Upboundcubeintspan}]
 As a special case of  \eqref{eq:relation:ffuncG_and_ffuncH} it holds that  
    \begin{align*}
        \ffunc{G}{\criticaldimension}{\criticaldimension}{\criticaldimension-2}{0}{2} =   \binom{n}{\criticaldimension} k^{n-\criticaldimension}\ffunc{\subcube{\criticaldimension}}{\criticaldimension}{\criticaldimension}{\criticaldimension-2}{0}{2}.
    \end{align*} 
In addition, as a special case of \eqref{eq:proof:upboundcubeintspan:eq1} we have $\ffunc{\subcube{}}{\criticaldimension}{\criticaldimension}{\criticaldimension-2}{0}{2} \leq \frac{\upboundcubeintspanconstant{\criticaldimension-2}}{\upboundcubeintspanconstant{\criticaldimension}} \upboundcubeintspan{\criticaldimension}$ and thus
    \begin{align*}
        \ffunc{\subcube{\criticaldimension}}{\criticaldimension}{\criticaldimension}{\criticaldimension-2}{0}{2} \leq \frac{\upboundcubeintspanconstant{\criticaldimension-2}}{\upboundcubeintspanconstant{\criticaldimension}}\upboundcubeintspan{\criticaldimension} \leq \upboundcubeintspan{\criticaldimension},
    \end{align*} 
    where the last inequality follows as the functions $\upboundcubeintspanconstant{m}$  are clearly increasing in $m$. 
    This implies
    \begin{align*}
         \ffunc{G}{\criticaldimension}{\criticaldimension}{\criticaldimension-2}{0}{2} \leq \binom{n}{\criticaldimension} k^{n-\criticaldimension}\upboundcubeintspan{\criticaldimension},
    \end{align*}
    as claimed. 
\end{proof}

We complete the proof of  \crefitemintheorem{thm:main:threshold}{thm:main:threshold:lower} by showing that indeed the expected number of internally spanned \projections{} of dimension $\criticaldimension$ turns to $0$ as $n$ tends to infinity.  
\begin{proof}[Proof of \Cref{lem:expec_to_zero_smallterm}]
Recalling the definition \eqref{def:upboundcubeintspan} of the function $\upboundcubeintspan{\cdot}$ and using \Cref{lem:upperboundcubeintspanconst}  we obtain
    \begin{align} 
       \binom{n}{\criticaldimension} n^{k-\criticaldimension+\frac{1}{2}} \upboundcubeintspan{\criticaldimension} \overset{\eqref{def:upboundcubeintspan}}&{=}\upboundcubeintspanconstant{\criticaldimension} \binom{n}{\criticaldimension} (\criticaldimension!) k^{n-\criticaldimension+\frac{1}{2}} p^{\frac{\criticaldimension}{2}+1}  2^{-\frac{\criticaldimension}{2}-1}(k-1)^{\criticaldimension} k^{\frac{\criticaldimension^2+2\criticaldimension}{4}} \notag \\
        &\leq   \upboundcubeintspanconstant{\criticaldimension} n^{\criticaldimension} \left(\frac{1}{2n^2}\right)^{\frac{\criticaldimension}{2}+1}k^{n+\frac{1}{4}\left(\criticaldimension^2+2\criticaldimension+2\right)-(2\sqrt{n}-1)(\frac{\criticaldimension}{2}+1)} \quad (\text{because } p \leq p_* =n^{-2}k^{-2\sqrt{n}+1} ) \notag\\
        \overset{Lem.~\ref{lem:upperboundcubeintspanconst}}&{\leq} \Theta(1)\left(\frac{1}{2}\right)^{\frac{\criticaldimension}{2}+1}k^{n+\frac{1}{4}\left(\criticaldimension^2+2\criticaldimension+2\right)-(2\sqrt{n}-1)(\frac{\criticaldimension}{2}+1)}. \label{eq:expec_to_zero_smallterm:1}
    \end{align}
    Now observe that the function $g:\mathbb R \to \mathbb R, x\mapsto  n+\frac{1}{4}\left(x^2+2x+2\right)-(2\sqrt{n}-1)(\frac{x}{2}+1)$ defines a convex parabola whose center is shifted to $2\sqrt{n}-2$ and thus it takes its minimum at $x = 2\sqrt{n}-2$ and is decreasing up to this value. In particular, as $\criticaldimension \coloneqq \rounddown{2\sqrt{n}}-2 \geq 2\sqrt{n}-3$ it holds that $g(\criticaldimension) \leq g(2\sqrt{n}-3) = \frac{3}{4}$.
    Thus, we obtain together with \eqref{eq:expec_to_zero_smallterm:1} that 
    \begin{align*}
        \binom{n}{\criticaldimension} n^{k-\criticaldimension+\frac{1}{2}} \upboundcubeintspan{\criticaldimension} \leq \Th{1} \left(\frac{1}{2}\right)^{\sqrt{n}-\frac{1}{2}} k^{\frac{3}{4}} = o(1),
    \end{align*}
    where the last equality uses that $k \leq 2^{\sqrt{n}}$. 
\end{proof}

\section{Proofs of auxiliary results for the upper threshold}\label{sec:proof:upper}
In this section we prove auxiliary results from \Cref{sec:UpperBound}. Recall $\criticalsizesequential \coloneqq \rounddown{\frac{n}{2}}$. For each $1 \leq \ell  \leq \criticalsizesequential$ we set
\begin{align*}
  \tilde{\seqspanconst{\ell}} \coloneqq \binom{n-2\ell+2}{2}(k-1)^2 k^{2\ell -2}. 
\end{align*}

Recall that $\seqset{\ell}$ denotes the set of all \feasible{} sequences of \seqsize{} $\ell$ in $G=(V,E)$ and we start by computing $|\seqset{\ell}|$.

\begin{proof}[Proof of \Cref{lem:seqspancount}]
Let $1 \leq \ell \leq \criticalsizesequential$ and  let $\seq{v}{\ell-1} =(\vertexvec{v}_0,\ldots,\vertexvec{v}_{\ell-1}) \in \seqset{\ell-1}$ be a \feasible{} sequence of \seqsize{} $\ell-1$. Looking at Property \eqref{Prop:sequentiallyspanning} it follows immediately that
\begin{align*}
    \seqspanconst{\ell} :&=    \left| \setbuilder{\vertexvec{w} \in V}{\seq{v}{\ell} =(\vertexvec{v}_0,\ldots,\vertexvec{v}_{\ell-1},\vertexvec{w}) \in \seqset{\ell} } \right| \\
    &= \left| \setbuilder{\vertexvec{w} \in V}{\dist{\closure{\set{\seq{v}{\ell-1}}}}{\vertexvec{w}} = 2} \right|.
\end{align*}
By \Cref{lem:seqspanningspansprojection}, $ \closure{\set{\seq{v}{\ell-1}}}$ spans a \projection{} $\subcube{}$ of dimension $2 \ell -2$. Thus, it suffices to count the number of vertices that lie at distance two from $\subcube{}$. To that end, note that any vertex $\vertexvec{v}$ satisfying $\dist{\vertexvec{v}}{\subcube{2 \ell -2}} =2$ has to differ from $\subcube{}$ in exactly two coordinates. As there are  $2 \ell -2$ coordinates, in which $\subcube{}$ consists of a complete graph $K_k$, there are $\binom{n-2\ell+2}{2}$ possibilities for choosing the coordinates in which   $\vertexvec{v}$ differs from $\subcube{}$ and $(k-1)^2$ possibilities for specifying a value of $\vertexvec{v}$ in those coordinates. In all coordinates, in which $\subcube{}$ consists of a complete graph, $\vertexvec{v}$ can take an arbitrary value $\in [k]$, leading to  $k^{2\ell-2}$ additional choices. All remaining coordinates of $\vertexvec{v}$ have to coincide with $\subcube{}$, implying
\begin{align}\label{eq:seqspanextension}
    \seqspanconst{\ell} = \binom{n-2\ell+2}{2} (k-1)^2 k^{2\ell-2} =: \tilde{\seqspanconst{\ell}} ,
\end{align} as claimed. 

To show \eqref{eq:Sell} and thus to explicitly compute $ |\seqset{\ell}| $, recall that  $|\seqset{0}| = k^n $ and note that by the definitions of $\seqset{\ell}$ and $\seqspanconst{\ell}$ we have
\begin{align}\label{eq:SellCell}
|\seqset{\ell}| = |\seqset{\ell-1}| \cdot \seqspanconst{\ell}   \quad\quad \text{for all } \quad 1 \leq \ell \leq \criticalsizesequential.
\end{align}
Thus, \eqref{eq:seqspanextension} and \eqref{eq:SellCell} imply
\begin{align*}
    |\seqset{\ell}| &= k^n \prod_{j=1}^\ell \seqspanconst{j} = k^n \prod_{j=1}^\ell \binom{n-2j+2}{2}(k-1)^2k^{2j-2} =   k^n\prod_{j=1}^{\ell} \seqspanconst{j} = \frac{n!}{(n-2\ell)!}  (k-1)^{2\ell} 2^{-\ell} k^{n+\ell^2-\ell},
\end{align*} completing the proof. 

\end{proof}
We continue with the proof of \Cref{lem:bound:seqoverlap} on the upper bound on the number of pairs of \feasible{} sequences of \seqsize{} $\ell$ that intersect exactly in $i$ vertices, which was denoted by
$$\seqoverlap{\ell}{i} \coloneqq \setbuilder{\{\seq{v}{\ell},\seq{w}{\ell} \} \subseteq \seqset{\ell}}{\left|\seq{v}{\ell}\cap \seq{w}{\ell}\right| = i}.$$

In general, our proof strategy is to start with pairs of very short sequences and then sequentially extend them to length $\ell$. To that end, we first take a look at pairs of sequences that do not intersect until their last index. Note that the resulting sequences can then either intersect in one or two vertices, depending on whether only one of the respective final vertices is contained in the other sequence, or both of them are. 
In particular, for $m \in [n]\cup\{0\}$ and $j \in [2]$ let us denote by
   \begin{align}\label{eq:def:seqoverlaplastindex}
      Y_m^{(j)} \coloneqq \setbuilder{ \set{\seq{v}{m} = (\vertexvec{v}_0,\ldots,\vertexvec{v}_m), \seq{w}{m} = (\vertexvec{w}_0,\ldots,\vertexvec{w}_m)} \in \seqoverlap{m}{j}}{\{\vertexvec{v}_0,\ldots,\vertexvec{v}_{m-1}\} \cap \{\vertexvec{w}_0,\ldots,\vertexvec{w}_{m-1}\}  = \emptyset}, 
   \end{align} the set of pairs of \feasible{} sequences of length $m$ with intersection of size $j$ such that the cut-off sequences (i.e., the sequences without the last vertex) of length $m-1$  have empty intersection. 
In the next lemma we bound the sizes of these sets from above.
\begin{lem}\label{lem:bound:seqoverlaplastindex}
Let $m \in [n]\cup\set{0}$ and let $ \seqoverlaplastindexone{m}$ and $\seqoverlaplastindextwo{m}$ be defined as in \eqref{eq:def:seqoverlaplastindex}. Then, 
\begin{enumerate}[label = (\alph*)]

\item\label{eq:lem:bound:seqoverlapindex:firstpoint} $|\seqoverlaplastindexone{m}| \leq \frac{2(m+1)}{k^n}|\seqset{m}|^2$;
\item\label{eq:lem:bound:seqoverlapindex:secondpoint} 
$|\seqoverlaplastindextwo{m}| \leq \indicator{m \geq 1}\frac{m(m+1) }{\seqspanconst{m} k^n} |\seqset{m}|^2 $.

\end{enumerate}
\end{lem}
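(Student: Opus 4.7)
My plan is to bound both $|\seqoverlaplastindexone{m}|$ and $|\seqoverlaplastindextwo{m}|$ by enumerating \emph{ordered} pairs $(\seq{v}{m},\seq{w}{m})$ satisfying the defining properties and then dividing by two. The key input is the vertex-transitivity of $G=\square_{i=1}^n K_k$: for every fixed position $j\in\{0,\ldots,\ell\}$ and every vertex $\vertexvec{u}\in V$, exactly $|\seqset{\ell}|/k^n$ sequences in $\seqset{\ell}$ have their $j$-th entry equal to $\vertexvec{u}$. Summing over positions and using that a sequentially spanning sequence has pairwise distinct vertices (a direct consequence of \Cref{def:seqspanningsequences}), a prescribed vertex is contained in exactly $(\ell+1)|\seqset{\ell}|/k^n$ sequences of $\seqset{\ell}$.

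For \ref{eq:lem:bound:seqoverlapindex:firstpoint}, the disjointness of the initial portions forces the unique common vertex to involve at least one last entry, yielding three (possibly overlapping) cases that suffice for an overbound: (i) $\vertexvec{v}_m=\vertexvec{w}_m$, (ii) $\vertexvec{v}_m\in\{\vertexvec{w}_0,\ldots,\vertexvec{w}_{m-1}\}$, or (iii) $\vertexvec{w}_m\in\{\vertexvec{v}_0,\ldots,\vertexvec{v}_{m-1}\}$. In (i), fix $\seq{v}{m}$ ($|\seqset{m}|$ choices); then $\seq{w}{m}$ must satisfy $\vertexvec{w}_m=\vertexvec{v}_m$, leaving at most $|\seqset{m}|/k^n$ choices by vertex-transitivity, so at most $|\seqset{m}|^2/k^n$ ordered pairs. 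In (ii), fix $\seq{v}{m}$; choose $\seq{w}{m-1}\in\seqset{m-1}$ containing the vertex $\vertexvec{v}_m$ (at most $m|\seqset{m-1}|/k^n$ choices by vertex-transitivity); then extend to $\seq{w}{m}$ in at most $\seqspanconst{m}$ ways, giving at most $m|\seqset{m}|^2/k^n$ ordered pairs by \eqref{eq:SellCell0}. Case (iii) is symmetric and contributes the same. Summing yields at most $(2m+1)|\seqset{m}|^2/k^n$ ordered pairs, and dividing by two gives $|\seqoverlaplastindexone{m}|\leq (m+\tfrac{1}{2})|\seqset{m}|^2/k^n\leq 2(m+1)|\seqset{m}|^2/k^n$.

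For \ref{eq:lem:bound:seqoverlapindex:secondpoint}, the set $\seqoverlaplastindextwo{0}$ is empty because sequences of length one cannot share two vertices. For $m\ge 1$, the disjointness of the initial portions together with $|\seq{v}{m}\cap\seq{w}{m}|=2$ forces $\vertexvec{v}_m=\vertexvec{w}_j$ and $\vertexvec{w}_m=\vertexvec{v}_i$ for uniquely determined indices $i,j\in\{0,\ldots,m-1\}$ (with $\vertexvec{v}_m\neq\vertexvec{w}_m$). Count ordered pairs as follows: pick $\seq{v}{m}\in\seqset{m}$ freely ($|\seqset{m}|$ choices); pick $i\in\{0,\ldots,m-1\}$ ($m$ choices, forcing $\vertexvec{w}_m=\vertexvec{v}_i$); pick $\seq{w}{m-1}\in\seqset{m-1}$ containing $\vertexvec{v}_m$ (at most $m|\seqset{m-1}|/k^n$ choices by vertex-transitivity); finally, the last entry of $\seq{w}{m}$ is forced to equal the prescribed $\vertexvec{v}_i$, admitting at most one valid extension. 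Multiplying yields at most $m^2|\seqset{m}||\seqset{m-1}|/k^n=m^2|\seqset{m}|^2/(\seqspanconst{m}k^n)$ ordered pairs via \eqref{eq:SellCell0}, and dividing by two together with $m^2/2\leq m(m+1)$ yields the claim.

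The main technical point is the vertex-transitivity estimate for the number of sequences in $\seqset{\ell}$ containing or ending at a prescribed vertex; once this is in hand, both bounds follow from short case analyses exploiting the distinctness of vertices in a sequentially spanning sequence and the recursion $|\seqset{m}|=\seqspanconst{m}|\seqset{m-1}|$.
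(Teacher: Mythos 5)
Your proof is correct and follows essentially the same route as the paper: both rest on the symmetry/transitivity fact that the number of feasible sequences ending at (or containing) a prescribed vertex is $|\seqset{m}|/k^n$ (resp.\ $(m+1)|\seqset{m}|/k^n$), combined with the extension count $\seqspanconst{m}$ and the recursion $|\seqset{m}|=\seqspanconst{m}|\seqset{m-1}|$. Your slightly finer case analysis (counting ordered pairs and dividing by two) even yields marginally better constants, namely $(m+\tfrac12)$ and $m^2/2$ in place of $2(m+1)$ and $m(m+1)$, but the argument is the same in substance.
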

\begin{proof}
We start by showing \ref{eq:lem:bound:seqoverlapindex:firstpoint}. Due to symmetry we can assume w.lo.g. that all pairs of sequences $\{\seq{v}{m} = (\vertexvec{v}_0,\ldots,\vertexvec{v}_m), \seq{w}{m} = (\vertexvec{w}_0,\ldots,\vertexvec{w}_m)\} \in \seqoverlaplastindexone{m}$ satisfy  $\vertexvec{w}_m \in \set{\seq{v}{m}}$ and $\vertexvec{v}_m \notin \set{\seq{w}{m}}$. 
For an arbitrary vertex $\vertexvec{x} \in V$ let us denote by 
\begin{align*}
    \seqsetwithvertex{m}{x}{i} \coloneqq \setbuilder{\seq{w}{m} =(\vertexvec{w}_0,\ldots\vertexvec{w}_m) \in \seqset{m}}{\vertexvec{w}_m = \vertexvec{x}},
\end{align*} the set of \feasible{} sequences of length $m$  that end with vertex $\vertexvec{x}$.  
Now observe that we can obtain every pair of sequences in $\seqoverlaplastindexone{m}$ by first choosing a sequence $\seq{v}{m} \in \seqset{m}$, then fixing a vertex $\vertexvec{x} \in \set{\seq{v}{m}}$ and finally choosing a sequence $\seq{w}{m} \in \seqsetwithvertex{m}{x}{i}$ that ends with vertex $\vertexvec{x}$. In particular, we obtain
\begin{align}\label{eq:lem:bound:seqoverlaplastindex}
    |\seqoverlaplastindexone{m}| \leq 2 \sum_{\seq{v}{m} \in \seqset{m}} \sum_{\vertexvec{x} \in \set{\seq{v}{m}}} |\seqsetwithvertex{m}{x}{i}|.   
\end{align}
To bound $|\seqsetwithvertex{m}{x}{i}|$ from above, note that
\begin{align*}
     \bigcup_{\vertexvec{x} \in G} \seqsetwithvertex{m}{x}{i} = \seqset{m}.
\end{align*}
Indeed, because any sequence $\seq{v}{m} \in \seqsetwithvertex{m}{x}{i} $ is  a \feasible{} sequence of \seqsize{} $m$, it clearly holds that $  \bigcup_{\vertexvec{x} \in G} \seqsetwithvertex{m}{x}{i} \subseteq \seqset{m}$.
For the other inclusion, observe that  for any sequence $\seq{v}{m}=(\vertexvec{v}_0,\ldots,\vertexvec{v}_m) \in \seqset{m}$ it holds that $\seq{v}{m} \in \seqsetwithvertex{m}{v_m}{i}$.
Now note that for different values of $\vertexvec{x}$, the sets $\seqsetwithvertex{m}{x}{i}$ are disjoint and furthermore, due to symmetry, the value $|\seqsetwithvertex{m}{x}{i}|$ does not depend on $\vertexvec{x}$ , implying together that  for all $\vertexvec{x} \in V$, 
\begin{align*}
   k^n \abs{\seqsetwithvertex{m}{x}{i}}=  \sum_{\vertexvec{x} \in V} |\seqsetwithvertex{m}{x}{i}| = |\seqset{m}|,
\end{align*}
and in particular,
\begin{align}
    \abs{\seqsetwithvertex{m}{x}{i}} = \frac{\abs{\seqset{m}}}{k^n}.\label{eq:univeraleq}
\end{align}
Plugging it into \eqref{eq:lem:bound:seqoverlaplastindex} we obtain
\begin{align*}
    |\seqoverlaplastindexone{m}| \overset{\eqref{eq:lem:bound:seqoverlaplastindex},\eqref{eq:univeraleq}}{\leq} 2 \sum_{\seq{v}{m} \in \seqset{m}} \sum_{\vertexvec{x} \in \set{\seq{v}{m}}}  \frac{|\seqset{m}|}{k^n} 
    =   2  | \seqset{m} |  \cdot  |\set{\seq{v}{m}}| \cdot  \frac{|\seqset{m}|}{k^n} 
    =   \frac{2(m+1)}{k^n}|\seqset{m}|^2,
\end{align*} completing the proof of \ref{eq:lem:bound:seqoverlapindex:firstpoint}. 

To show \ref{eq:lem:bound:seqoverlapindex:secondpoint}, note that we can count $\seqoverlaplastindextwo{m}$ similarly as $\seqoverlaplastindexone{m}$: We fix first a sequence $\seq{v}{m-1}$ and then a vertex $\vertexvec{x}$ of $\seq{v}{m-1}$ with which $\seq{w}{m}$ should end. Next we fix a sequence $\seq{w}{m} \in \seqsetwithvertex{m}{x}{i}$ and finally choose a vertex $\vertexvec{y}$ from all the vertices in $\seq{w}{m}$ to extend $\seq{v}{m-1}$. Independently of all these choice we have $\left|\set{\seq{w}{m}} \right|=m+1$. Thus, from \eqref{eq:univeraleq} and  \eqref{eq:SellCell} with $\ell=m$, i.e., $|\seqset{m-1}| = |\seqset{m}|/C_m$ we obtain
\begin{align*}
    |\seqoverlaplastindextwo{m}| &\leq  \sum_{\seq{v}{m-1} \in \seqset{m-1}} \sum_{\vertexvec{x} \in \set{\seq{v}{m-1}}} \sum_{\seq{w}{m} \in \seqsetwithvertex{m}{x}{i}} \left|\set{\seq{w}{m}} \right| \\
    &=  |\seqset{m-1}| \cdot   |\set{\seq{v}{m-1}}|   \cdot |\seqsetwithvertex{m}{x}{i}| \cdot (m+1) \\
   \overset{\eqref{eq:univeraleq}}&{=} |\seqset{m-1}| \cdot m (m+1) \cdot    \frac{|\seqset{m}|}{k^n}\\
   \overset{\eqref{eq:SellCell}}&{=} \frac{m(m+1) }{\seqspanconst{m} k^n}|\seqset{m}|^2,
\end{align*} as required.
\end{proof}

\Cref{lem:bound:seqoverlaplastindex} essentially tells us how many ways there are to start intersecting pairs of sequences.  
We continue by counting the number of ways we can extend such pairs of sequences to sequences of length $\ell$.  
For $m\in [n]$ and $i \in[m+1]\cup\set{0}$, let $\{\seq{v}{m} = (\vertexvec{v}_0,\ldots,\vertexvec{v}_m), \seq{w}{m} = (\vertexvec{w}_0,\ldots,\vertexvec{w}_m)\} \subseteq \seqset{m}$ be two \feasible{} sequences satisfying $|\seq{v}{m} \cap \seq{w}{m} | = i$. For $\ell \geq m+1$ and $s \in [\ell]\cup \set{0} $ let us denote by 
\begin{align}\label{eq:def:extension}
    &{\extset{\seq{v}{m}}{\seq{w}{m}}{\ell}{i}{s}} \coloneqq \notag\\
    &\setbuilder{\set{\seq{v}{\ell}' = \left(\vertexvec{v}_0',\ldots,\vertexvec{v}_\ell'\right), \seq{w}{\ell}' = \left(\vertexvec{w}_0',\ldots,\vertexvec{w}_\ell'\right)} \in \seqoverlap{\ell}{s+i}}{\forall 0 \leq j \leq m \colon \left( \vertexvec{v}_j = \vertexvec{v}_j' \land \vertexvec{w}_j = \vertexvec{w}_j'\right)} 
\end{align} the set of all extensions of the pair $\{\seq{v}{m},\seq{w}{m}\} \subseteq \seqset{m}$ to a pair $\{\seq{v}{\ell}', \seq{w}{\ell}'\} \subseteq \seqset{\ell}$ of \feasible{} sequences of  \seqsize{} $\ell$ satisfying $\left|\seq{v}{\ell}' \cap \seq{w}{\ell}' \right| = i+s$. 
As $\ell$  remains fixed throughout this section, we will drop the subscript $\ell$ to ease notation and write
\begin{align*}
    \extnum{m}{i}{s} \coloneqq \max_{\set{\seq{v}{m},\seq{w}{m}} \in \seqoverlap{m}{i}} \abs{\extset{\seq{v}{m}}{\seq{w}{m}}{\ell}{i}{s}}.
\end{align*}

In the next lemma we provide bounds on $\extnum{m}{i}{s}$.

\begin{lem}\label{lem:bound:seqoverlap:extensions}
Let $m\in [n]$, $i \in[m+1]\cup\set{0}$ and $s \in [\ell]\cup \set{0} $. For $1 \leq j \leq  \criticalsizesequential$ set  

\begin{align*}
    \constinlemma{j} \coloneqq \indicator{j=1}\seqspanconst{1} + \indicator{j>1}\maxp{}{\seqspanconst{j},3\constinlemma{j-1}}.
\end{align*}
Then, 
\begin{enumerate}[label = (\alph*)]
    \item $\seqspanconst{j} \leq \constinlemma{j}$;\label{eq:seqspanconst_smaller_constinlemma}
    \item$ \extnum{m}{i}{s} \leq 8^s \left(\prod_{j=m+1}^{\ell}\constinlemma{j}^2\right) \auxfunc{m}{i}{s}$, \label{eq:bound:seqoverlap:auxiliary}
\end{enumerate}
where 
\begin{align}\label{eq:def:auxfunc}
     \auxfunc{m}{i}{s} &\coloneqq 
     \begin{cases}
         \indicator{s=0}+\indicator{s>0}\prod_{j = m+1}^{2m-i+1} \left(\frac{2m-i+2-j}{\constinlemma{j}} \right)^2\prod_{j = 2m-i+2}^{s+i-1} \constinlemma{j}^{-1} & \text{ if }\ m-i+1 < s/2 \\
        \indicator{s=0}+\indicator{s>0}\prod_{j = m+1}^{m+\rounddown{\frac{s}{2}}} \left(\frac{2m-i+2-j}{\constinlemma{j}} \right)^2 \left(\indicator{s \text{ even }}+ \indicator{s \text{ odd }} \frac{2(m-\rounddown{\frac{s}{2}}-i+1)+1}{\constinlemma{m +\rounddown{\frac{s}{2}}+1}}\right) & \text{ else.}
     \end{cases}
\end{align}
\end{lem}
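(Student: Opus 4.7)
Part (a) follows by induction on $j$ directly from the recursive definition of $\constinlemma{\cdot}$: $\constinlemma{1} = \seqspanconst{1}$, and for $j>1$ one has $\constinlemma{j} = \max\{\seqspanconst{j}, 3\constinlemma{j-1}\} \geq \seqspanconst{j}$.

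For part (b), my strategy is to extend the pair $\{\seq{v}{m}, \seq{w}{m}\}$ to length $\ell$ one step at a time: at each $j \in \{m+1,\ldots,\ell\}$ a vertex is appended to each sequence. By \Cref{lem:seqspancount}, a feasible sequence of length $j-1$ admits exactly $\seqspanconst{j} \leq \constinlemma{j}$ valid extensions, so the unconstrained number of pairwise extensions is at most $\prod_{j=m+1}^{\ell} \constinlemma{j}^2$, which matches the leading product in the claim. It then remains to produce the factor $\auxfunc{m}{i}{s}$ from the constraint that exactly $s$ new common vertices are added.

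At each step $j$, writing $i_{j-1}$ for the current intersection size (with $i_m=i$), a new common vertex is created through one of three mechanisms: (i) the vertex appended to $\seq{v}{j}$ lies in $\{\seq{w}{j-1}\}\setminus\{\seq{v}{j-1}\}$, which offers at most $j-i_{j-1}$ candidates; (ii) the symmetric event for $\seq{w}{j}$; or (iii) the same new vertex is appended to both sequences, which offers at most $\seqspanconst{j} \leq \constinlemma{j}$ candidates. Hence a step producing one new common vertex contributes at most a factor of order $(j-i_{j-1})\constinlemma{j} + \constinlemma{j}$ in place of $\constinlemma{j}^2$, and a double-common step (one creation via (i) and one via (ii)) contributes at most $(j-i_{j-1})^2$. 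The global factor $8^s$ absorbs the multiplicities of this case analysis, the constant-order slack in the above bounds, and the sum over patterns of creation events across the $\ell-m$ steps.

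A monotone-packing (swap) argument then shows that the bound is maximised by concentrating the creation events in the earliest consecutive steps $j=m+1,m+2,\ldots$, since $i_{j-1}$ is non-decreasing and hence the gap $j-i_{j-1}$ is largest when events happen early. After $k$ preceding double steps one has $i_{m+k}=i+2k$, so $j-i_{j-1}=2m-i+2-j$ at step $j=m+k+1$, producing the product $\prod_{j=m+1}^{m+\rounddown{s/2}}\bigl(\tfrac{2m-i+2-j}{\constinlemma{j}}\bigr)^2$; for odd $s$ a single extra step at $j=m+\rounddown{s/2}+1$ yields the additional factor $\tfrac{2(m-\rounddown{s/2}-i+1)+1}{\constinlemma{m+\rounddown{s/2}+1}}$ (case 2). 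In case 1, $s>2(m-i+1)$, the supply of non-shared distinct vertices is exhausted after $m-i+1$ double steps (at $j=2m-i+1$); every further common vertex must then be produced by a type (iii) event, each contributing only a $\constinlemma{j}^{-1}$ factor, giving the second product in the formula. The main obstacle I anticipate is the swap argument underpinning the monotone-packing step, together with the bookkeeping needed to verify that the generous constant $8^s$ indeed absorbs all of the case-analysis overhead across all admissible distributions of creation events.
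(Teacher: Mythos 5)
Part (a) is fine, and your step-by-step extension with the three creation mechanisms (one-sided containment, its mirror, and identical extension) is exactly the case analysis the paper performs: it yields the bounds $2\seqspanconst{\specialindex}(\specialindex-i+1)$ for one new common vertex and $(\specialindex-i)^2$ for two, and the leading product $\prod_{j=m+1}^{\ell}\constinlemma{j}^2$ arises the same way. The paper then closes the argument by induction on $s$, conditioning on the \emph{first} position $\specialindex$ at which a new overlap occurs and invoking \crefitemintheorem{lem:bound:auxfunc}{item:lem:bound:auxfunc:b}, which states that $\sum_{j=m+1}^{\ell}\bigl(\tfrac{2(j+1-i)\auxfunc{j}{i+1}{s-1}}{\constinlemma{j}}+\tfrac{(j-i)^2\auxfunc{j}{i+2}{s-2}}{8\constinlemma{j}^2}\indicator{s\geq 2}\bigr)\leq 8\auxfunc{m}{i}{s}$; the factor $8$ per induction level is precisely the constant coming from summing a geometric series over the positions $\specialindex$.

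Your substitute for that step — the ``monotone-packing (swap) argument'' — is where I see a genuine gap. First, the stated mechanism is backwards: if creation events are delayed, the intersection size $i_{j-1}$ stays small longer while $j$ grows, so the gap $j-i_{j-1}$ at an event position is \emph{larger}, not smaller, for late placements. The reason early packing nevertheless dominates is not the numerator but the denominator: the auxiliary constants satisfy $\constinlemma{j+1}/\constinlemma{j}\geq 3$ (this is the sole reason $\constinlemma{j}$ is introduced in place of $\seqspanconst{j}$, for which the ratio bound fails near $j=\criticalsizesequential$), and this geometric growth beats the at-most-linear growth of the numerators, so that moving an event one step later shrinks its contribution by a factor bounded away from $1$. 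You never invoke this property, and without it the swap argument does not close. Second, ``$8^s$ absorbs the sum over patterns'' cannot be justified by counting patterns and taking a maximum: the number of placements of the creation events among the $\ell-m$ steps is of order $\binom{\ell-m}{s}$, which is not controlled by $8^s$ when $\ell-m$ is large. What actually bounds the sum is the geometric decay in the position of each event, yielding a constant factor per event (the paper's $6+2=8$), and establishing this requires the ratio computations of Appendix~B (or an equivalent induction on $s$). As written, your proposal identifies the correct extremal configuration and the correct per-step bounds but does not prove that the aggregate over all configurations is dominated by it within the claimed $8^s$.
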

Let us first remark that the functions $\constinlemma{j}$ are only introduced to ensure that the ratio $\frac{\constinlemma{j}}{\constinlemma{j-1}}$ is at least $3$ for all values of $1 \leq j \leq  \criticalsizesequential$ (which does not hold for the functions $\seqspanconst{j}$).
To give some further intuition on the bounds stated in \Cref{lem:bound:seqoverlap:extensions}, observe that the term $\prod_{j=m+1}^{\ell}\constinlemma{j}^2$ is an upper bound on the number of extensions of a pair $\{\seq{v}{m},\seq{w}{m}\}$ of sequences of length $m$ to a pair $\{\seq{v}{\ell}', \seq{w}{\ell}'\}$ of sequences of lenght $\ell$, in which the extended sequences do not have to overlap any further. In a sense, the function $\auxfunc{\cdot}{\cdot}{\cdot}$ quantifies how much having overlap restricts a pair of extensions. In particular, in order to extend a sequence $\seq{v}{m}$ by a vertex that is also appearing in $\seq{w}{m}$, the number of choices for this vertex are reduced from $\seqspanconst{m+1}$ to the number of vertices appearing in $\seq{w}{m}$. 

In order to prove \Cref{lem:bound:seqoverlap:extensions}, we will first show some general properties of the  function $\auxfunc{\cdot}{\cdot}{\cdot}$. 
\begin{lem}\label{lem:bound:auxfunc}
    For any  $(i,s) \in \N^2$ the following hold. 
    \begin{enumerate}[label = (\alph*)]
        \item\label{item:lem:bound:auxfunc:a} The function $\auxfunc{\cdot}{i}{s}$, as defined in \eqref{eq:def:auxfunc}, is decreasing in the first coordinate.
        \item\label{item:lem:bound:auxfunc:b} For all $m \geq i-1 $ and $s\geq 1$ 
        \begin{align*}
        \sum_{j= m+1}^{\ell}   \left(\frac{2(j+1-i) \auxfunc{j}{i+1}{s-1}}{\constinlemma{j}} + \frac{(j-i)^2  \auxfunc{j}{i+2}{s-2}}{8{\constinlemma{j}}^2}\indicator{s \geq 2}  \right) \leq 8\auxfunc{m}{i}{s}.
    \end{align*}
    \end{enumerate}
\end{lem}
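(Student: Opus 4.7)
My plan is to verify both parts by direct case analysis on the definition \eqref{eq:def:auxfunc} of $\auxfunc{m}{i}{s}$, exploiting the multiplicative separation $\constinlemma{j+1}\ge 3\constinlemma{j}$ that follows immediately from the recursion $\constinlemma{j}=\max\{\seqspanconst{j},3\constinlemma{j-1}\}$. Part (a) is almost purely combinatorial bookkeeping, while part (b) reduces to geometric-series estimates once (a) is in hand.

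For part (a), I would split according to whether $m$ and $m+1$ lie in the ``case 1'' branch $m-i+1<s/2$ or the ``case 2'' branch: the three relevant sub-cases are both in case 2, both in case 1, or the transitional situation where $m$ is in case 1 and $m+1$ is in case 2. In each sub-case I would shift the product index $j\mapsto j-1$ in $\auxfunc{m+1}{i}{s}$ to align it with the ranges appearing in $\auxfunc{m}{i}{s}$. Each resulting pair of corresponding squared factors has ratio at most $\bigl(\tfrac{2m-i+3-j}{2m-i+2-j}\bigr)^{2}\bigl(\tfrac{\constinlemma{j}}{\constinlemma{j+1}}\bigr)^{2}\le 4\cdot\tfrac{1}{9}=\tfrac{4}{9}<1$, since $2m-i+2-j\ge 1$ on the summation range; the odd-$s$ bracket contributes at most $2\cdot\tfrac{1}{3}=\tfrac{2}{3}$. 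The only nontrivial bookkeeping is at the transitional index $j=2m-i+2$, where a squared factor in $\auxfunc{m+1}{i}{s}$ meets a single $\constinlemma{\cdot}^{-1}$ factor in $\auxfunc{m}{i}{s}$; this works out because $\constinlemma{2m-i+3}\ge 3\constinlemma{2m-i+2}$ provides the extra power needed.

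For part (b), I observe that both summands on the left-hand side have the form $a_{j}\auxfunc{j}{i+r}{s-r}$ for $r\in\{1,2\}$. The ratio of successive summands in $j$ equals (monotonicity factor from part (a)) $\times\ \tfrac{\constinlemma{j}}{\constinlemma{j+1}}\times$ (ratio of polynomial factor), and the polynomial factors $(j+1-i)$ resp.\ $(j-i)^{2}$ grow by at most a factor $2$ (resp.\ $4$) per step, while $\constinlemma{j+1}/\constinlemma{j}\ge 3$ dominates. Hence each sum is bounded by a geometric series with ratio $\le 2/3$, collapsing into a constant (at most $3$) times its $j=m+1$ term. The remaining task is to compare the leading $j=m+1$ term against $\auxfunc{m}{i}{s}$, which is a direct unfolding of \eqref{eq:def:auxfunc} and needs checking for the four combinations of parity of $s$ and of which branch (case 1 vs case 2) applies to $(m,i,s)$ versus $(m+1,i+r,s-r)$. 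In each combination, the leading term becomes $\auxfunc{m}{i}{s}$ times a factor involving one step of the product, and the constant $8$ on the right-hand side absorbs the sum of the two geometric totals plus the boundary-case overhead.

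The main obstacle I expect is the case-1/case-2 transitions in both parts, particularly the transitional summand in (a) and the degenerate endpoints $s\in\{1,2\}$ in (b) where the $\indicator{s\ge 2}$ term drops or the bracket in the definition of $\Psi$ changes parity. The quantitative estimates have substantial slack (the arguments only really need $\constinlemma{j+1}/\constinlemma{j}>2$, whereas $3$ is available), so the genuine content is systematic verification across cases rather than any sharp estimate; I would present the details as four short sub-computations mirroring the structure of the definition.
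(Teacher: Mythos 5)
Your plan matches the paper's proof: part (a) is the same branch-by-branch comparison of $\auxfunc{m+1}{i}{s}$ with $\auxfunc{m}{i}{s}$ driven by $\constinlemma{j+1}\ge 3\constinlemma{j}$ (the paper telescopes the products exactly rather than comparing shifted factors one by one, which is a cosmetic difference), and part (b) is the same geometric-series collapse onto the leading term followed by a case check of the ratios $\auxfunc{m+1}{i+1}{s-1}/\auxfunc{m}{i}{s}$ and $\auxfunc{m+1}{i+2}{s-2}/\auxfunc{m}{i}{s}$. The one detail your outline glosses over is the boundary configuration $m+1=i$, where the $j=m+1$ term of the second sum vanishes (the factor $(j-i)^2$ is zero there, so the stepwise ratio bound fails at the first step) and the series must instead be anchored at $j=m+2$, exactly as the paper does with its $\indicator{m+1=i}$ branch.
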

The proof of \Cref{lem:bound:auxfunc} is  simple but tedious and is therefore deferred to the \Cref{sec:appendix_bounds}. We proceed by showing \Cref{lem:bound:seqoverlap:extensions}.

\begin{proof}[Proof of \Cref{lem:bound:seqoverlap:extensions}]
Note first that \crefitemintheorem{lem:bound:seqoverlap:extensions}{eq:seqspanconst_smaller_constinlemma} is a trivial observation.

Continuing with \crefitemintheorem{lem:bound:seqoverlap:extensions}{eq:bound:seqoverlap:auxiliary} we will prove the statement using induction on $s$, starting with the base case $s = 0$. By the definition of $$\seqspanconst{j+1}:=    \left| \setbuilder{\vertexvec{w} \in V}{\seq{v}{j+1} =(\vertexvec{v}_0,\ldots,\vertexvec{v}_{j},\vertexvec{w}) \in \seqset{j+1} } \right|,$$ there are at most $\seqspanconst{j+1}$ ways to extend a \feasible{} sequence of \seqsize{} $j$. Thus, sequentially extending both sequences (and ignoring any potential overlaps) yields together with \ref{eq:seqspanconst_smaller_constinlemma} the upper bound  
    \begin{align*}
     \extnum{m}{i}{0} \leq  \prod_{j = m+1}^{\ell} \seqspanconst{j}^2 \leq   \prod_{j = m+1}^{\ell} \constinlemma{j}^2 = \left(\prod_{j = m+1}^{\ell} \constinlemma{j}^2 \right) \auxfunc{m}{i}{0},
    \end{align*} proving the induction base. 
  
We proceed with the induction hypothesis. Let $\set{\seq{v}{m}, \seq{w}{m}} \in \seqoverlap{m}{i}$, let $s \in [\ell]$ and assume that \crefitemintheorem{lem:bound:seqoverlap:extensions}{eq:bound:seqoverlap:auxiliary} holds for all $j \in [s-1]$.
   For a fixed index $m+1 \leq \specialindex \leq \ell$ and $\tau \in \{1,2\}$ let us consider the number $\extnumindex{m}{i}{s}{\specialindex}{\tau}$ of extensions $\{\seq{v}{\ell} = (\vertexvec{v}_0,\ldots,\vertexvec{v}_\ell), \seq{w}{\ell} = (\vertexvec{w}_0,\ldots,\vertexvec{w}_\ell)\} \subseteq \seqset{\ell}$ satisfying $|\seq{v}{\specialindex-1} \cap \seq{w}{\specialindex-1} | = i$ and $|\seq{v}{\specialindex} \cap \seq{w}{\specialindex} | = i+\tau$, i.e., $\specialindex$ is the first index larger than $m$ at which the extended sequences contain an additional vertex that is also contained in the other sequence up to this point. 
   
   We will compute  $\extnumindex{m}{i}{s}{\specialindex}{\tau}$ in three steps: First, we will compute the number of possible extensions of $\seq{v}{m}$ and $\seq{w}{m}$ to sequences $\seq{v}{\specialindex-1}$ and $\seq{w}{\specialindex-1}$, then count the number of ways to extend those to sequences $\seq{v}{\specialindex}$ and $\seq{w}{\specialindex}$ and finally extend those to sequences $\seq{v}{\ell}$ and $\seq{w}{\ell}$. 
   Analogously to the induction base, there  are at most $\prod_{j = m+1}^{\specialindex-1} \seqspanconst{j}^2$ ways to extend the sequences $\seq{v}{m}$ and $\seq{w}{m}$ to sequences $\seq{v}{\specialindex-1}$ and $\seq{w}{\specialindex-1}$. To compute the number of ways how to  further extend those sequences, we distinguish two cases. 
   \begin{itemize}
       \item[\textbf{Case 1:}] $|\seq{v}{\specialindex} \cap \seq{w}{\specialindex} | = i+1$. \\
       By definition of $\specialindex$, one of the following has to hold in this case.  
       \begin{enumerate}
        \item $\vertexvec{v}_{\specialindex} \in \{\seq{w}{\specialindex-1}\}$ and $ \vertexvec{w}_{\specialindex} \notin \{\seq{v}{\specialindex}\}$; 
        \item $ \vertexvec{w}_{\specialindex} \in \{\seq{v}{\specialindex-1}\}$ and $\vertexvec{v}_{\specialindex} \notin \{\seq{w}{\specialindex}\}$;
        \item $\vertexvec{v}_{\specialindex} = \vertexvec{w}_{\specialindex}$.
    \end{enumerate}
    In the first subcase, there are at most $\seqspanconst{\specialindex}$ ways for picking  $ \vertexvec{w}_{\specialindex}$. Furthermore, $\vertexvec{v}_{\specialindex}$ has to be one of the $\specialindex-i$ vertices contained in $\set{\seq{w}{\specialindex-1}}\setminus \set{\seq{v}{\specialindex-1}}$. Thus, the number of extensions in this subcase can be bounded from above by  $\seqspanconst{\specialindex} (\specialindex-i)$. 
    The second subcase is symmetric to the first, so let us continue with the last subcase. 
    Clearly, if $\vertexvec{v}_{\specialindex} = \vertexvec{w}_{\specialindex}$ then it follows  that there are at most $\seqspanconst{\specialindex}$ ways to extend the sequences. 
    In total, we obtain that the number of ways to extend $\seq{v}{\specialindex-1}$ and $\seq{w}{\specialindex-1}$ to $\seq{v}{\specialindex}$ and $\seq{w}{\specialindex}$ is bounded from above by $\seqspanconst{\specialindex}(2(\specialindex-i)+1) \leq 2\seqspanconst{\specialindex}(\specialindex-i+1)$. 
    \item[\textbf{Case 2:}] $|\{\seq{v}{\specialindex}\} \cap \{\seq{w}{\specialindex}\} | = i+2$. \\
    In this case it follows that $\vertexvec{v}_{\specialindex} \in \{\seq{w}{\specialindex-1}\}$ and $\vertexvec{w}_{\specialindex} \in \{\seq{v}{\specialindex-1}\}$. Similar to the previous case, $\vertexvec{v}_{\specialindex}$ has to be one of the $\specialindex-i$ vertices contained in $\set{\seq{w}{\specialindex-1}}\setminus \set{\seq{v}{\specialindex-1}}$  and analogously $\vertexvec{w}_{\specialindex}$ has to be one of the $\specialindex-i$ vertices contained in $\set{\seq{v}{\specialindex-1}}\setminus \set{\seq{w}{\specialindex-1}}$. Thus, there are at most $(\specialindex-i)^2$ possibilities to extend. 
   \end{itemize}
   It remains to bound the number of ways to extend the sequences $\seq{v}{\specialindex}$ and $\seq{w}{\specialindex}$ to sequences  $\seq{v}{\ell}$ and $\seq{w}{\ell}$. In the first case, this number is clearly given as  $\extnum{\specialindex}{i+1}{s-1}$ and in the second case as  $\extnum{\specialindex}{i+2}{s-2}$. Putting everything together and using the induction hypothesis \crefitemintheorem{lem:bound:seqoverlap:extensions}{eq:bound:seqoverlap:auxiliary} we obtain
   \begin{align}
       \extnumindex{m}{i}{s}{\specialindex}{1} &\leq \left( \prod_{j = m+1}^{\specialindex-1} \seqspanconst{j}^2\right)  2\seqspanconst{\specialindex}(\specialindex-i+1)\extnum{\specialindex}{i+1}{s-1} \notag\\
       \overset{Lem.~\ref{lem:bound:seqoverlap:extensions} (b)}&{\leq} \left( \prod_{j = m+1}^{\specialindex-1} \seqspanconst{j}^2\right)  2\seqspanconst{\specialindex}(\specialindex-i+1)8^{s-1} \left( \prod_{j = \specialindex+1}^{\ell} \constinlemma{j}^2\right)\auxfunc{\specialindex}{i+1}{s-1}. \label{eq:prf:extlem:1}
   \end{align}
   Using  \crefitemintheorem{lem:bound:seqoverlap:extensions}{eq:seqspanconst_smaller_constinlemma} in \eqref{eq:prf:extlem:1} we thus obtain
   \begin{align}
       \extnumindex{m}{i}{s}{\specialindex}{1} \overset{Lem.~\ref{lem:bound:seqoverlap:extensions} (a)}&{\leq}\left( \prod_{j = m+1}^{\specialindex-1} \constinlemma{j}^2\right)  2\constinlemma{\specialindex}(\specialindex-i+1)8^{s-1} \left( \prod_{j = \specialindex+1}^{\ell} \constinlemma{j}^2\right)\auxfunc{\specialindex}{i+1}{s-1}  \notag\\
       &= 8^{s-1}  \left( \prod_{j = m+1}^{\ell} \constinlemma{j}^2\right) \frac{2(\specialindex-i+1)\auxfunc{\specialindex}{i+1}{s-1}}{\constinlemma{\specialindex}}.\label{eq:prf:extlem:2}
   \end{align} 
   Analogously to \eqref{eq:prf:extlem:1} and \eqref{eq:prf:extlem:2} we obtain for $\extnumindex{m}{i}{s}{\specialindex}{2}$ that 
   \begin{align}\label{eq:prf:extlem:3}
       \extnumindex{m}{i}{s}{\specialindex}{2} \leq 8^{s-2}  \left( \prod_{j = m+1}^{\ell} \constinlemma{j}^2\right) \frac{(\specialindex-i ) \auxfunc{\specialindex}{i+2}{s-2}}{\constinlemma{\specialindex}^2}.
   \end{align}
   Summing over all possible values of $\specialindex$, it follows from  \eqref{eq:prf:extlem:2} and \eqref{eq:prf:extlem:3} that

\begin{align}
       & \extnum{m}{i}{s} \leq \sum_{\specialindex = m+1}^{\ell} \extnumindex{m}{i}{s}{\specialindex}{1}+ \extnumindex{m}{i}{s}{\specialindex}{2} \notag\\ 
       &\leq 8^{s-1} \left(\prod_{j=m+1}^{\ell} \constinlemma{j}^2\right)  \sum_{\specialindex= m+1}^{\ell}   \left(\frac{2(\specialindex+1-i) \auxfunc{\specialindex}{i+1}{s-1}}{\constinlemma{\specialindex}} + \frac{(\specialindex-i)^2  \auxfunc{\specialindex}{i+2}{s-2}}{8{\constinlemma{\specialindex}}^2}  \right).  \label{eq:prf:extlem:4}
   \end{align}
   Finally, plugging the bounds from \crefitemintheorem{lem:bound:auxfunc}{item:lem:bound:auxfunc:b} into \eqref{eq:prf:extlem:4} we obtain
   \begin{align*}
        \extnum{m}{i}{s} \leq 8^s  \left(\prod_{j=m+1}^{\ell} \constinlemma{j}^2\right) \auxfunc{m}{i}{s}, 
   \end{align*} completing the proof. 
\end{proof}

We are now ready to proof \Cref{lem:bound:seqoverlap}. 
\begin{proof}[Proof of \Cref{lem:bound:seqoverlap}]
Recall $\criticalsizesequential:=\rounddown{\frac{n}{2}}$ and let $\ell \in [\criticalsizesequential]\cup\{0\}$ and $i \in [\ell+1]$. Splitting the set $\seqoverlap{\ell}{i} \coloneqq \setbuilder{\{\seq{v}{\ell},\seq{w}{\ell} \} \subseteq \seqset{\ell}}{\left|\seq{v}{\ell}\cap \seq{w}{\ell}\right| = i}$ into parts, depending on the first index $m$ at which a pair $\set{\seq{v}{\ell},\seq{w}{\ell}}$ of \feasible{} sequences has non-empty overlap, we obtain
   \begin{align*}
       |\seqoverlap{\ell}{i}| &\leq \sum_{m = 0}^{\ell} \sum_{\{\seq{v}{m},\seq{w}{m}\} \in \seqoverlaplastindexone{m}} \extnum{m}{1}{i-1} + \sum_{m = 1}^{\ell} \sum_{\{\seq{v}{m},\seq{w}{m}\} \in \seqoverlaplastindextwo{m}} \extnum{m}{2}{i-2} \\
       &\leq \sum_{m = 0}^{\ell}\abs{\seqoverlaplastindexone{m}} \extnum{m}{1}{i-1} + \sum_{m = 1}^{\ell} \abs{\seqoverlaplastindextwo{m}} \extnum{m}{2}{i-2} .
   \end{align*}
Together with the bounds on  from \Cref{lem:bound:seqoverlaplastindex} and  \crefitemintheorem{lem:bound:seqoverlap:extensions}{eq:bound:seqoverlap:auxiliary} it follows that
   \begin{align}\label{eq:prf:boundseqoverlap:1}
       |\seqoverlap{\ell}{i}| \overset{Lem.~\ref{lem:bound:seqoverlaplastindex}}&{\leq} \sum_{m = 0}^{\ell}  \frac{2(m+1)|\seqset{m}|^2}{k^n} \extnum{m}{1}{i-1} + \sum_{m = 1}^{\ell}  \frac{m(m+1) |\seqset{m}|^2}{\seqspanconst{m} k^n} \extnum{m}{2}{i-2}. \notag\\
      \overset{Lem.~\ref{lem:bound:seqoverlap:extensions} (b)}&{\leq} \sum_{m = 0}^{\ell}  \frac{2(m+1)|\seqset{m}|^2}{k^n} 8^{i-1} \left(\prod_{j=m+1}^\ell \constinlemma{j}^2 \right) \auxfunc{m}{1}{i-1} \notag \\ &+\sum_{m = 1}^{\ell}  \frac{m(m+1) |\seqset{m}|^2}{\seqspanconst{m} k^n} 8^{i-2} \left(\prod_{j=m+1}^\ell \constinlemma{j}^2 \right) \auxfunc{m}{2}{i-2}.
    \end{align}  
    Now recall that by \Cref{lem:seqspancount} it holds that $\abs{\seqset{m}} = k^n \prod_{j =1}^m \seqspanconst{j}$. In particular, it follows from \crefitemintheorem{lem:bound:seqoverlap:extensions}{eq:seqspanconst_smaller_constinlemma} that 
    \begin{align}\label{eq:prf:boundseqoverlap:2}
        \frac{\abs{\seqset{m}}^2}{k^n} \leq k^n \left(\prod_{j=1}^{m} \constinlemma{j}^2\right)  \quad \text{and} \quad \frac{\abs{\seqset{m}}^2}{k^n\seqspanconst{m}} 
        \leq \frac{k^n \left(\prod_{j=1}^{m} \constinlemma{j}^2\right) }{\constinlemma{m}}.
    \end{align}
    Plugging \eqref{eq:prf:boundseqoverlap:2} into \eqref{eq:prf:boundseqoverlap:1} and extracting common factors from both sums we get 
    \begin{align}\label{eq:prf:boundseqoverlap:3}
        |\seqoverlap{\ell}{i}| &\leq 8^{i-1} k^n\left(\prod_{j=1}^\ell \constinlemma{j}^2 \right)  \left(\sum_{m = 0}^{\ell} 2(m+1) \auxfunc{m}{1}{i-1} + \sum_{m = 1}^{\ell} \frac{m(m+1)}{8 \constinlemma{m}} \auxfunc{m}{2}{i-2}\right) \notag \\
        &\leq 8^{i-1} k^n\left(\prod_{j=1}^\ell \constinlemma{j}^2 \right) \left(\ell+1\right)^2 \left(\sum_{m = 0}^{\ell} 2\auxfunc{m}{1}{i-1} + \sum_{m = 1}^{\ell} \frac{\auxfunc{m}{2}{i-2}}{8 \constinlemma{m}} \right).
    \end{align}
    By \crefitemintheorem{lem:bound:auxfunc}{item:lem:bound:auxfunc:a}, the function $\auxfunc{\cdot}{\cdot}{\cdot}$ is decreasing in the first coordinate and furthermore, the function $m\mapsto 1/\constinlemma{m}$ is decreasing in $m\in [\ell]$, and thus we have
    \begin{align}
        &\left(\sum_{m = 0}^{\ell} 2\auxfunc{m}{1}{i-1} + \sum_{m = 1}^{\ell} \frac{\auxfunc{m}{2}{i-2}}{8 \constinlemma{m}} \right) \leq \left(\sum_{m = 0}^{\ell} 2\auxfunc{0}{1}{i-1} + \sum_{m = 1}^{\ell} \frac{\auxfunc{1}{2}{i-2}}{8 \constinlemma{1}} \right) \notag\\
        &\leq 2(\ell+1) \left( \auxfunc{0}{1}{i-1} + \frac{\auxfunc{1}{2}{i-2}}{ \constinlemma{1}}\right)= 2(\ell+1) \left( \prod_{j=1}^{i-1} \constinlemma{j}^{-1} +\constinlemma{1}^{-1} \prod_{j=2}^{i-1} \constinlemma{j}^{-1} \right) \notag \\
        &= 4(\ell+1) \left( \prod_{j=1}^{i-1} \constinlemma{j}^{-1} \right),\label{eq:prf:boundseqoverlap:4}
    \end{align} where the penultimate equality is due to the definition  \eqref{eq:def:auxfunc} of $\auxfunc{\cdot}{\cdot}{\cdot}$.
    Combining \eqref{eq:prf:boundseqoverlap:3} and \eqref{eq:prf:boundseqoverlap:4} we obtain
       \begin{align}
|\seqoverlap{\ell}{i}| \overset{\eqref{eq:prf:boundseqoverlap:3}}&{\leq}  8^{i-1} k^n\left(\prod_{j=1}^\ell \constinlemma{j}^2 \right) \left(\ell+1\right)^2 \left(\sum_{m = 0}^{\ell} \auxfunc{m}{1}{i-1} + \sum_{m = 1}^{\ell} \frac{\auxfunc{m}{2}{i-2}}{8 \constinlemma{m}} \right) \notag\\
\overset{\eqref{eq:prf:boundseqoverlap:4}}&{\leq} 8^{i-1} k^n\left(\prod_{j=1}^\ell \constinlemma{j}^2 \right) \left(\ell+1\right)^2 4(\ell+1) \left( \prod_{j=1}^{i-1} \constinlemma{j}^{-1} \right) \leq  \frac{8^i k^n (\ell+1)^3 \left(\prod_{j=1}^\ell \constinlemma{j}^2 \right)}{\left(\prod_{j=1}^{i-1} \constinlemma{j} \right)}. \label{eq:prf:boundseqoverlap:5}
   \end{align}
   To complete the proof, we need to replace the auxiliary functions $\constinlemma{j}$ with the functions $\seqspanconst{j}$. To that end, observe that by \Cref{lem:seqspancount} it holds for all $1 \leq j \leq \criticalsizesequential-7$ that
   \begin{align*}
       \frac{\seqspanconst{j+1}}{\seqspanconst{j}} = \frac{\binom{n-2j}{2}k^2}{\binom{n-2j+2}{2}} \geq 3,
   \end{align*} and thus in particular that 
   $\seqspanconst{j} = \constinlemma{j}$ for $1 \leq j \leq \criticalsizesequential-6$. 
   By the construction of $\constinlemma{j}$ it now follows that 
   \begin{align*}
     \constinlemma{\criticalsizesequential-5} \leq 3 \constinlemma{\criticalsizesequential-6} = 3 \seqspanconst{\criticalsizesequential-6} \leq 3 \seqspanconst{\criticalsizesequential-5}.
   \end{align*} Repeating this step we obtain for $1 \leq j \leq 6$ that 
   \begin{align*}
       \constinlemma{\criticalsizesequential-6+j} \leq 3^j \seqspanconst{\criticalsizesequential-6+j},
   \end{align*} and thus
   \begin{align*}
       \left(\prod_{j=1}^{\criticalsizesequential} \constinlemma{j}^2 \right) =  \left(\prod_{j=1}^{\criticalsizesequential-6} \seqspanconst{j}^2 \right) \left(\prod_{j=1}^{6} \constinlemma{\criticalsizesequential-6+j}^2 \right)\leq  \left(\prod_{j=1}^{\criticalsizesequential} \seqspanconst{j}^2 \right) \left(\prod_{j=1}^{6} 3^{2j} \right) = 3^{42} \left(\prod_{j=1}^{\criticalsizesequential} \seqspanconst{j}^2 \right).
   \end{align*}
Clearly, this extends to arbitrary values of $1 \leq \ell \leq \criticalsizesequential$, i.e., 
   \begin{align}
       \left(\prod_{j=1}^\ell \constinlemma{j}^2 \right) \leq 3^{42} \left(\prod_{j=1}^\ell \seqspanconst{j}^2 \right).\label{eq:Dj}
   \end{align} 
  Combining \eqref{eq:prf:boundseqoverlap:5} and \eqref{eq:Dj}  we obtain
   \begin{align*}
       |\seqoverlap{\ell}{i}| \overset{\eqref{eq:prf:boundseqoverlap:5}}{\leq} \frac{8^ik^n(\ell+1)^3\left(\prod_{j=1}^\ell \constinlemma{j}^2 \right)}{\left(\prod_{j=1}^{i-1} \constinlemma{j} \right)} \overset{\eqref{eq:Dj}}{\leq} \Theta(1) \frac{8^i k^n (\ell+1)^3\left(\prod_{j=1}^\ell \seqspanconst{j}^2 \right)}{\left(\prod_{j=1}^{i-1} \seqspanconst{j} \right)} = \Theta(1) \frac{8^i(\ell+1)^3|\seqset{\ell}|^2}{|\seqset{i-1}|},
   \end{align*} where the last equality uses \eqref{eq:Sell}. 
\end{proof}

We continue by showing that the expectation of $\seqspannum{i}$ tends to infinity for all $0 \leq i \leq \criticalsizesequential. $
\begin{proof}[Proof of \Cref{lem:expec_to_inf}]
  Let $i \in [\criticalsizesequential]\cup\{0\}$. We start by bounding $\expec{\seqspannum{i}}$ from below. By \eqref{eq:expec(seqspannum)} and \Cref{lem:seqspancount} it holds that 
    \begin{align}
        \expec{\seqspannum{i}} \overset{\eqref{eq:expec(seqspannum)}}{=}  |\seqset{i}| p^{i+1} \overset{\eqref{eq:Sell}}{=} \frac{n!}{(n-2i)!}  (k-1)^{2i} 2^{-i} k^{n+i^2-i} p^{i+1} \notag \\
        \geq \frac{n!}{(n-2i)!}  (k-1)^{2i} 2^{-i}  k^{n+i^2-i} \left(200 n^{-2} k^{-2\sqrt{n}+1}\right)^{i+1},\label{eq:prf:expec_to_inf:1}
    \end{align} where the last inequality uses $p \geq p^* := 200 n^{-2} k^{-2\sqrt{n}+1}$. 
    To further bound this expression we  will use the following estimates: 
    \begin{align*}
    \frac{n!}{(n-2i)!} &= n^{2i} \prod_{j=1}^i\left(1-\frac{j}{n}\right) \geq n^{2i} \exp\left(-\frac{2i^2}{n}\right) \geq n^{2i} e^{-i};\\
    (k-1)^{2i} &= k^{2i}\left(1-\frac{1}{k}\right)^{2i} \geq k^{2i} 2^{-2i}.
    \end{align*} 
    Plugging these into \eqref{eq:prf:expec_to_inf:1} and simplifying we obtain
    \begin{align*}
        \expec{\seqspannum{i}} \geq  n^{2i} e^{-i} k^{2i} 2^{-2i}  2^{-i} k^{n+i^2-i}  \left(200 n^{-2} k^{-2\sqrt{n}+1}\right)^{i+1} = 200n^{-2} \left( \frac{200}{8e}\right) k^{i^2-i(2 \sqrt{n}-2)+n-2\sqrt{n}+1}.
    \end{align*}
    It thus follows that 
     \begin{align}
     \frac{\expec{\seqspannum{i}}}{\Th{n^4}8^{i+1}}  &\geq \Theta(n^{-6}) \left(\frac{200}{64e}\right)^i k^{i^2-i(2 \sqrt{n}-2)+n-2\sqrt{n}+1} \notag \\ 
     &\geq \Th{n^{-6}}\left(\frac{10}{9}\right)^i k^{i^2-i( 2\sqrt{n}-2)+n-2\sqrt{n}+1}.\label{eq:Xiexpectation}
    \end{align}
   To derive a lower bound on \eqref{eq:Xiexpectation} we focus on the   the function $\hat g:\mathbb R^2 \to \mathbb R, x\mapsto x^2-x( 2\sqrt{n}-2)+n-2\sqrt{n}+1$ in the exponent. Observing that the function $\hat g$ defines a convex parabola centered at $\sqrt{n}- 1$, it takes its minimum value of $0$ at the minimiser $x= \sqrt{n}- 1$. 
   Thus, we obtain that for all $i \in [\criticalsizesequential]\cup\{0\}$ 
    \begin{align*}
        \frac{\expec{\seqspannum{i}}}{\Theta(n^4)8^{i+1}} \geq \Th{n^{-6}} \left(\frac{10}{9}\right)^{\sqrt{n}-1} = \smallomega{1},
    \end{align*} completing the proof.
\end{proof}

Finally we show that in the case when $n$ is odd, say $n =2m+1$ for  $m\in \mathbb N_0$, we can always find an infected vertex to extend an infected \feasible{} sequence of \seqsize{} $m$.
\begin{proof}[Proof of \Cref{lem:odd_case_distinction}]
    Let $n = 2m+1$ for $m \in \mathbb N_0$. For an arbitrary projection $\subcube{}$ of dimension $2m$, the number of vertices in $G$  that are not contained in $\subcube{}$ is clearly given as
    $(k-1)k^{2m}$. Thus, using $p\geq p^*:= 200 n^{-2} k^{-2\sqrt{n}+1}$, the probability that none of these vertices were initially infected, can be bounded from above by 
    \begin{align*}
        (1-p)^{(k-1)k^{2m}} \leq \exp \left( -pk^{2m} \right) \leq \exp \left( -k^{\frac{n}{2}}\right).
    \end{align*}
Now observe that there are $nk$ \projections{} of dimension $2m$ in $G$. Indeed, there are $n$ ways for choosing the single coordinate in which a \projection{} of dimension $2m$ is not complete and $k$ further possibilities for choosing a vertex of the base graph $K_k$ in this coordinate. Thus it follows by a union bound that the probability that there exists a \projection{} $\subcube{}$ of dimension $2m$ such that no vertex in $G \setminus \subcube{}$ was initially infected can be bounded from above by 
    \begin{align*}
        nk \exp \left( -k^{\frac{n}{2}}\right) = o(1).
    \end{align*}
\end{proof}

\section{Concluding Discussion}\label{sec:Discussion}

In \Cref{thm:main:threshold} we determined the asymptotic value of the critical probability up to  multiplicative constants:
\begin{align}
p_c\left(\square_{i=1}^n K_k, 2\right) = \Th{ n^{-2}k^{-2\sqrt{n}+1}}.
\label{eq:KMS}
\end{align}
The corresponding result for the case $k=2$ was proved in the earlier work of Balogh and Bollob\'{a}s\cite{BaBo2006} which was improved by Balogh, Bollob\'{a}s and Morris\cite{BaBoMo2010}: 
\begin{align}
 \frac{16\lambda}{n^2}\parens{1+\frac{\log n}{\sqrt{n}}}2^{-2\sqrt{n}} \leq p_c(\square_{i=1}^n K_2,2) \leq \frac{16\lambda}{n^2}\parens{1+\frac{ 5 \log^2 n}{\sqrt{n}}}2^{-2\sqrt{n}},\label{eq:BBMsharp}
\end{align} where $\lambda \approx 1.166$.
In order to improve our result, e.g., to obtain a sharp threshold, proof techniques in this paper might be useful. One of the key ingredients in our proof is the bound on the probability of a \projection{} of dimension $2\ell$ being {\em internally spanned} (see \Cref{lem:probintspanupperbound}): This bound is off precisely by factor $\lambda^\ell$, in comparison to the corresponding bound to obtain \eqref{eq:BBMsharp}. Getting more precise asymptotics on the probability of a \projection{} being internally spanned is the most crucial step to obtain a sharp threshold. 
\begin{problem}
    Determine a sharp threshold for $p_c\left(\square_{i=1}^n K_k, 2\right)$ for any $k \geq 3$. 
\end{problem}
The Hamming  graph is a very natural example of a Cartesian product graph and has quite useful properties, e.g., it is regular and vertex-transitive. We believe the proof techniques in this paper are also applicable to similar types of graphs and especially to Cartesian product graphs with different kinds of base graphs. An interesting problem in this context is to determine a set of minimal properties the base graphs have to satisfy in order to obtain a threshold in the flavour of \Cref{thm:main:threshold}.
\begin{problem}
    Determine $p_c\left(G, 2\right)$ for other types of Cartesian product graphs $G$. 
\end{problem}
Lastly, we note that throughout this paper we only considered the infection parameter $r=2$. For $r\geq 3$,  the problem of determining the critical probability  $p_c\left(\square_{i=1}^n K_k, r\right)$ is widely open even for the case $k=2$ and $r=3$. 
\begin{problem}
Determine $p_c\left(\square_{i=1}^n K_k, 3\right)$ for arbitrary $k\geq 2$.
\end{problem}
For the special case when $r= \roundup{\frac{n}{2}}$ and $k=2$, i.e., in the so-called  {\em majority bootstrap percolation} on the $n$-dimensional hypercube  $Q^n=[2]^n=\square_{i=1}^n K_2$,  a sharp threshold was established by Balogh, Bollob\'{a}s and Morris in \cite{BaBoMo2009majority} and their results have been extended to Cartesian products of bounded size regular graphs by Collares, Erde, Geisler and Kang \cite{CoErGeKa2024}. For arbitrary $r \geq 3$, Morrison and Noel answered  the extremal question of determining a smallest percolating set in $Q^n$  in \cite{MoNo2018}; regarding the critical probability,  Balogh, Bollob\'{a}s and Morris \cite[Conjecture 6.3]{BaBoMo2010} conjectured the following in 2010, but there is still no progress:
 \begin{align*}
    p_c\left(\square_{i=1}^n K_2,r\right) = \expp{-\Th{n^{1/2^{r-1}}}}.
\end{align*}   
The sharp threshold \eqref{eq:BBMsharp} suggests that the base of the exponential function in the above conjecture would be two and thus the conjecture could be reformulated as
 \begin{align*}
  -\log_2  p_c\left(\square_{i=1}^n K_2,r\right) =  \Th{n^{1/2^{r-1}}}.
\end{align*}  
In view of our threshold result \eqref{eq:KMS} we make an analogous conjecture:
\begin{conj} 
For arbitrary $r \geq 3$ and $k\geq 2$, 
 \begin{align*}
-\log_k  p_c\left(\square_{i=1}^n K_k,r\right) = \Th{n^{1/2^{r-1}}}.
\end{align*}   
\end{conj}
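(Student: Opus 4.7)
The plan is to mirror the two-phase strategy of \Cref{thm:main:threshold} --- an upper bound via sequentially spanning structures and a lower bound via hierarchies with a correctly chosen critical dimension --- but adapted from $r=2$ to arbitrary $r\ge 3$. The scaling $n^{1/2^{r-1}}$ is consistent with the following heuristic: in the $r=2$ hierarchy the dimension of the growing internally spanned projection roughly doubles at each merge, giving $\criticaldimension = \Theta(\sqrt{n})$; for general $r$ each merge should boost the dimension by a higher-order amount (heuristically, squaring), so the critical seed has dimension $\ell^* = \Theta\parens{n^{1/2^{r-1}}}$, and balancing $p^{\Theta(\ell^*)}$ against the $\binom{n}{\ell^*}k^{n-\ell^*}$ choices of a seed projection yields $-\log_k p_c = \Theta\parens{n^{1/2^{r-1}}}$.

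First I would recast the analogue of \Cref{lem:projections_span_projection}: identify the minimal combinatorial configurations under which a family of projections jointly spans a strictly larger projection under $r$-neighbour bootstrap percolation on the Hamming graph. For $r=2$ this is a pair at distance at most $2$; for $r\ge 3$ one must determine which tuples of projections (presumably $r$ projections within small pairwise distance, together with appropriate \lq filling\rq\ constraints ensuring that enough common neighbours acquire $r$ infected neighbours) can generate a new projection. With this in hand, analogues of the candidate-quadruple and witnessing-quadruple formalism of \Cref{sec:LowerBound} would be defined, and the hierarchy decomposition of \Cref{lem:bootstrapprozess} rewritten to track $r$-wise merges. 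The inductive bound \Cref{lem:probintspanupperbound} on $\probcubeintspan{m}$ would then follow from solving the appropriate recursion (whose solution produces $\criticaldimension = \Theta(n^{1/2^{r-1}})$), and the rest of \Cref{sec:LowerBound} should go through \emph{mutatis mutandis} after identifying the correct analogue of $\upboundcubeintspan{\cdot}$.

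For the upper bound, I would generalize \Cref{def:seqspanningsequences} so that successive terms in a sequentially spanning sequence increment the dimension of the spanned projection by the $r$-dependent amount identified in the previous step, extend \Cref{lem:seqspancount} and \Cref{lem:bound:seqoverlap} by a careful enumeration of extensions and overlaps, and then replay the second-moment argument of \Cref{sec:UpperBound} with the new $\criticalsizesequential = \Theta(n^{1/2^{r-1}})$.

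The main obstacle lies in the very first step: for $r\ge 3$, even identifying the right notion of \lq small families of projections that interact to yield a new projection\rq\ is subtle, and this structural difficulty is precisely what has kept the analogous Balogh--Bollob\'as--Morris conjecture open on the hypercube since 2010. A potential simplification unique to the Hamming setting is that, for $k\ge r+1$, an entire $K_k$-fibre becomes infected as soon as it contains $r$ infected vertices (since every healthy vertex in the fibre then has $r$ infected fibre-neighbours); this could substantially reduce the combinatorial complexity when $k$ is large, possibly yielding the conjecture for $k\ge r+1$ while the $k=2$ case remains out of reach. The genuinely hard regime is therefore expected to be $r\ge 3$ with $k$ small --- and in particular $k=2$ --- where the explosion of cases in the hierarchy argument is as unforgiving as in \cite{BaBoMo2010}.
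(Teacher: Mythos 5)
The statement you are addressing is posed in the paper as a \emph{conjecture}, not a theorem: the authors explicitly record that even the special case $k=2$ (the Balogh--Bollob\'as--Morris conjecture from 2010) remains open, and they offer no proof. Your proposal is accordingly not a proof but a research programme, and you candidly identify its fatal gap yourself: the entire argument hinges on first classifying the minimal families of projections that jointly span a larger projection under $r$-neighbour dynamics for $r\ge 3$, and no such classification is supplied. Without it, none of the subsequent machinery can even be formulated --- there is no analogue of \Cref{lem:projections_span_projection}, hence no witnessing-quadruple decomposition, no recursion for $\probcubeintspan{\cdot}$, and no notion of a sequentially spanning sequence whose extension counts could feed a second-moment computation. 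Note also that the difficulty is not merely one of bookkeeping: for $r\ge 3$ two projections at distance two no longer interact at all (a common neighbour acquires only two infected neighbours), so the growth mechanism is qualitatively different, and the ``dimension roughly squares at each merge'' heuristic is asserted rather than derived; the exponent $1/2^{r-1}$ in the conjecture is extrapolated from \cite{BaBoMo2010}, not obtained from any recursion you actually solve.

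Your observation that for $k\ge r+1$ a $K_k$-fibre becomes fully infected once it contains $r$ infected vertices is correct and is a genuine feature of the Hamming setting absent from the hypercube, but it only tells you how infection saturates \emph{within} a fibre; it does not address how lower-dimensional internally spanned projections combine to produce higher-dimensional ones, which is where the conjecture lives. As written, the proposal should be understood as a (reasonable) outline of why the conjecture is plausible and where the obstruction lies, matching the paper's own framing of the problem as open; it does not constitute a proof, nor does it make progress on the missing structural step.
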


\section*{Acknowledgements}
The authors are supported by the Austrian Science Fund (FWF) [10.55776/\{W1230, DOC183, F1002\}]. For the purpose of open access, the authors have applied a CC BY public copyright licence to any Author Accepted Manuscript version arising from this submission.

\printbibliography
\appendix

\section{}\label{sec:appendix_bootstrap_basics}

Here we prove auxiliary results in Section \ref {subsec:determ.bootstrap} concerning deterministic bootstrap percolation on the Hamming graph. Throughout this section, in order to have a clear distinction between vertices of base graphs and vertices of the product graph, we will write vertices of the product graph as column vectors, i.e., every vertex in $G= \square_{i=1}^n G^{(i)} $ is of the form $\vertexvecn{v} = \transpose{(v_1,\ldots,v_n)}$, where $v_i \in V^{(i)}$ for all $i \in [n]$. We need the following simple facts about $2$-neighbour bootstrap percolation on $G$, which are a direct consequence of \Cref{def:closure}.
\begin{lem}\label{lem:general_bootstrap_facts}
  For any $U,W \subseteq V$ we have
    \begin{enumerate}[label = (\arabic*)]
        \item $\closure{U} = \closure{\closure{U}}$;
        \item if $U \subseteq W$, then $\closure{U} \subseteq \closure{W}$;
        \item if $W \subseteq W_1 \cup W_2$ with $W_i \subseteq W$ for $i \in \{1,2\}$, then $\closure{U \cup W} = \closure{\closure{U \cup W_1} \cup W_2}= \closure{\closure{U \cup W_2} \cup W_1}$. \label{lem:general_bootstrap_facts:third_point}
    \end{enumerate}
\end{lem}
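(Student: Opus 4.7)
All three statements follow from the inductive definition of $[\cdot]$ via the sequence $A_j$ in \Cref{def:closure}, and the plan is to unpack that definition carefully.

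For (1), I would first observe that $[U]$ is closed by construction: if some $v \in V \setminus [U]$ had at least two neighbours in $[U]$, then since $[U] = \bigcup_j U_j$ and $V$ is finite, these two neighbours would already lie in some $U_{j-1}$, forcing $v \in U_j \subseteq [U]$, a contradiction. Hence, applying the process with $[U]$ as the initial set yields $([U])_0 = [U]$ and $([U])_{j} = ([U])_{j-1}$ for all $j \geq 1$, which gives $[[U]] = [U]$.

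For (2), I would show by induction on $j \geq 0$ that $U_j \subseteq W_j$. The base case $j=0$ is the assumption $U \subseteq W$. For the inductive step, note that any vertex $v \in U_j \setminus U_{j-1}$ satisfies $|N(v) \cap U_{j-1}| \geq 2$, and since $U_{j-1} \subseteq W_{j-1}$ by induction, we also have $|N(v) \cap W_{j-1}| \geq 2$, so $v \in W_j$. Together with $U_{j-1} \subseteq W_{j-1} \subseteq W_j$, this gives $U_j \subseteq W_j$. Taking the union over $j$ yields $[U] \subseteq [W]$.

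For (3), by symmetry it suffices to establish $[U \cup W] = [[U \cup W_1] \cup W_2]$. For the inclusion $\subseteq$, I would use $W \subseteq W_1 \cup W_2$ and $W_1 \subseteq [U \cup W_1]$ to conclude $U \cup W \subseteq [U \cup W_1] \cup W_2$, and apply (2). For the reverse inclusion, $U \cup W_1 \subseteq U \cup W$ implies $[U \cup W_1] \subseteq [U \cup W]$ by (2), and $W_2 \subseteq W \subseteq [U \cup W]$, so $[U \cup W_1] \cup W_2 \subseteq [U \cup W]$; applying $[\cdot]$ and invoking (1) closes the argument. The main thing to be careful about is tracking the two uses of monotonicity and the single use of idempotency on each side, rather than any deep obstacle—the statements are really just a careful bookkeeping of the fact that the closure operator is monotone and idempotent.
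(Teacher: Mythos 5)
Your proposal is correct; the paper itself gives no proof of this lemma, stating only that the three facts are a direct consequence of \Cref{def:closure}, and your argument is precisely the standard verification one would write out (closure is idempotent because $\closure{U}$ is closed by finiteness of the step at which any two neighbours appear, monotone by induction on the rounds $A_j$, and (3) follows from two applications of monotonicity plus one of idempotency on each side).
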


We continue by showing that \projections{} of $G$ are closed.
\begin{proof}[Proof of Lemma \ref{lem:projclosed}]
   Let $\subcube{}$ be an arbitrary projection of $G$. Assume that $\subcube{}$ is not closed, i.e., $\closure{\subcube{}} \setminus \subcube{} \neq \emptyset$. Then there exists a vertex $\vertexvecn{v} = \transpose{(v_1,\ldots,v_n)} \notin \subcube{}$ which has at least two neighbours in $\subcube{}$. Pick any two of those neighbours and denote them by $\vertexvecn{w}_{1}= \transpose{(w_{1,1},\ldots,w_{1,n})}$ and $\vertexvecn{w}_{2}=\transpose{(w_{2,1},\ldots,w_{2,n})}$. As $\vertexvecn{v} \notin \subcube{}$ and $\{\vertexvecn{w}_{1},\vertexvecn{w}_{2} \} \subseteq \subcube{}$, there exists a coordinate $i \in [n]$ such that $v_i \neq w_{1,i} = w_{2,i}$. Hence it follows from $\dist{\vertexvecn{v}}{\vertexvecn{w}_{1}} = \dist{\vertexvecn{v}}{\vertexvecn{w}_{2}} = 1$ that $ w_{1,j} = v_j = w_{2,j}$ holds for all $j \in [n]\setminus i$. Thus, $w_{1,j} = w_{2,j}$ for all $j \in [n]$, implying $\vertexvecn{w}_{1} = \vertexvecn{w}_{2}$ and thus that $\vertexvecn{v}$ has only a single neighbour in $\subcube{}$ which is a contradiction.  
\end{proof}

To prove \Cref{lem:projections_span_projection} we first need to understand the simplest case, where one of the \projections{} is a single vertex. 
\begin{lem}\label{lem:proj_and_vert_span_proj}
    Let $\subcube{}$ be a \projection{} of $G$ and $\vertexvecn{v} \in V(G)$ be such that  $\dist{\vertexvecn{v}}{\subcube{}} \leq 2$. Then, $\closure{\subcube{} \cup \vertexvecn{v}}$ is a \projection{} of $G$. 
\end{lem}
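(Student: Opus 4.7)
The plan is to identify the putative projection $H^* = \closure{\subcube{} \cup \vertexvecn{v}}$ explicitly and then establish the equality by separate containments. Write $\subcube{} = \square_{i=1}^n \subcube{}^{(i)}$, let $I \subseteq [n]$ be the set of coordinates at which $\subcube{}^{(i)} = K_k$, and let $h_i$ be the fixed value of $\subcube{}$ at $i \notin I$. Then $d := \dist{\vertexvecn{v}}{\subcube{}}$ is precisely the size of the set $J \subseteq [n] \setminus I$ of coordinates $i$ at which $v_i \neq h_i$. Set $H^*$ to be the projection that is complete at every coordinate in $I \cup J$ and takes the single value $h_i = v_i$ at each $i \notin I \cup J$. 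Then $H^*$ has dimension $\dimension{\subcube{}} + d$ and clearly contains both $\subcube{}$ and $\vertexvecn{v}$.

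The first containment $\closure{\subcube{} \cup \vertexvecn{v}} \subseteq H^*$ is immediate from $\subcube{} \cup \{\vertexvecn{v}\} \subseteq H^*$ together with the fact that $H^*$ is closed by \Cref{lem:projclosed}. The heart of the proof is the reverse containment $H^* \subseteq \closure{\subcube{} \cup \vertexvecn{v}}$, which I would handle by a slicing argument parameterized by the coordinates in $J$. The case $d=0$ is vacuous since $\vertexvecn{v} \in \subcube{}$ and the case $d=1$ or $d=2$ will be argued directly.

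For $d=1$, write $J = \{j\}$. The slice of $H^*$ at $u_j = h_j$ is exactly $\subcube{}$ (already infected), and every vertex of the slice at $u_j = v_j$ has a coordinate-$j$-neighbor sitting inside $\subcube{}$. Thus, inside the $v_j$-slice the percolation process reduces to $1$-neighbor bootstrap starting from the single seed $\vertexvecn{v}$, and since this slice is isomorphic to the connected graph $\square_{i \in I} K_k$ it is fully infected. Finally, any vertex of $H^*$ whose $j$-coordinate lies outside $\{h_j,v_j\}$ has two coordinate-$j$-neighbors in already-infected slices, and is therefore also infected. The case $d=2$ is analogous but uses $k^2$ slices indexed by the pair of $J$-coordinates: the corner slices $(h_{j_1},h_{j_2})$ and $(v_{j_1},v_{j_2})$ get a first seed each (from $\subcube{}$ and $\vertexvecn{v}$), the two "mixed" slices pick up an initial seed via a single vertex that has both an $\subcube{}$-neighbor and a $\vertexvecn{v}$-neighbor, and then the same slicewise $1$-neighbor argument fills each of these four slices; all remaining slices follow by the same coordinate-$j_1$/$j_2$ neighbor argument as in the $d=1$ case.

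The only real technical point is the bookkeeping of these slice-by-slice infections for general $k$; the conceptual shortcut that tames it is the repeated observation that once one slice parallel to a given direction is fully infected, $2$-neighbor bootstrap on an adjacent slice effectively becomes $1$-neighbor bootstrap (because every vertex automatically has one free infected neighbor across the distinguished coordinate), and this trivially spans any connected slice. I expect that once this reduction is spelled out cleanly for $d=1$ it will transfer directly to $d=2$, so the main obstacle is notational rather than conceptual.
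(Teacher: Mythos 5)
Your proposal is correct, and it takes a genuinely different route from the paper's proof. The paper argues coordinate by coordinate: for $d=1$ it builds a nested chain of sub-projections $W_1\subseteq W_2\subseteq\cdots\subseteq W_n$ inside the target, each $W_m$ adding one further coordinate direction of $\subcube{}$, and infects each new vertex via one neighbour in $W_{m-1}$ and one in $\subcube{}$; for $d=2$ it does not argue directly but reduces to two applications of the $d=1$ case through an intermediate vertex $\vertexvecn{v}'$ with $\dist{\vertexvecn{v}'}{\vertexvecn{v}}=\dist{\vertexvecn{v}'}{\subcube{}}=1$. You instead identify the target projection $H^*$ explicitly at the outset, get $\closure{\subcube{}\cup\vertexvecn{v}}\subseteq H^*$ from \Cref{lem:projclosed}, and fill $H^*$ slice by slice transverse to the new coordinate(s), using the observation that $2$-neighbour bootstrap on a slice adjacent to a fully infected parallel slice degenerates to $1$-neighbour bootstrap, which spans any connected slice from a single seed, with the remaining slices sandwiched between two infected ones. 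Your route buys the explicit description of the spanned projection and its dimension $\dimension{\subcube{}}+d$ for free, and the slice-reduction principle is a clean, reusable lemma in its own right; the cost is the heavier $k^2$-slice bookkeeping in the $d=2$ case, which you could avoid entirely by borrowing the paper's trick of reducing $d=2$ to two invocations of $d\le 1$ via an intermediate vertex. Both arguments are elementary and sound; the only step you should spell out when writing the $d=2$ case in full is the order in which the non-corner slices are filled (first all slices obtained by varying the $j_1$-coordinate of the four corner slices, then the rest by varying $j_2$), but this is exactly the notational issue you already flagged.
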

\begin{proof}
Let $\vertexvecn{v} = \transpose{(v_1,\ldots,v_n)}$ and let $U = \square_{i=1}^n U^{(i)}$ be a \projection{} of $G$. Note that the case $\dist{\vertexvecn{v}}{U} =0$ is trivial, because if so, then $\vertexvecn{v} \in U$, which implies that $\closure{U \cup \vertexvecn{v}} = U$.

We continue with the case $\dist{\vertexvecn{v}}{U} =1$ and assume w.l.o.g. that $v_1 \notin U^{(1)}$ and $v_i \in U^{(i)}$ for $i \in [n]\setminus \{1\}$. Let us define for all $j \in [n]$,
\begin{align*}
   W_j := \square_{i=1}^n W_j^{(i)}, 
\end{align*} 
with $W_j^{(1)} := K_k $, $W_j^{(i)} := U^{(i)}$ for $2 \leq i \leq j $ and $ W_j^{(i)} := \{v_i\}$ for $i >j$. We will show by induction that for all $j \in [n]$
\begin{align}\label{eq:lem:proj_and_vert_span_proj_inductionhyp}
    W_j \subseteq \closure{U \cup \vertexvecn{v}}.
\end{align} 
To show the induction base for $j=1$, let $\vertexvecn{w} := \transpose{(w_1,v_2,v_3,\ldots,v_n)}$ be an arbitrary vertex in $W_1$. It clearly holds that $\dist{\vertexvecn{w}}{\vertexvecn{v}} = \dist{\vertexvecn{w}}{U} = 1$, which implies $\vertexvecn{w} \in \closure{U \cup \vertexvecn{v}}$ and thus $W_1 \subseteq \closure{U \cup \vertexvecn{v}}$. Proceeding with the induction hypothesis, suppose \eqref{eq:lem:proj_and_vert_span_proj_inductionhyp} holds for all $j \in [m-1]$ for some $m \in [n]$ and let $\vertexvecn{w} = \transpose{(w_1,\ldots,w_{m},v_{m+1},\ldots,v_n)} \in W_{m}$. Assume w.l.o.g. that $w_{m} \neq v_{m}$, as otherwise $\vertexvecn{w} \in W_{m-1} \subseteq \closure{U \cup \vertexvecn{v}}$. Let us consider the two vertices 
$$\vertexvecn{w}' = \transpose{(w_1,\ldots,w_{m-1},v_{m},v_{m+1},\ldots,v_n)}\ \text{ and } \ \vertexvecn{w}'' = \transpose{(v_1,w_2,w_3,\ldots,w_{m},v_{m+1},\ldots,v_n)}.$$ By construction it holds that $\vertexvecn{w}' \in W_{m-1}$ and $\vertexvecn{w}'' \in U$ and thus it follows from the induction hypothesis that 
$\{\vertexvecn{w}',\vertexvecn{w}''\} \subseteq \closure{U \cup \vertexvecn{v}}$. Furthermore, it holds that $\dist{\vertexvecn{w}}{\vertexvecn{w}'} = \dist{\vertexvecn{w}}{\vertexvecn{w}''}=1$, implying $\vertexvecn{w} \in \closure{\vertexvecn{w}' \cup \vertexvecn{w}''} \subseteq \closure{U \cup \vertexvecn{v}}$ and thus $W_m \subseteq  \closure{U \cup \vertexvecn{v}}$.

It follows that $W_n  \subseteq \closure{U \cup \vertexvecn{v}}$. Now note that 
$W_n = K_k \square \left( \square_{i=2}^n U^{(i)}\right)$ is a \projection{} of $G$ that contains both $U$ and $\vertexvecn{v}.$ Using \Cref{lem:projclosed} we obtain 
\begin{align*}
    \closure{U \cup \vertexvecn{v}} \subseteq \closure{W_n} = W_n,
\end{align*} as required. 

We move on to the case $\dist{\vertexvecn{v}}{U} = 2$. It follows that there exists a vertex $\vertexvecn{v}'$ satisfying $\dist{\vertexvecn{v}'}{\vertexvecn{v}} = \dist{\vertexvecn{v}'}{U} = 1$. It thus clearly holds that $\vertexvecn{v}' \in \closure{U \cup \vertexvecn{v}}$. By the previous case $\closure{U \cup \vertexvecn{v}'}$ is a \projection{} $\subcube{}$ of $G$. Furthermore, as $\dist{\vertexvecn{v}}{\subcube{}} \leq \dist{\vertexvecn{v}}{\vertexvecn{v}'} = 1$, it also follows from the previous case that $\closure{ \subcube{} \cup \vertexvecn{v} }$ is a \projection{} $\subcube{}'$ of $G$. Thus, using \crefitemintheorem{lem:general_bootstrap_facts}{lem:general_bootstrap_facts:third_point}, we obtain 
\begin{align*}
   \closure{U \cup \vertexvecn{v}} = \closure{U \cup \vertexvecn{v}' \cup \vertexvecn{v}} = \closure{\closure{U \cup \vertexvecn{v}'} \cup \vertexvecn{v}} = \closure{\subcube{} \cup \vertexvecn{v}} = \subcube{}',
\end{align*} completing the proof. 
\end{proof}

Having understood how a single vertex interacts with a \projection{}, we are ready to prove \Cref{lem:projections_span_projection}.
\begin{proof}[Proof of \Cref{lem:projections_span_projection}]
    Let $\subcube{}',\subcube{}''$ be two \projections{} of $G$ satisfying $d \coloneqq \dist{\subcube{}'}{\subcube{}''} \leq 2$. Let us set $m \coloneqq |\subcube{}''| $. 
    Clearly, there exists a vertex $\vertexvecn{v} \in \subcube{}''$ satisfying $\dist{\subcube{}'}{\vertexvecn{v}} \leq 2$. Furthermore, as \projections{} of $G$ are connected, we can find an order $(\vertexvecn{v}_1, \ldots ,\vertexvecn{v}_{m})$ of the vertices of $\subcube{}''$ such that $\vertexvecn{v}_1 = \vertexvecn{v}$ and $\dist{\{\vertexvecn{v}_1, \ldots ,\vertexvecn{v}_i\}}{\vertexvecn{v}_{i+1}} \leq 1$, for all $i \in [m-1]$. 
    
    Let us define $U_0$ := $\subcube{}'$ and for each $i \in [m]$,
    \begin{align*}
        U_i := \closure{U_{i-1} \cup \vertexvecn{v}_i}.
    \end{align*} We will show by induction that for all $i \in [m]$, $U_i$ is a \projection{} and that 
    \begin{align}\label{eq:lem:projections_span_projection:inductionhyp}
        \closure{\subcube{}' \cup \subcube{}''} = \closure{U_i \cup \subcube{}''}.
    \end{align}
    By \Cref{lem:proj_and_vert_span_proj}, $U_1 \coloneqq \closure{\subcube{}'\cup \vertexvecn{v}_1}$ is a \projection{}. Furthermore, by \crefitemintheorem{lem:general_bootstrap_facts}{lem:general_bootstrap_facts:third_point} it holds that $$\closure{\subcube{}' \cup \subcube{}''} = \closure{\closure{U_0 \cup \vertexvecn{v}_1}\cup \subcube{}''} = \closure{U_1 \cup \subcube{}''},$$ proving the induction base. 
    
    Proceeding with the induction hypothesis, assume now that \eqref{eq:lem:projections_span_projection:inductionhyp} holds and that $U_j$ is a \projection{}for all $j \in [i-1]$ for some $i \in [m]$. As $\dist{\{\vertexvecn{v}_1, \ldots ,\vertexvecn{v}_{i-1}\}}{\vertexvecn{v}_{i}} \leq 1$, it follows that also $\dist{U_{i-1}}{\vertexvecn{v}_{i}} \leq 1$ and thus by \Cref{lem:proj_and_vert_span_proj} that $U_{i} \coloneqq \closure{U_{i-1} \cup \vertexvecn{v}_i}$ is a \projection{}. Furthermore, using \eqref{eq:lem:projections_span_projection:inductionhyp} and \Cref{lem:general_bootstrap_facts} it follows that 
    $$\closure{\subcube{}' \cup \subcube{}''} = \closure{U_{i-1} \cup \subcube{}''} = \closure{\closure{U_{i-1} \cup \vertexvecn{v}_{i}}\cup \subcube{}''} = \closure{U_{i} \cup \subcube{}''},$$ completing the induction. Hence, using $\subcube{}'' \subseteq U_{m}$ and \eqref{eq:lem:projections_span_projection:inductionhyp} for $i = m$, there exists a \projection{} $\subcube{}$ of $G$, such that $\closure{\subcube{}' \cup \subcube{}''} = \closure{ U_{m} \cup \subcube{}''} =\closure{U_m} = U_{m} = \subcube{}$. 
    
    It remains to prove the claimed upper bound on $\dimension{\subcube{}}$. To that end, setting $d \coloneqq \dist{\subcube{}'}{\subcube{}''}$, we note that the graph $\subcube{}' \cup \subcube{}''$ can have at most $\dimension{\subcube{}'}+\dimension{\subcube{}''}+d$ non-trivial coordinates. Thus, $\subcube{}' \cup \subcube{}''$ is contained in a \projection{} $W$ satisfying $\dimension{W} \leq \dimension{\subcube{}'}+\dimension{\subcube{}''}+d$. However, this also implies that $\closure{\subcube{}' \cup \subcube{}'' } \subseteq \closure{W} = W$ and thus finally,
    $\dimension{\closure{\subcube{}' \cup \subcube{}'' }} \leq \dimension{\subcube{}'}+\dimension{\subcube{}''}+d$.
\end{proof}

\begin{proof}[Proof of Lemma \ref{lem:seqspanningspansprojection}]
   Set  $ u \coloneqq |U| -1$ and let $(\vertexvecn{v}_0,\ldots, \vertexvecn{v}_{u})$ be an ordering of the vertices of $U$ such that Property \eqref{Prop:sequentiallyspanning} is satisfied. The statement clearly holds in the case $u=2$, so let $u >2$ and assume by induction that the statement holds for all sets $U'$ satisfying $|U'| <|U|=u+1$. The set $U \setminus \{\vertexvecn{v}_u\}$ clearly also satisfies Property \eqref{Prop:sequentiallyspanning} and thus $\closure{U\setminus \{\vertexvecn{v}_u}\}$ spans a \projection{} $\subcube{}$ of dimension $2(u-1)$. It follows from $\dist{\subcube}{\vertexvecn{v}_u} =2$ that $\subcube{} \cup \vertexvecn{v}_u$ has to consist of at least $2u$ non-trivial coordinates. Using \Cref{lem:projections_span_projection} we know that $\subcube{} \cup \vertexvecn{v}_u$ has to span some \projection{} $\subcube{}'$. However, by the previous argument it follows that  $\dimension{\subcube{}'} \geq 2u$ and on the other hand, again by \Cref{lem:projections_span_projection} we know that $\dimension{\subcube{}'} \leq \dimension{\subcube{}} +\dimension{\vertexvecn{v}_u}+\dist{\subcube{}}{\vertexvecn{v}_u} = 2(u-1)+0+2 = 2u$, completing the proof. 
\end{proof}

\section{}\label{sec:appendix_bounds}
Here we give all the details of the proof of \Cref{lem:bound:auxfunc}.
\begin{proof}[Proof of \crefitemintheorem{lem:bound:auxfunc}{item:lem:bound:auxfunc:a}]
Observe first that by  construction,  for all $1 \leq j \leq \rounddown{\frac{n}{2}}-1$,  the functions $\constinlemma{j}$ satisfy
\begin{align}
    \frac{\constinlemma{j+1}}{\constinlemma{j}}\geq 3. \label{eq:ratio_constinlemma}
\end{align}
By case distinction, we will prove that  $\auxfunc{\cdot}{i}{s}$ is decreasing in the first coordinate: Let $(m,i,s) \in \N^3$. 
\begin{itemize}
    \item[\textbf{Case 1:}] $2(m-i+2) < s$. \\
    Plugging in the corresponding values for $\auxfunc{\cdot}{\cdot}{\cdot}$ we obtain together with \eqref{eq:ratio_constinlemma} that  
    \begin{align*}
    \frac{\auxfunc{m+1}{i}{s}}{\auxfunc{m}{i}{s}} &= \frac{ \prod_{j = m+2}^{2m-i+3} \left(\frac{2m-i+4-j}{\constinlemma{j}} \right)^2\prod_{j = 2m-i+4}^{s+i-1} \constinlemma{j}^{-1}}{ \prod_{j = m+1}^{2m-i+1} \left(\frac{2m-i+2-j}{\constinlemma{j}} \right)^2 \prod_{j = 2m-i+2}^{s+i-1} \constinlemma{j}^{-1}} \\
    &= \frac{ \prod_{j = m+2}^{2m-i+3} \left(2m-i+4-j \right)^2}{\prod_{j = m+1}^{2m-i+1} \left(2m-i+2-j \right)^2} \frac{\prod_{j = m+1}^{2m-i+1} \constinlemma{j}^2 \prod_{j = 2m-i+2}^{s+i-1} \constinlemma{j} }{\prod_{j = m+2}^{2m-i+3} \constinlemma{j}^2 \prod_{j = 2m-i+4}^{s+i-1} \constinlemma{j} } \\
    &= \frac{(m-i+2)^2\constinlemma{m+1}^2}{\constinlemma{2m-i+2}\constinlemma{2m-i+3}} 
    \\ \overset{\eqref{eq:ratio_constinlemma}}&{\leq}\frac{(m-i+2)^2}{3^{2m-2i+3}} \leq 1. 
\end{align*}
\item[\textbf{Case 2:}] $2(m-i+2)=  s.$ \\
In this case,  $\rounddown{\frac{s}{2}} = (m-i+2)$ and $s$ is even, yielding
\begin{align*}
    \frac{\auxfunc{m+1}{i}{s}}{\auxfunc{m}{i}{s}} &= \frac{\prod_{j = m+2}^{m+1+\rounddown{\frac{s}{2}} }\left(\frac{2m-i+4-j}{\constinlemma{j}} \right)^2 }{  \prod_{j = m+1}^{2m-i+1} \left(\frac{2m-i+2-j}{\constinlemma{j}} \right)^2 \prod_{j = 2m-i+2}^{s+i-1} \constinlemma{j}^{-1}} \\
    &= \frac{\prod_{j = m+2}^{2m-i+3}\left(\frac{2m-i+4-j}{\constinlemma{j}} \right)^2 }{  \prod_{j = m+1}^{2m-i+1} \left(\frac{2m-i+2-j}{\constinlemma{j}} \right)^2 \prod_{j = 2m-i+2}^{2m-i+3} \constinlemma{j}^{-1}}\\
    &= \frac{(m-i+2)^2\constinlemma{m+1}^2}{\constinlemma{2m-i+2}\constinlemma{2m-i+3}} \\ \overset{\eqref{eq:ratio_constinlemma}}&{\leq} \frac{(m-i+2)^2}{3^{2m-2i+3}}  \leq 1.
\end{align*}
\item[\textbf{Case 3:}] $2(m-i+2) -1=  s.$ \\
Note that in this case $\rounddown{\frac{s}{2}} = (m-i+1)$  and furthermore $s$ is odd. Thus, using  \eqref{eq:ratio_constinlemma} we obtain
\begin{align*}
    \frac{\auxfunc{m+1}{i}{s}}{\auxfunc{m}{i}{s}} 
    &= \frac{\prod_{j = m+2}^{m+1+\rounddown{\frac{s}{2}} }\left(\frac{2m-i+4-j}{\constinlemma{j}} \right)^2 \left(\frac{2(m-\rounddown{\frac{s}{2}}-i+2)+1}{\constinlemma{m +\rounddown{\frac{s}{2}}+2}}\right)}{  \prod_{j = m+1}^{2m-i+1} \left(\frac{2m-i+2-j}{\constinlemma{j}} \right)^2 \prod_{j = 2m-i+2}^{s+i-1} \constinlemma{j}^{-1}} \\ &= \frac{\prod_{j = m+2}^{2m-i+2}\left(\frac{2m-i+4-j}{\constinlemma{j}} \right)^2 \left(\frac{3}{\constinlemma{2m -i+3}}\right)}{  \prod_{j = m+1}^{2m-i+1} \left(\frac{2m-i+2-j}{\constinlemma{j}} \right)^2 \prod_{j = 2m-i+2}^{2m-i+2} \constinlemma{j}^{-1}}\\
    &= \frac{3(m-i+2)^2\constinlemma{m+1}^2}{\constinlemma{2m-i+2}\constinlemma{2m-i+3}} \\ \overset{\eqref{eq:ratio_constinlemma}}&{\leq} \frac{(m-i+2)^2}{3^{2m-2i+2}} \leq 1.
\end{align*}
\item[\textbf{Case 4:}] $2(m-i+1) \geq s$. \\
In this case we obtain
\begin{align*}
    \frac{\auxfunc{m+1}{i}{s}}{\auxfunc{m}{i}{s}} &\leq \frac{\left( \prod_{j = m+2}^{m+1+\rounddown{\frac{s}{2}}} \left(\frac{2m-i+4-j}{\constinlemma{j}} \right)^2\right) \left(\indicator{s \text{ even}}+ \indicator{s \text{ odd}} \frac{2(m-\rounddown{\frac{s}{2}}-i+2)+1}{\constinlemma{m +\rounddown{\frac{s}{2}}+2}}\right)}{\left( \prod_{j = m+1}^{m+\rounddown{\frac{s}{2}}} \left(\frac{2m-i+2-j}{\constinlemma{j}} \right)^2\right)\left(\indicator{s \text{ even}}+ \indicator{s \text{ odd}} \frac{2(m-\rounddown{\frac{s}{2}}-i+1)+1}{\constinlemma{m +\rounddown{\frac{s}{2}}+1}}\right) } \\
    &= \frac{(m-i+2)^2 \constinlemma{m+1}^2 }{\left(m-i+2-\rounddown{\frac{s}{2}}\right)^2 \constinlemma{m+1+\rounddown{\frac{s}{2}}}^2  } \frac{\left(\indicator{s \text{ even}}+ \indicator{s \text{ odd}} \frac{2(m-\rounddown{\frac{s}{2}}-i+2)+1}{\constinlemma{m +\rounddown{\frac{s}{2}}+2}}\right)}{\left(\indicator{s \text{ even}}+ \indicator{s \text{ odd}} \frac{2(m-\rounddown{\frac{s}{2}}-i+1)+1}{\constinlemma{m +\rounddown{\frac{s}{2}}+1}}\right)}\\
    \overset{\eqref{eq:ratio_constinlemma}}&{\leq} \frac{\left(\rounddown{\frac{s}{2}}+1\right)^2}{9^{\rounddown{\frac{s}{2}}}} \leq 1,
\end{align*} 
\end{itemize} 
concluding the case distinction and thus showing that $\auxfunc{\cdot}{i}{s}$ is increasing in the first coordinate. 
\end{proof}

\begin{proof}[Proof of \crefitemintheorem{lem:bound:auxfunc}{item:lem:bound:auxfunc:b}]
Let $m \geq i-1 $ and $s\geq 1$. We will split the sum into two parts and show that 
\begin{align}
    \sum_{j= m+1}^{\ell}  \frac{2(j+1-i) \auxfunc{j}{i+1}{s-1}}{\constinlemma{j}}  &\leq 6 \auxfunc{m}{i}{s} \quad \text{and} \label{eq1:lem:bound:auxfunc} \\
     \sum_{j= m+1}^{\ell}   \frac{(j-i)^2  \auxfunc{j}{i+2}{s-2}}{8{\constinlemma{j}}^2}\indicator{s\geq 2} &\leq 2 \auxfunc{m}{i}{s}.\label{eq2:lem:bound:auxfunc}
\end{align}
In order to show \eqref{eq1:lem:bound:auxfunc}, let  us consider ratios of consecutive terms. 
 It follows from \eqref{eq:ratio_constinlemma} and \crefitemintheorem{lem:bound:auxfunc}{item:lem:bound:auxfunc:a} for all $j \geq i$ that
\begin{align*}
    \frac{2(j+2-i) \auxfunc{j+1}{i+1}{s-1}\constinlemma{j}}{2(j+1-i) \auxfunc{j}{i+1}{s-1}\constinlemma{j+1}} \leq \frac{j+2-i}{3(j+1-i)} \leq \frac{2}{3}, 
\end{align*} implying, together with the fact that $m+1 \geq i$, that
\begin{align*}
    \sum_{\specialindex= m+1}^{\ell}  \frac{2(\specialindex+1-i) \auxfunc{\specialindex}{i+1}{s-1}}{\constinlemma{\specialindex}} &\leq \frac{2(m-i+2)\auxfunc{m+1}{i+1}{s-1}}{\constinlemma{m+1}} \sum_{\specialindex= 0}^{\ell-m-1} \left(\frac{2}{3}\right)^j \\
    &\leq \frac{6(m-i+2)\auxfunc{m+1}{i+1}{s-1}}{\constinlemma{m+1}}. 
\end{align*} 
Thus, in order to verify \eqref{eq1:lem:bound:auxfunc} it suffices to show that
\begin{align*}
     \frac{\auxfunc{m+1}{i+1}{s-1}}{\auxfunc{m}{i}{s}} \leq  \frac{\constinlemma{m+1}}{(m-i+2)}.
\end{align*}

We proceed again with a case distinction. 
\begin{itemize}
    \item[\textbf{Case 1:}] $2(m-i+1) < s-1$. \\
    It follows from the definition of $\Psi$ and \eqref{eq:ratio_constinlemma} that 
    \begin{align*}
        \frac{\auxfunc{m+1}{i+1}{s-1}}{\auxfunc{m}{i}{s}} = \frac{\prod_{j = m+2}^{2m-i+2} \left(\frac{2m-i+3-j}{\constinlemma{j}} \right)^2\prod_{j = 2m-i+3}^{s+i-1} \constinlemma{j}^{-1}}{\prod_{j = m+1}^{2m-i+1} \left(\frac{2m-i+2-j}{\constinlemma{j}} \right)^2\prod_{j = 2m-i+2}^{s+i-1} \constinlemma{j}^{-1}} = \frac{ \constinlemma{m+1}^2}{\constinlemma{2m-i+2}} \overset{\eqref{eq:ratio_constinlemma}}{\leq} \frac{\constinlemma{m+1}}{(m-i+2)}. 
    \end{align*}
    \item[\textbf{Case 2:}] $2(m-i+1) = s-1$. \\
    In this case $s-1$ is even and we thus obtain
    \begin{align*}
         \frac{\auxfunc{m+1}{i+1}{s-1}}{\auxfunc{m}{i}{s}} &= \frac{\prod_{j = m+2}^{m+1+\rounddown{\frac{s-1}{2}}} \left(\frac{2m-i+3-j}{\constinlemma{j}} \right)^2  }{\prod_{j = m+1}^{2m-i+1} \left(\frac{2m-i+2-j}{\constinlemma{j}} \right)^2\prod_{j = 2m-i+2}^{s+i-1} \constinlemma{j}^{-1}} \\ &= \frac{\prod_{j = m+2}^{2m-i+2} \left(\frac{2m-i+3-j}{\constinlemma{j}} \right)^2  }{\prod_{j = m+1}^{2m-i+1} \left(\frac{2m-i+2-j}{\constinlemma{j}} \right)^2\prod_{j = 2m-i+2}^{2m-i+2} \constinlemma{j}^{-1}} \\
         &= \frac{ \constinlemma{m+1}^2}{\constinlemma{2m-i+2}} \overset{\eqref{eq:ratio_constinlemma}}{\leq} \frac{\constinlemma{m+1}}{(m-i+2)}. 
    \end{align*}

    \item[\textbf{Case 3:}] $2(m-i+1) > s-1$ and $s$ is even. \\
    In this case we obtain
    \begin{align*}
         \frac{\auxfunc{m+1}{i+1}{s-1}}{\auxfunc{m}{i}{s}} &= \frac{\prod_{j = m+2}^{m+1+\rounddown{\frac{s-1}{2}}} \left(\frac{2m-i+3-j}{\constinlemma{j}} \right)^2 \left(\indicator{s-1 \text{ even}}+ \indicator{s-1 \text{ odd}} \frac{2(m-\rounddown{\frac{s-1}{2}}-i+1)+1}{\constinlemma{m +1+\rounddown{\frac{s-1}{2}}+1}}\right)}{\prod_{j = m+1}^{m+\rounddown{\frac{s}{2}}} \left(\frac{2m-i+2-j}{\constinlemma{j}} \right)^2 \left(\indicator{s \text{ even}}+ \indicator{s \text{ odd}} \frac{2(m-\rounddown{\frac{s}{2}}-i+1)+1}{\constinlemma{m +\rounddown{\frac{s}{2}}+1}}\right)} \\
         &= \frac{\prod_{j = m+2}^{m+\frac{s}{2}} \left(\frac{2m-i+3-j}{\constinlemma{j}} \right)^2 \left(\frac{2(m-\frac{s}{2}-i)+1}{\constinlemma{m +\frac{s}{2}+1}}\right)}{\prod_{j = m+1}^{m+\frac{s}{2}} \left(\frac{2m-i+2-j}{\constinlemma{j}} \right)^2 } \\
         &= \frac{\left(2(m-i-\frac{s}{2})+1 \right)\constinlemma{m+1}^2}{\left(m-i+2-\frac{s}{2}\right)^2\constinlemma{m+\frac{s}{2}+1}}\overset{\eqref{eq:ratio_constinlemma}}{\leq} \frac{\constinlemma{m+1}}{(m-i+2)}.
    \end{align*}

    \item[\textbf{Case 4:}] $2(m-i+1) > s-1$ and $s$ is odd. \\
    In this case we obtain
    \begin{align*}
         \frac{\auxfunc{m+1}{i+1}{s-1}}{\auxfunc{m}{i}{s}} &= \frac{\prod_{j = m+2}^{m+1+\rounddown{\frac{s-1}{2}}} \left(\frac{2m-i+3-j}{\constinlemma{j}} \right)^2 \left(\indicator{s-1 \text{ even}}+ \indicator{s-1 \text{ odd}} \frac{2(m-\rounddown{\frac{s-1}{2}}-i+1)+1}{\constinlemma{m +1+\rounddown{\frac{s-1}{2}}+1}}\right)}{\prod_{j = m+1}^{m+\rounddown{\frac{s}{2}}} \left(\frac{2m-i+2-j}{\constinlemma{j}} \right)^2 \left(\indicator{s \text{ even}}+ \indicator{s \text{ odd}} \frac{2(m-\rounddown{\frac{s}{2}}-i+1)+1}{\constinlemma{m +\rounddown{\frac{s}{2}}+1}}\right)} \\
         &=\frac{\prod_{j = m+2}^{m+1+\frac{s-1}{2}} \left(\frac{2m-i+3-j}{\constinlemma{j}} \right)^2 }{\prod_{j = m+1}^{m+\frac{s-1}{2}} \left(\frac{2m-i+2-j}{\constinlemma{j}} \right)^2 \left( \frac{2(m-\frac{s-1}{2}-i+1)+1}{\constinlemma{m+ \frac{s-1}{2}+1}}\right)} \\ &= \frac{\constinlemma{m+1}^2}{\left(2(m-i+2)-s\right)\constinlemma{m+1+\frac{s-1}{2}}} \\ \overset{\eqref{eq:ratio_constinlemma}}&{\leq} \frac{\constinlemma{m+1}}{(m-i+2)}.
    \end{align*} 
\end{itemize}
This completes the proof of \eqref{eq1:lem:bound:auxfunc}.

We will show \eqref{eq2:lem:bound:auxfunc} in a similar manner.
Observe first that by \eqref{eq:ratio_constinlemma} and the fact that $\auxfunc{\cdot}{i}{s}$ is decreasing in the first coordinate, it follows  that for all $j \geq i+1$
\begin{align*}
    \frac{8(j+1-i)^2  \auxfunc{j+1}{i+2}{s-2}{\constinlemma{j}}^2}{8 (j-i)^2\auxfunc{j}{i+2}{s-2} {\constinlemma{j+1}}^2} \leq \frac{(j+1-i)^2}{9(j-1)^2} \leq \frac{4}{9} \leq \frac{1}{2},
\end{align*} implying
\begin{align*}
     &\sum_{j= m+1}^{\ell}   \frac{(j-i)^2  \auxfunc{j}{i+2}{s-2}}{8{\constinlemma{j}}^2} \\
     &\leq 2\left( \indicator{m+1>i}\frac{(m-i+1)^2 \auxfunc{m+1}{i+2}{s-2}}{8 \constinlemma{m+1}^2} + \indicator{m+1 =i}\frac{\auxfunc{m+2}{i+2}{s-2}}{8 \constinlemma{m+2}^2}\right).
\end{align*}
Now, if $m+1=i$, it immediately follows that 
\begin{align*}
    \frac{\auxfunc{m+2}{i+2}{s-2}}{\auxfunc{m}{i}{s}} = \frac{\prod_{j = m+3}^{2m-i+3} \left(\frac{2m-i+4-j}{\constinlemma{j}} \right)^2\prod_{j = 2m-i+4}^{s+i-1} \constinlemma{j}^{-1}}{\prod_{j = m+1}^{2m-i+1} \left(\frac{2m-i+2-j}{\constinlemma{j}} \right)^2\prod_{j = 2m-i+2}^{s+i-1} \constinlemma{j}^{-1}} = \frac{\prod_{j = m+3}^{s+i-1} \constinlemma{j}^{-1}}{\prod_{j = m+1}^{s+i-1} \constinlemma{j}^{-1}} = \constinlemma{m+1}\constinlemma{m+2} \leq 8 \constinlemma{m+2}^2. 
\end{align*}

If $m+1>i$,  we will again show by a case distinction that 
\begin{align*}
     \frac{\auxfunc{m+1}{i+2}{s-2}}{\auxfunc{m}{i}{s}} \leq \frac{ \constinlemma{m+1}^2}{(m-i+1)^2},
\end{align*} which then implies \eqref{eq2:lem:bound:auxfunc}.
First, observe that 
$2(m-i+1) < s$ is equivalent to $2((m+1)-(i+2)+1) < s-2$. 
\begin{itemize}
    \item[\textbf{Case 1:}] $2(m-i+1) < s$. \\
     It follows from the definition of $\Psi$ that
\begin{align*}
    \frac{\auxfunc{m+1}{i+2}{s-2}}{\auxfunc{m}{i}{s}} = \frac{ \prod_{j = m+2}^{2m-i+1} \left(\frac{2m-i+2-j}{\constinlemma{j}} \right)^2\prod_{j = 2m-i+2}^{s+i-1} \constinlemma{j}^{-1}}{ \prod_{j = m+1}^{2m-i+1} \left(\frac{2m-i+2-j}{\constinlemma{j}} \right)^2\prod_{j = 2m-i+2}^{s+i-1} \constinlemma{j}^{-1}} = \frac{\constinlemma{m+1}^2}{(m-i+1)^2}.
\end{align*}

\item[\textbf{Case 2:}] $2(m-i+1) \geq s$. \\
     Similarly to the previous case, we obtain
\begin{align*}
    \frac{\auxfunc{m+1}{i+2}{s-2}}{\auxfunc{m}{i}{s}} &= \frac{  \prod_{j = m+2}^{m+1+\rounddown{\frac{s-2}{2}}} \left(\frac{2m-i+2-j}{\constinlemma{j}} \right)^2 \left(\indicator{s -2 \text{ even}}+ \indicator{s-2 \text{ odd}} \frac{2(m-\rounddown{\frac{s-2}{2}}-i)+1}{\constinlemma{m +\rounddown{\frac{s-2}{2}}+2}}\right)}{   \prod_{j = m+1}^{m+\rounddown{\frac{s}{2}}} \left(\frac{2m-i+2-j}{\constinlemma{j}} \right)^2 \left(\indicator{s \text{ even}}+ \indicator{s \text{ odd}} \frac{2(m-\rounddown{\frac{s}{2}}-i+1)+1}{\constinlemma{m +\rounddown{\frac{s}{2}}+1}}\right)} \\
    &= \frac{\constinlemma{m+1}^2 }{(m-i+1)^2} \left(\indicator{s \text{ even}} +\indicator{s \text{ odd}}\frac{\left(2(m-\rounddown{\frac{s-2}{2}}-i)+1\right) \constinlemma{m +\rounddown{\frac{s}{2}}+1}}{\left(2(m-\rounddown{\frac{s}{2}}-i+1)+1\right)\constinlemma{m +\rounddown{\frac{s-2}{2}}+2}}\right) \\
    &\leq  \frac{\constinlemma{m+1}^2 }{(m-i+1)^2}.
\end{align*} 
\end{itemize}
This completes the proof of \eqref{eq2:lem:bound:auxfunc}.

It follows from \eqref{eq1:lem:bound:auxfunc} and \eqref{eq2:lem:bound:auxfunc} that
\begin{align*}
    \sum_{j= m+1}^{\ell}  \frac{2(j+1-i) \auxfunc{j}{i+1}{s-1}}{\constinlemma{j}} + \frac{(j-i)^2  \auxfunc{j}{i+2}{s-2}}{8{\constinlemma{j}}^2} \leq 8 \auxfunc{m}{i}{s},
\end{align*} as claimed.  
\end{proof}
\end{document}